\documentclass[letterpaper, reqno,11pt]{article}
\usepackage[margin=1.0in]{geometry}
\usepackage{lineno}

\usepackage{blindtext, subfiles}
\usepackage[shortlabels]{enumitem}

\usepackage{tikz}
\usetikzlibrary{patterns, arrows.meta}

\usepackage{packages/commonpackage}
\usepackage{packages/package_for_list_of_content}
\usepackage{packages/package_to_delete}
\usepackage{packages/macro}

\newcommand{\RR}{\mathbb{R}}
\newcommand{\CC}{\mathbb{C}}

\newcommand{\eps}{\varepsilon}
\newcommand{\vardbtilde}[1]{\tilde{\raisebox{0pt}[0.85\height]{$\tilde{#1}$}}}
\newcommand{\erdogan}{Erdo\smash{\u{g}}an }

\numberwithin{equation}{section}
\theoremstyle{plain}
\newtheorem{theorem}{Theorem}[section]
\newtheorem{proposition}[theorem]{Proposition}
\newtheorem{corollary}[theorem]{Corollary}
\newtheorem{lemma}[theorem]{Lemma}

\theoremstyle{remark}
\newtheorem{definition}{Definition}[section]
\newtheorem{remark}{Remark}[section]
\newtheorem{example}{Example}[section]

\newtheorem{conjecture}[theorem]{Conjecture}
\newtheorem{problem}{Problem}[section]

\title{On planar Nikodym-type sets}
\author{
\begingroup
\setlength{\tabcolsep}{10pt} 
\begin{tabular}{ccc}
Alan Chang & Mingfeng Chen & Shaoming Guo  \\ Minxing Shen & Tongou Yang & Joshua Zahl
\end{tabular}
\endgroup
}

\date{}

\begin{document}
\maketitle

\begin{abstract}
    Nikodym \cite{Nik27} constructed a measure-zero Borel set in the plane that contains a punctured line segment passing through each point in the unit square. Fenton \cite{Lep76} and Falconer \cite{Fal85} asked whether a similar construction is possible when ``lines through a point'' are replaced by ``circles or circular arcs with a prescribed center.'' This was answered in the negative by Bourgain \cite{Bou86} and Marstrand \cite{Mar87}. We study generalizations of this question, in which ``lines through a point'' or ``circles with prescribed center'' are replaced by more general analytic families of plane curves.

    As a key ingredient, we prove new maximal function and local smoothing estimates for families of curves satisfying a multi-parameter analogue of Sogge's cinematic curvature condition \cite{Sog91}; this allows us to establish the non-existence of Nikodym-type sets in certain specific settings. The result also resolves a local smoothing conjecture in \cite{Zah26}. In the other direction, we use a recent nonlinear generalization of Falconer's prescribed projection theorem \cite{Fal86} to construct Nikodym-type sets in other specific settings. Combining these ingredients with tools from tame geometry, we precisely characterize when Nikodym-type sets exist for analytic families of plane curves. 
\end{abstract}


\section{Introduction}

\subsection{Statement of the problem}
In \cite{Nik27}, Nikodym constructed a Borel set $E\subset[0,1]^2$ with the following remarkable property: $E$ has zero two-dimensional Lebesgue measure, and for each $z\in[0,1]^2\backslash E$, there is a line $\ell$ containing $z$ so that 
\begin{equation}
\ell\cap E = \ell\cap[0,1]^2\backslash\{z\}.
\end{equation}
 In particular, if $z\in[0,1]^2\backslash E$ is not a corner of $[0,1]^2$ then $\mathcal{H}^1(\ell\cap E)>0$. Such sets are now called Nikodym sets.\footnote{Sometimes the set $[0,1]^2\backslash E$ is referred to as a Nikodym set.} 

Fenton \cite{Lep76} and independently Falconer \cite[Chapter 7]{Fal85} asked what happens when lines are replaced by circles: does there exist a Borel set $E\subset\RR^2$ with $|E|=0$ that contains a circle with every center? Or more modestly, does there exist a Borel set with $|E|=0$ that contains a set of circular arcs whose centers form a set of positive measure? Bourgain \cite{Bou86} and Marstrand \cite{Mar87} independently proved that no such set exists.

In this paper, we consider a generalization of Fenton and Falconer's question in which lines or circles are replaced by other families of curves. The tools to answer this question have recently become available, due to the following advances in harmonic analysis and geometric measure theory:
\begin{itemize}
	\item The Bourgain-Demeter-Guth $\ell^2$ decoupling theorems \cite{BDG16}, which we use to prove new local smoothing estimates. These are used to establish the nonexistence of Nikodym-type sets.

	\item Recent nonlinear generalizations \cite{BT23, CC19, CMT23, CFMMT} of Davies' efficient covering theorem \cite{Dav52} and Falconer's work \cite{Fal86} on prescribed projections. These are used to establish the existence of Nikodym-type sets.
\end{itemize}

Though it is possible to state our results using the notation and terminology of (smooth) families of smooth curves, it will be helpful to explicitly parameterize our curves as the graphs of smooth functions above a common closed interval. Our setup is as follows. Here and throughout, all intervals are assumed to have positive, finite length unless explicitly stated otherwise.
\begin{definition}\label{familyOfPlaneCurves}
Let $I\subset\RR$ be a closed interval and let $W\subset\RR^n\times\RR^d$. We will use coordinates $(z,u)\in\RR^n\times\RR^d$ to denote points in $W$. Let $\gamma\colon I\times W \to\RR$ be smooth. For each $(z,u)\in W$, define the curve
\[
\beta_{z,u} = \{(t, \gamma(t, z,u))\colon t\in I\}.
\]
If the defining function is ambiguous, we might write this as $\beta^\gamma_{z,u}$. For $z\in\RR^n$, define $\Gamma_z = \{\beta_{z, u}\colon (z,u)\in W\}$, and define $\Gamma = \{\Gamma_z\colon z\in\RR^n\}.$ We call $\Gamma$ a \emph{family of plane curves} associated to the defining function $\gamma$.
\end{definition}
For example, if $n=2$ and we write $z=(z_1,z_2)$ and
\begin{equation}
\gamma_{\textrm{lines}}(t, z, u)=z_2 + u(t-z_1),
\end{equation}
then $\beta_{z,u}$ is the line passing through $z$ with slope $u$ (restricted to the interval $I$). As a second example, if $I = [-1/10, 1/10]$, $W = B(0, 1/10)\times(1,2),$ and 
\begin{equation}
\gamma_{\textrm{circles}}(t, z, u)=z_2 + \sqrt{u^2 - (t-z_1)^2},
\end{equation}
then $\beta_{z,u}$ is the upper arc of the circle with center $z$ and radius $u$, restricted to the interval $[-1/10,1/10]$.

\begin{definition}\label{definitionOfNikodymSet}
Let $\Gamma$ be a family of plane curves associated to the defining function $\gamma$, which has domain $I\times W$. We say that $\Gamma$ \emph{admits Nikodym sets} if there are Borel sets $E\subset\RR^2$ and $F\subset\RR^n $ with $|E| = 0$ and $|F|>0$, so that for each $z\in F$, there exists $u$ with $(z,u)\in W$ so that $\mathcal{H}^1(\beta_{z,u}\cap E)>0$. 
\end{definition}
Nikodym's construction shows that $\Gamma_{\textrm{lines}}$ admits Nikodym sets, while Bourgain \cite{Bou86} and Marstrand \cite{Mar87} proved that $\Gamma_{\textrm{circles}}$ does not admit Nikodym sets.

In this paper, we investigate the following question: which families of plane curves admit Nikodym sets? When $n=1$, a construction by Wisewell\footnote{Wisewell in fact proved a more general result, see Theorem \ref{WisewellThm}.} shows that we always have Nikodym sets, unless the defining function $\gamma$ does not depend on $u$. 

We will mostly be concerned with the case $n=2$. In Theorem \ref{characterizationOfNikodymSets}, we give a precise characterization of which families of curves admit Nikodym sets, under the additional assumption that $W$ is open and $\gamma$ is analytic in a neighbourhood of $I \times \overline W$. The assumption that $W$ is open is mostly for convenience. The assumption that $\gamma$ is analytic (or a similar regularity assumption) is likely necessary in order to obtain a satisfactory answer, since otherwise (say if we only assume that $\gamma$ is smooth) different parts of the domain of $\gamma$ might have dramatically different behavior.

Before stating Theorem \ref{characterizationOfNikodymSets}, we will discuss several key challenges and features of this problem, which motivate the precise statement.

\subsection{Maximal functions and cinematic curvature}\label{maximalFnAndCinematicSection}
Bourgain's proof that $\Gamma_{\textrm{circles}}$ does not admit Nikodym sets is a consequence of a stronger result about the Bourgain circular maximal function. For $f\colon\RR^2\to\CC$ and $z\in\RR^2$, we define the (truncated) Bourgain circular maximal function
\[
Bf(z) = \sup_{u\in[1,2]}\int_{C(z,u)}|f(w)|dw.
\]
In the above, $C(z,u)$ is the circle with center $z$ and radius $u$. Bourgain \cite{Bou86} proved that
\begin{equation}\label{260618e_geometric}
\Vert Bf\Vert_p \leq C_p\Vert f\Vert_p
\end{equation}
 for all $p>2$. As a consequence, $\Gamma_{\textrm{circles}}$ does not admit Nikodym sets. Bourgain's original proof is very intricate; later Mockenhaupt, Seeger and Sogge \cite{MSS92} gave a short proof of Bourgain's result using local smoothing estimates, and Schlag \cite{Sch98} gave a purely geometric proof of Bourgain's result. 

We will define the analogue of Bourgain's circular maximal function for a general family of curves associated to a defining function $\gamma$.
\begin{definition}\label{averagingAndMaximalOperators}
Given $\gamma\colon I\times W\to\RR$ as in Definition \ref{familyOfPlaneCurves}, $f\colon \RR^2 \to\CC$ smooth, and $(z,u)\in W$, define the averaging operator
\[
A^\gamma f(z, u) = \int_I f(t, \gamma(t, z, u))dt,
\]
and define the maximal operator
\begin{equation}\label{defnMGamma}
M^\gamma f(z) = \sup_u \left|\int_I f(t, \gamma(t, z, u))dt\right|,
\end{equation}
where the supremum is taken over all $u\in\RR^d$ with $(z,u)\in W$ (with this definition, the domain of $M^\gamma$ is the projection of $W\subset\RR^n\times\RR^d$ to the first $n$ coordinates; we can extend the domain to all of $\RR^n$ by defining $M^\gamma$ to be 0 elsewhere).
\end{definition}
Notice that $M^\gamma$ has the same $L^p$ mapping properties as the maximal operator
\[
\sup_u \int_I |f(t, \gamma(t, z, u))|dt.
\]
If $M^\gamma$ is bounded on $L^p(\RR^2)$ for some $p<\infty$, then the corresponding family of curves $\Gamma$ does not admit Nikodym sets. We can verify this as follows. Let $E\subset\RR^2$ be a null set. For $s>0$, let $F_s$ be the set of points $z$ for which there exists a curve $\beta_{z,u}\in\Gamma_z$ with $\mathcal{H}^1(\beta_{z,u}\cap E)\geq s$. It suffices to show that $|F_s|=0$. Let $E^\circ\supset E$ be an open set with $|E^\circ|<\eps$, and let $f_j\colon \RR^2\to\RR$ be a sequence of smooth functions with $0\leq f_j\leq 1_{E^\circ}$, and $f_j\nearrow 1_{E^\circ}$ pointwise. By monotone convergence, we have $\lim_{j\to\infty} M^\gamma f_j(z)\gtrsim s$ for $z\in F_s$ (since the maximal operator $M^\gamma$ is defined with respect to integration in $t$, the implicit constant depends on $\sup |\partial_t  \gamma(t, z, u)|$), and hence 
 \begin{equation}
\Vert M^\gamma\Vert_{L^p\to L^p}\geq \sup_{j}\frac{\Vert M^\gamma f_j\Vert_p}{\Vert f_j\Vert_p} \gtrsim \eps^{-1/p}s|F_s|^{1/p}.
 \end{equation}
  Since $\eps>0$ was arbitrary, we conclude that $|F_s|=0$.\\

In \cite{Sog91, MSS92, MSS93}, Sogge and Mockenhaupt-Seeger-Sogge generalized Bourgain's result on the Bourgain circular maximal function to a broader class of curves that obey a property called cinematic curvature. In brief, a pair of distinct circles with centers $z,z'\in\RR^2$ and radii $u,u'\in[1,2]$ can be tangent, but if so, then their curvature must differ at the point of tangency. Indeed, if we (locally) represent the circles as the graphs of functions $\gamma(t, z, u)$ and $\gamma(t, z', u')$, then 
\begin{multline}\label{SoggeForbid}
\inf_t \big(|\gamma(t, z, u)-\gamma(t, z', u')| + |\partial_t\gamma(t, z, u) -\partial_t\gamma(t, z', u')| \\
+ |\partial_t^2\gamma(t, z, u) -\partial_t^2\gamma(t, z', u')|\big)
\gtrsim \big|(z,u) - (z', u')\big|.
\end{multline}
Writing $z=(z_1,z_2)$ and localizing to a suitably small region, this is the same as requiring that for all $(t, z, u)\in I\times W$, we have
\begin{equation}\label{soggeLocal}
\det \Bigg(\begin{array}{ccc}
\partial_{z_1}\gamma & \partial_{z_2}\gamma & \partial_u\gamma\\
\partial_{z_1}\partial_t\gamma & \partial_{z_2}\partial_t\gamma & \partial_u\partial_t\gamma\\
\partial_{z_1}\partial_t^2\gamma & \partial_{z_2}\partial_t^2\gamma & \partial_u\partial_t^2\gamma\\
\end{array}\Bigg) \neq 0.
\end{equation}
 Families of curves $\Gamma$ whose defining function $\gamma$ satisfies \eqref{soggeLocal} are said to satisfy Sogge's cinematic curvature condition. Sogge and Mockenhaupt-Seeger-Sogge obtained local smoothing estimates for the associated averaging operator $A^\gamma$, which in turn implied that the associated maximal function is bounded on $L^p(\RR^2)$ for every $p>2$. 

In the above discussion, it is important that circles are defined by three parameters. More generally, if $W\subset\RR^2\times\RR^d$ then we have a $d+2$ parameter family of curves, and the analogous multiparameter cinematic curvature condition is 
\begin{equation}\label{soggeLocalMultiParam}
\det \left(\begin{array}{ccccc}
\partial_{z_1}\gamma & \partial_{z_2} \gamma & \partial_{u_1} \gamma& \ldots& \partial_{u_d} \gamma\\
\partial_{z_1}\partial_t \gamma & \partial_{z_2}\partial_t \gamma & \partial_{u_1}\partial_t \gamma& \ldots&  \partial_{u_d}\partial_t \gamma\\
\partial_{z_1}\partial_t^2 \gamma & \partial_{z_2}\partial_t^2 \gamma & \partial_{u_1}\partial_t^2 \gamma& \ldots&  \partial_{u_d}\partial_t^2 \gamma\\
\vdots & \vdots & \vdots & \ddots & \vdots \\
\partial_{z_1}\partial_t^{d+1} \gamma & \partial_{z_2}\partial_t^{d+1} \gamma & \partial_{u_1}\partial_t^{d+1} \gamma& \ldots&  \partial_{u_d}\partial_t^{d+1} \gamma
\end{array}\right) \neq 0.
\end{equation}
Using tools from decoupling, we prove the following result, which may be of independent interest. Here and in what follows, if $W'\subset W$, then we write $\gamma|_{W'}$ to denote the restriction of $\gamma$ to $I\times W'$. 

 \begin{theorem}\label{maximalFnBound}
 Let $I\subset\RR$ be a closed interval, let $W\subset\RR^2\times\RR^d$ be open, let $\gamma\colon I\times W \to\RR$ be smooth. Suppose that $\gamma$ satisfies the multiparameter cinematic curvature condition \eqref{soggeLocalMultiParam} on $I\times W$. 

Let $W'\subset W$ be compact, and let $M^\gamma$ be the maximal operator associated to $\gamma|_{W'}$, in the sense of Definition \ref{averagingAndMaximalOperators}. Then $M^\gamma$ is bounded on $L^p(\RR^2)$ for all $p>2$ when $d=1$, and $p>d+2$ for $d>1$.
 \end{theorem}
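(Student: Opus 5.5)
We follow the Mockenhaupt--Seeger--Sogge strategy: reduce the maximal bound to a local smoothing estimate for the averaging operator $A^\gamma$, and prove that estimate by $\ell^2$ decoupling for the relevant nondegenerate curve in frequency space.

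\textbf{Localization.} By compactness of $W'$ and a partition of unity, we may assume $W'$ is a small ball around a point $(z_0,u_0)$ and $I$ is a short interval, with \eqref{soggeLocalMultiParam} holding uniformly there. Decompose $f=\sum_{k\ge 0} P_k f$ into Littlewood--Paley pieces at frequency $2^k$. The low-frequency piece is handled trivially, since $A^\gamma P_0 f$ is smooth in $u$.

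\textbf{Reduction to local smoothing.} Regard $F_k(z,u)=A^\gamma P_kf(z,u)$ as a function on $\RR^{2+d}$. By Sobolev embedding in the $d$ variables $u$ (restricted to a compact set),
\[
\|\sup_u|F_k|\|_{L^p_z}\lesssim \|F_k\|_{L^p}^{1-d/p}\,\|\nabla_u^{d}F_k\|_{L^p}^{d/p},
\]
or a fractional version of this. Each $u$-derivative costs $2^k$. It therefore suffices to prove
\[
\|A^\gamma P_k f\|_{L^p(\RR^{2+d})}\lesssim 2^{-k(d/p+\epsilon)}\|f\|_p,
\]
and summing in $k$ then gives the theorem.

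\textbf{Fourier integral operator structure.} $A^\gamma$ is a Fourier integral operator from $\RR^2$ to $\RR^{2+d}$. We write
\[
A^\gamma P_kf(z,u)=\iint e^{i(\xi_2\gamma(t,z,u)+\xi_1 t)}\,a\,\widehat{P_kf}(\xi)\,dt\,d\xi,
\]
up to the change of variables $x=(t,\gamma)$, and apply stationary phase in $t$. The fixed-time $L^2$ estimate gives $2^{-k/2}$. In $\RR^{2+d}$, the canonical relation has cone-like fibers: for each frequency direction, the set of output covectors $\xi_2\nabla_{z,u}\gamma(t,z,u)$ traced out as $t$ varies is a curve $t\mapsto\nabla_{z,u}\gamma(t,\cdot)$ in $\RR^{d+2}$. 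The cinematic condition \eqref{soggeLocalMultiParam} says exactly that this curve, together with its first $d+1$ $t$-derivatives, spans $\RR^{d+2}$. So locally in $(z,u)$, the frequency support of $F_k$ lies in a $2^k$-dilate of a neighbourhood of a cone over a nondegenerate curve in $\RR^{d+2}$, up to variable-coefficient perturbations.

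\textbf{Decoupling.} We apply the Bourgain--Demeter--Guth decoupling theorem for nondegenerate curves, and its cone extension, in the variable-coefficient form (Beltran--Hickman--Sogge type induction on scales, handling the dependence on $(z,u)$ by freezing coefficients at scale $2^{-k/2}$). This decouples $F_k$ into pieces whose $t$-interval has length $2^{-k/(d+1)}$, or whatever the curve's decoupling scale is. Each piece is essentially a single-wave-packet family with good $L^p$ control from the $L^2$ estimate and an $L^\infty$ kernel bound. The sharp $\ell^p$ decoupling exponent combined with these bounds gives local smoothing of order $d/p+\epsilon$ once $p$ exceeds the critical exponent:
\begin{itemize}
\item for $d=1$ (the three-dimensional cone), this is the range $p>2$ after interpolating with $L^2$, as in the Mockenhaupt--Seeger--Sogge and Guth--Wang--Zhang results;
\item for $d>1$, the critical exponent from the moment-curve decoupling in $\RR^{d+2}$ yields $p>d+2$.
\end{itemize}

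\textbf{Main obstacle.} The hard step is the variable-coefficient decoupling. The $d=1$ case needs full sharp local smoothing for cinematic curves, and there I would invoke the known variable-coefficient results (Gao--Liu--Miao--Xi or Pramanik--Seeger style). For $d>1$ the cone over a nondegenerate curve in $\RR^{d+2}$ must be treated by induction on scales, rescaling the degenerate curve near each $t$. I would first try to verify that the rescaled families still satisfy \eqref{soggeLocalMultiParam} uniformly, so that the induction closes.
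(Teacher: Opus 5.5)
Your Fourier-analytic reduction is essentially the paper's proof of Theorem~\ref{averagingOp}: Littlewood--Paley pieces, Sobolev embedding in $u$ at cost $2^{kd/p}$, variable-coefficient decoupling down to $2^{-k/2}$-intervals, then $L^2$/$L^\infty$ bounds on the pieces. For $d=1$, citing Mockenhaupt--Seeger--Sogge is exactly what the paper does.

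\textbf{The gap is the range for $d\ge 2$.} The decoupling inequality for the cone over a nondegenerate curve (Lemma~\ref{d-decoupling}) has critical exponent $(d+1)(d+2)$, not $d+2$. Only for $p\ge (d+1)(d+2)$ does the iterated decoupling give the sharp gain $2^{-k(\frac{d+1}{p}-\eps)}$.

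Below that exponent, passing from $\ell^2$ to $\ell^p$ decoupling costs a power of the number of caps. Running your scheme for $d(d+1)\le p\le (d+1)(d+2)$, the gain is $\frac{1}{2(d+1)}+\frac{d}{2p}$. This exceeds the Sobolev cost $d/p$ only when $p>d(d+1)$. For smaller $p$, further decoupling steps become subcritical and the situation only gets worse. Interpolating with the $L^2$ estimate (gain $1/2<d/2$) does not help either: gain minus $d/p$ is already a concave piecewise-linear function of $1/p$ that vanishes at $p=d(d+1)$.

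So your claim that decoupling gives local smoothing of order $d/p+\eps$ for all $p>d+2$ is unjustified. Your argument stops at $p>d(d+1)$. For example, you get $p>6$ instead of $p>4$ when $d=2$, and $p>12$ for general ellipses, which is Lee--Lee--Oh's range.

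\textbf{The missing idea is a non-Fourier-analytic endpoint estimate at $L^{d+2}$.} This is Zahl's bound $\Vert M^\gamma(P_kf)\Vert_{L^{d+2}}\le C_\eps 2^{k\eps}\Vert f\Vert_{L^{d+2}}$ (Theorem~\ref{ZahlThm}). It comes from a purely geometric maximal estimate for $\delta$-thickened cinematic curves, taking a supremum over $d+1$ of the $d+2$ parameters; in that setting the transversality hypothesis of \cite{Zah26} is vacuous.

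The paper uses decoupling only to get high-frequency decay $\Vert M^\gamma_\chi P_kf\Vert_{p}\lesssim 2^{-k(\frac1p-\eps)}\Vert f\Vert_p$ at $p=(d+1)(d+2)$ (Corollary~\ref{maximalFnBdsCor}). It then interpolates this with the $\eps$-loss $L^{d+2}$ bound to obtain decay $2^{-k\eta}$ with $\eta=\eta(p)>0$, hence summability in $k$, for every $p>d+2$. Without an input of this kind at $L^{d+2}$, your proof establishes the theorem only in the smaller range $p>d(d+1)$.
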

 \begin{corollary}\label{corOfBddMaximalFn}
  Let $I\subset\RR$ be a closed interval, let $W\subset\RR^2\times\RR^d$ be open, let $\gamma\colon I\times W \to\RR$ be smooth. Suppose that $\gamma$ satisfies the multiparameter cinematic curvature condition \eqref{soggeLocalMultiParam} on $I\times W$. Then the family of curves associated to $\gamma$ does not admit Nikodym sets.
 \end{corollary}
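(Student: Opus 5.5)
The plan is to deduce Corollary \ref{corOfBddMaximalFn} from Theorem \ref{maximalFnBound}, combined with the monotone convergence argument given after Definition \ref{averagingAndMaximalOperators}. The obstruction is that Theorem \ref{maximalFnBound} only bounds $M^\gamma$ for the restriction $\gamma|_{W'}$ to a compact set $W'\subset W$. Nikodym sets, on the other hand, quantify over all of $W$. So the first step is a localization by countable exhaustion.

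Suppose, for contradiction, that $E\subset\RR^2$ is Borel with $|E|=0$, that $F\subset\RR^2$ is Borel with $|F|>0$, and that for each $z\in F$ there is $u$ with $(z,u)\in W$ and $\mathcal{H}^1(\beta_{z,u}\cap E)>0$. Since $W$ is open in $\RR^{2+d}$, write $W=\bigcup_k W_k$ with $W_k$ compact; for instance, take closed rational balls whose closures lie in $W$. Also fix $s\in\{1/m : m\in\mathbb{N}\}$. Let $F_{k,s}$ be the set of $z$ for which some $u$ has $(z,u)\in W_k$ and $\mathcal{H}^1(\beta_{z,u}\cap E)\ge s$. Then $F\subset\bigcup_{k,s}F_{k,s}$. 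By countable subadditivity, it suffices to show that each $F_{k,s}$ has outer measure zero. This sidesteps any measurability question about $F_{k,s}$; it is a projection of a Borel-type set, but outer measure is enough here.

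Next, fix $k$ and $s$. On the compact set $I\times W_k$, the smooth function $\gamma$ satisfies $|\partial_t\gamma|\le C_k$. Hence arclength on $\beta_{z,u}$ is comparable to $dt$, and $\mathcal{H}^1(\beta_{z,u}\cap E)\ge s$ implies $|\{t\in I: (t,\gamma(t,z,u))\in E\}|\ge s/\sqrt{1+C_k^2}$.

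Now take an open set $E^\circ\supset E$ with $|E^\circ|<\eps$, and smooth functions $0\le f_j\nearrow 1_{E^\circ}$. For each $z\in F_{k,s}$, choose a witness $u$. Monotone convergence then gives $\liminf_j M^{\gamma|_{W_k}}f_j(z)\ge c_k s$. Fix any $p>\max(2,d+2)$. Theorem \ref{maximalFnBound} applies to $W'=W_k$ and gives $\|M^{\gamma|_{W_k}}f_j\|_p\le C\|f_j\|_p\le C\eps^{1/p}$. By Fatou's lemma (applied via Chebyshev on a measurable cover of $F_{k,s}$, or directly to the lower semicontinuous limit), we get $(c_ks)^p|F_{k,s}|^*\lesssim C^p\eps$. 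Letting $\eps\to0$ gives $|F_{k,s}|^*=0$, which is the desired contradiction.

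The only mildly delicate point is measurability. Since $M^{\gamma|_{W_k}}f_j$ is a supremum over a compact family of a continuous function, it is lower semicontinuous in $z$ and hence Borel. The same holds for $\liminf_j$ of these functions, so the superlevel set $\{z : \liminf_j M^{\gamma|_{W_k}} f_j(z) \ge c_k s\}$ is measurable. This set contains $F_{k,s}$, so the Chebyshev estimate applies to it directly. The real content of the argument is Theorem \ref{maximalFnBound} itself, which we assume.
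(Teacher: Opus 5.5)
For $d\ge 1$ your argument is the paper's: Theorem~\ref{maximalFnBound} applied on compact pieces of $W$, combined with the monotone convergence/Chebyshev argument from the introduction. Your exhaustion of $W$ by compact balls $W_k$ and the sets $F_{k,s}$ correctly make explicit the localization the paper leaves implicit. The gap is the case $d=0$, which the statement allows. It is also used later in the paper: in Case~\ref{CaseMc=Mp+2} of Theorem~\ref{characterizationOfNikodymSets}, after reducing to $m_p=d$, one can have $m_p=d=0$, and the corollary is then applied with $d=0$. Theorem~\ref{maximalFnBound} only asserts boundedness for $d=1$ and $d>1$. So your step ``fix $p>\max(2,d+2)$; Theorem~\ref{maximalFnBound} applies to $W'=W_k$'' uses that theorem outside its stated range when $d=0$. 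This is why the paper treats $d=0$ separately, by a change of variables and Fubini.

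The fix is short. When $d=0$, condition \eqref{soggeLocalMultiParam} is a $2\times 2$ determinant whose first row is $\nabla_z\gamma$. Hence $\nabla_z\gamma(t,z)\neq 0$ everywhere, and $(t,z)\mapsto (t,\gamma(t,z))$ is a submersion from $I\times W\subset\RR^3$ to $\RR^2$. Locally you can complete it to a diffeomorphism of $\RR^3$, so the preimage of the null set $E$ is null in $\RR^3$. Fubini then gives $|\{t\in I\colon (t,\gamma(t,z))\in E\}|=0$ for a.e.\ $z$, so no Nikodym set exists.

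There is also a minor slip in your measurability remark. $M^{\gamma|_{W_k}}f_j$ is not lower semicontinuous, because the fiber $\{u\colon (z,u)\in W_k\}$ depends on $z$ and is empty just outside $\pi_z(W_k)$. Since $W_k$ is compact and $A^\gamma f_j$ is continuous, the function is in fact upper semicontinuous. That is still Borel, so your Chebyshev/Fatou step is unaffected.
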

  Corollary \ref{corOfBddMaximalFn} follows from Theorem \ref{maximalFnBound} when $d\geq 1$. For $d=0$, the result follows from a change of variables and Fubini. 

When $d=1$, Theorem \ref{maximalFnBound} was proved by Mockenhaupt-Seeger-Sogge \cite{MSS92}. A special case of Theorem \ref{maximalFnBound} was proved by Lee-Lee-Oh \cite{LLO25} in which $\Gamma$ is a family of ellipses, which corresponds to $d=3$; Lee-Lee-Oh established the result for $p>12$. Lee-Lee-Oh also considered the case where $\Gamma$ is a family of axis-parallel ellipses, which corresponds to $d=2$; Lee-Lee-Oh established the result for $p>4$. In Appendix \ref{ellipticMaximalOperatorSection}, we will combine Theorem \ref{maximalFnBound} with an estimate in \cite{PYZ22} and show that the axis-parallel elliptic maximal operator (first introduced by \erdogan \cite{Erd03}) is bounded on $L^p(\RR^2)$ in the sharp range $p>3$. 

Theorem \ref{maximalFnBound} will be a consequence of a stronger result, which says that the averaging operator $A^\gamma$ admits a nontrivial local smoothing estimate (see Theorem \ref{averagingOp} below), and the smoothly localized maximal operator $M^\gamma_\chi$ has a property called ``high frequency decay''. This resolves a local smoothing conjecture by the sixth author \cite{Zah26}. 

The range of Lebesgue exponents $p>d+2$ in Theorem \ref{maximalFnBound} is obtained by combining the high frequency decay estimate mentioned above (this estimate uses Fourier decoupling) with estimates from \cite{PYZ22} and \cite{Zah26}  obtained through purely geometric arguments.  For some special class of $\gamma$, this strategy can be employed more efficiently to prove that $M^\gamma$ is bounded in the range $p>d+1$, see Proposition \ref{230609theorem2_1} for an application of such a strategy to the axis-parallel elliptic maximal operator, and see \cite[Section 7]{Zah26} for further details. Moreover, examples in \cite{Zah26} show that for every $\gamma$ in Theorem \ref{maximalFnBound}, the operator $M^{\gamma}$ is unbounded on $L^p$ for every $p< d+1$. It is an interesting open problem to find the optimal range of $p$ for general $\gamma:$ 
\begin{problem}
What is the sharp range of $L^p$ bounds for the operator $M^\gamma$ described in Theorem \ref{maximalFnBound}?
\end{problem}

As discussed above, if we have an operator bound $\Vert M^\gamma f(z)\Vert_p\lesssim \Vert f\Vert_p$ for some $p<\infty,$ then the family of curves $\Gamma$ does \emph{not} admit Nikodym sets. Unfortunately the converse is false, as the next example illustrates:
\begin{example}\label{noNikodymUnboundedMaximal}
Let $I=[0,1]$, $W = (0,1)^2\times(0,1)$. Define 
\begin{equation}
\gamma(t,z,u) = z_2 + u(t-z_1)+u^2(t-z_1)^2.
\end{equation}
We will show in Appendix \ref{exampleNoNikodymUnboundedMaximalSection} that the family of curves $\Gamma$ associated to $\gamma$ does not admit Nikodym sets, but the maximal operator $M^\gamma$ is unbounded on $L^p(\RR^2)$ for all $p<\infty$. The reason for this is as follows. Let $r_n\searrow 0$ and let $K_n = [0,1]\times[0,r_n]$. We can construct a sequence of sets  $E_n\subset K_n$ and $F_n\subset K_n$ with $|E_n|/|K_n|\searrow 0$ and $|F_n|\geq |K_n|/100$ so that for each $z\in F_n$ there is a curve $\beta_{z,u}$ with $\mathcal{H}^1(\beta_{z,u}\cap E_n)\geq 1/100$. In particular, $M^\gamma(\chi_{E_n})\geq 1/400$ on $F_n$, and hence 
\begin{equation}
\lim_{n\to\infty} \Vert M^\gamma \chi_{E_n}\Vert_p / \Vert \chi_{E_n}\Vert_p= \infty.
\end{equation}

This example highlights an interesting subtlety in the study of generalized Nikodym sets. The sets $E_n$ are (rescaled) approximations to a classical Nikodym set---if $\tilde E_n$ denotes the image of $E_n$ under the rescaling sending $K_n$ to $[0,1]^2$, then $\tilde E_n$ converges to a classical Nikodym set. Thus the family of curves associated to $\gamma$ does not admit Nikodym sets, but it does admit a sequence of sets that become a Nikodym set in an appropriate rescaling limit. 
\end{example}

We conclude this section with a slight variant of Bourgain's circular maximal function, in which circles centered at $z$ are replaced by parabolas with vertex $z$.
\begin{example}\label{bourgainParabolasExample}
Let $I=[0,1]$, $W=(-1,1)^2\times(-1,1)$. Define $\gamma(t,z,u) = z_2 + u(t-z_1)^2$. For $(t, z, u)\in I\times W$, the determinant in \eqref{soggeLocal} evaluates to $4u$, so in particular the cinematic curvature vanishes on the surface $\{u=0\}$ within the domain of $\gamma$. However, it was shown in \cite{MR98} that the associated maximal operator $M^\gamma$ is bounded on $L^p(\RR^2)$ for all $p>2$. 
\end{example}
Examples \ref{noNikodymUnboundedMaximal} and \ref{bourgainParabolasExample} show that the (un)boundedness of $M^\gamma$ does not depend solely on the (non)vanishing of cinematic curvature on the domain (or closure of the domain) of $\gamma$. It is an interesting open problem to characterize those defining functions $\gamma$ for which the associated operator $M^\gamma$ is bounded on $L^p(\RR^2)$ for some $p<\infty$. In the one parameter, translation-invariant analytic case, i.e.~$d=1$ and $\gamma(t, z, u) = z_2 + h(z_1-t, u)$ for $h$ analytic, necessary and sufficient conditions for the boundedness of $M^\gamma$ were proved in \cite{CG24}. In this direction, we propose the following conjecture.
\begin{conjecture}
$M^\gamma$ is unbounded if and only if $\gamma$ admits Nikodym sets, possibly after taking an appropriate rescaling limit (in the spirit of Example \ref{noNikodymUnboundedMaximal}).
\end{conjecture}

\subsection{Local curve reuse}
Here is a key difference between Nikodym sets for lines (in the sense of Nikodym's original construction), and Nikodym sets for circles (whose existence is shown to be impossible): there are many pairs $(z,u)$ that give rise to the same line segment $\{(t, z_2+u(t-z_1))\colon t\in I\}$; indeed, two such pairs $(z,u)$ and $(z',u')$ give rise to the same line segment precisely when $u=u'$ and $z_2'-z_2=u(z_1'-z_1)$. On the other hand, each pair $(z,u)$ creates a distinct circle $C(z,u)$ (or circular arc). 

\begin{definition}\label{curveReuseDefn}
Let $\Gamma$ be a family of plane curves associated to the defining function $\gamma\colon I\times W\to\RR$. For each curve $\beta$, define
\begin{align*}
\hat\beta & = \{(z,u)\in W\colon \beta_{z,u}=\beta\},\\
\beta^* & = \pi_z(\hat\beta),
\end{align*}
where $\pi_z\colon \RR^2\times\RR^d\to\RR^2$ denotes the projection to the first two coordinates. If the domain $W$ is not clear from context, then we might write $\hat\beta^W$ for $\hat\beta\cap W$, and $(\beta^W)^*$ for $\pi_z(\hat\beta^W)$.

We say a curve $\beta$ is \emph{reused} (in $W$) if $\beta^*$ contains a set with positive one-dimensional Hausdorff measure, and we say $\beta$ is \emph{massively reused} if $\beta^*$ contains a set with positive two-dimensional Lebesgue measure\footnote{As we will see below, if $\gamma$ is analytic and $W$ is open, then $\beta$ reused implies that $\beta^*$ contains a smooth 1-dimensional manifold, while $\beta$ massively reused implies that $\beta^*$ contains an open set.}. We say that $\beta$ is \emph{locally reused near the point} $(z,u)$ if for every neighbourhood $W^\circ$ of $(z,u)$, we have that $\pi_z(\hat\beta\cap W^\circ)$ contains a set with positive one-dimensional Hausdorff measure. If the set $W$ is not clear from context, then we will say that $\beta$ is locally reused in $W$ near the point $(z,u)$.

We say that $\Gamma$ has \emph{curve reuse} if $\bigcup \beta^*$ contains a set with positive two-dimensional Lebesgue measure, where the union is taken over all reused curves.  We say that $\Gamma$ has \emph{massive curve reuse} if there is a curve that is massively reused. We say that $\Gamma$ has \emph{local curve reuse (in W)} if the set of points 
\[
\{z\colon\ \exists\ u\ \textrm{s.t.}\ \beta_{z,u}\ \textrm{has local curve reuse near}\ (z,u)\}
\]
contains a set with positive two-dimensional Lebesgue measure. As above, if the domain $W$ is not clear from context, then we will say $\Gamma^W$ has curve reuse (resp.~massive curve reuse, local curve reuse).
\end{definition}

\begin{remark}\label{curveReuseIsLocalRemark}
Whether a fixed curve is reused or massively reused, and whether $\Gamma$ admits Nikodym sets, are \emph{local properties}, in the following sense. Let $\gamma\colon I \times W\to\RR$, let $\{W_i\}$ be a countable cover of $W$ by open sets, and let $\Gamma_i$ be the set of curves corresponding to $\gamma|_{W_i}$. 
\begin{itemize}
\item A curve $\beta$ is reused (resp.~massively reused) in $W$ if and only if the same is true for at least one $W_i$.
\item $\Gamma$ admits Nikodym sets if and only if the same is true for at least one $\Gamma_i$. 
\end{itemize}
\end{remark}

If $\Gamma$ has massive curve reuse, then clearly $\Gamma$ admits Nikodym sets. The following example illustrates this phenomenon.
\begin{example}[Boring Nikodym sets]\label{dumbNikodymExample}
Let $I=[0,1]$, $W=[0,1]^3$. Let $\gamma(t, z, u) = z_2-u$, so $\beta_{z,u} = I\times\{z_2-u\}$. Let $E = \RR\times\{0\}$ and $F = [0,1]^2$. Then $|E| = 0$, $|F|>0$. For each $z\in F$, choose $u = z_2$. Then $\mathcal{H}^1(\beta_{z,u}\cap E)>0$. We conclude that the family of curves $\Gamma$ associated to $\gamma$ admits Nikodym sets.
\end{example}


\subsection{Curve reuse arising from foliations}
Next, we present two examples of families of curves that have local curve reuse, but do not admit Nikodym sets. 
\begin{example}[Boring non-Nikodym sets]\label{dumbNikodymNonExample}
Consider the family of horizontal lines $\gamma_{\operatorname{HL}}(t, z, u) = z_2$, so $\beta_{z,u}$ is the horizontal line segment $I\times\{z_2\}$. Every curve $\beta_{z,u}$ is reused, and $\gamma$ has local curve reuse. However, by Fubini, the family of curves $\Gamma$ associated to $\gamma$ does not admit Nikodym sets.
\end{example}

\begin{example}[Boring non-Nikodym sets]\label{dumbNikodymNonExample2}
Consider the family of unit parabolas $\gamma_{UP} (t, z, u) = (t-z_1)^2$, so $\beta_{z,u}$ is the unit parabola above the interval $I$ with vertex $(z_1,0)$. Every curve is reused, and $\gamma$ has local curve reuse. However the family of curves $\Gamma$ associated to $\gamma$ does not admit Nikodym sets.
\end{example}

We will introduce several definitions to help us better understand these examples and their implications. 
\begin{definition}
Let $\{V_i\}$ be a countable set of smooth vector fields from $\RR^2\to\RR^2$. We say a smooth curve $\beta$ is \emph{covered} by $\{V_i\}$ if for almost every $q\in\beta$, there is an index $i$ so that $V_i(q)\neq 0$ and $V_i(q)$ is parallel to the tangent space $T_q\beta$, i.e.,~if $\beta(s)$ is a unit-speed parameterization of $\beta$ along an interval $J$, then for a.e.~$s\in J$ there is an index $i$ so that $V_i(\beta(s))\neq 0$ and $V_i(\beta(s))\wedge \beta'(s)=0$.
\end{definition}

\begin{definition} \label{defnCompletelyCovered}
We say that a set of curves is \emph{completely covered by foliations} if there exists a countable set of vector fields $\{V_i\}$ so that every curve from the set is covered by $\{V_i\}$.
\end{definition}

\begin{definition}\label{coveredByFoliationsDefinition}
Let $\gamma$ be as above.
\begin{enumerate}[(I)]
\item We say that \emph{the reused curves in $\gamma$ are strongly covered by foliations} if there exists a countable set of vector fields $\{V_i\}$ so that every reused curve is covered by $\{V_i\}$.

\item We say that \emph{the reused curves in $\gamma$ are covered by foliations} if there exists a countable set of vector fields $\{V_i\}$ so that $\big|\bigcup \beta^*\big|=0$, where the union is taken over all reused curves that are not covered by $\{V_i\}$. 

\item We say that \emph{the locally reused curves in $\gamma$ are covered by foliations} if there exists a countable set of vector fields $\{V_i\}$ so that the set
\begin{equation}\label{setOfLocallyReusedCuvesCoveredByVi}
Q^\gamma(W,\{V_i\}) = \big\{z\colon \exists\ u\ \textrm{s.t.}\ \beta_{z,u}\ \textrm{has local curve reuse in $W$ near}\ (z,u),\ \textrm{and}\ \beta_{z,u}\ \textrm{is not covered by}\ \{V_i\}\big\}
\end{equation}
satisfies $|Q^\gamma(W,\{V_i\})|=0$.
\end{enumerate}
\end{definition}

\begin{definition}\label{curveReuseNotAECovered}
We say that $\gamma$ has \emph{local curve reuse not covered by foliations} if for every countable set of vector fields $\{V_i\}$, the set $Q^\gamma(W,\{V_i\})$ contains a set of positive two-dimensional Lebesgue measure. 
\end{definition}

The family of curves defined by $\gamma_{HL}$ from Example \ref{dumbNikodymNonExample} is covered by the constant vector field $V_1(z) = (1,0)$. The family of curves defined by $\gamma_{UP}$ from Example \ref{dumbNikodymNonExample2} cannot be covered by a single vector field. However, the curves from $\gamma_{UP}$ are covered by the pair of vector fields $\{V_1,V_2\}$, where $V_1(z) = \phi(z_2)(1, 2\sqrt{z_2})$ and $V_2(z)=\phi(z_2)(1,-2\sqrt{z_2})$, where $\phi(z_2) = e^{-1/z_2}$ for $z_2>0$ and $\phi(z_2)=0$ for $z_2\leq 0$.

\subsection{Which families of curves admit Nikodym sets?}

With the above definitions, we can now describe which families of curves admit Nikodym sets.

\begin{theorem}\label{NikodymIffCurveReuse}
Let $I\subset\RR$ be a bounded closed interval. Let $W\subset\RR^2\times\RR^d$ be open and let $\gamma\colon I\times W\to\RR$ be analytic on a neighbourhood of $I\times \bar W$. Let $\Gamma$ be a family of plane curves associated to the defining function $\gamma$. Then $\Gamma$ admits Nikodym sets if and only if at least one of the following holds: (i) $\Gamma$ has massive curve reuse, or (ii) $\Gamma$ has local curve reuse not covered by foliations.
\end{theorem}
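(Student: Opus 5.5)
We argue the two directions separately, using Remark \ref{curveReuseIsLocalRemark} to localize. By analyticity on a neighbourhood of $I\times\bar W$, the set of $(z,u)$ where the curve-reuse structure changes is a proper analytic (semianalytic) subset. So we first stratify $W$, up to a null set of $z$-projections, into countably many open pieces $W_i$. On each $W_i$, the fibres $\hat\beta$ of the map $(z,u)\mapsto\beta_{z,u}$ have constant dimension and the projections $\beta^*$ have constant dimension $0$, $1$ or $2$. The tame-geometry input is that for analytic $\gamma$ the map $(z,u)\mapsto(\partial_t^k\gamma(t_0,z,u))_{k\geq0}$ determines the curve. Its rank is therefore achieved by finitely many jets, namely $J_N(z,u)=(\partial_t^k\gamma(t_0,z,u))_{k\le N}$, and its rank is generically constant. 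Removing the lower-rank analytic subvariety costs only a null set in $z$, because only the projections of points with reuse matter.

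\emph{Sufficiency.} If (i) holds, Example \ref{dumbNikodymExample} applies: take $E$ to be the massively reused curve and $F$ an open subset of $\beta^*$. If (ii) holds, we pass to a piece $W_i$ on which $\beta^*$ is a smooth one-dimensional manifold for every curve. The curves are then parametrized by a quotient $(z,u)\mapsto\beta$, and through a positive-measure set of $z$ runs a one-parameter family of "anchor" curves $\beta^*$. The failure of covering by foliations means that, near a generic point, the map sending $z$ to the family of curves through it is genuinely two-dimensional in a nondegenerate way: the tangent directions of the curves through $z$ are not locally constant along $\beta^*$. In the line case this is exactly the structure Nikodym exploits. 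We then invoke the nonlinear prescribed-projection / Davies-type covering theorems \cite{CFMMT,BT23} mentioned in the introduction. Viewing each $z$ as choosing one curve from a one-parameter family, these theorems produce a null set $E$ containing, for a.e.\ $z$ in a positive-measure $F$, a positive-length piece of some $\beta_{z,u}$. The step to check is that the required nondegeneracy (a transversality/curvature hypothesis in those theorems) is exactly the negation of being covered by foliations. We would extract this by showing that if the hypothesis fails on an open set, then the curves through nearby points share tangent directions along an analytic direction field, which gives vector fields $V_i$ covering them.

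\emph{Necessity.} Suppose neither (i) nor (ii) holds, and split $W$ into three parts.
\begin{itemize}
\item On the set where there is no local reuse, the map $(z,u)\mapsto\beta_{z,u}$ is locally finite-to-one. After stratification, the jet map $(z,u)\mapsto J_{d+1}(z,u)$ has full rank $d+2$ at generic points, which is precisely the multiparameter cinematic curvature condition \eqref{soggeLocalMultiParam}, possibly after reparametrizing $u$ to drop redundant parameters. Corollary \ref{corOfBddMaximalFn} then gives no Nikodym sets on each such open piece. The degenerate points, where the rank drops without reuse, form a lower-dimensional analytic set. There we reparametrize: we restrict to submanifolds of smaller $d$ and argue by induction on $d$, with base case $d=0$ handled by Fubini.
\item On the set with local reuse covered by foliations, each relevant curve lies, up to a null set, in leaves of countably many foliations. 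For a foliation with nonvanishing $V_i$, Fubini in flow-box coordinates shows that a null set $E$ meets almost every leaf in a set of zero length. Since the bad $z$ form a null set by hypothesis, we get no Nikodym set from this part.
\item Massive reuse is absent, so no remaining exceptional set has positive measure.
\end{itemize}

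\emph{Main obstacle.} The hardest step is the rigidity dichotomy on the reused stratum: showing, via analyticity and o-minimal structure, that "not covered by foliations" forces the nondegeneracy needed by the nonlinear Falconer/Davies construction, and conversely that degeneracy yields analytic direction fields. We would first attempt this by studying the analytic map $\Phi(z,s)$ giving the tangent direction at a point $s$ of the curve through $z$ in the anchor family, and using the constant-rank theorem on its strata.
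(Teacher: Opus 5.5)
\textbf{The sufficiency argument for (ii) fails in one subcase.} Your plan rests on the claim that the nondegeneracy needed by the nonlinear prescribed projection theorem is exactly the negation of being covered by foliations, i.e.\ that non-coverage makes $z\mapsto\Gamma_z$ genuinely two-dimensional. This is false.

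Take $\gamma(t,z,u)=z_2+ut$. Every $\beta^*$ is a horizontal segment, and for fixed $t$ the jet map $(z_2,u)\mapsto(\gamma,\partial_t\gamma)$ is a diffeomorphism. So the argument of Step 3 in Case \ref{CaseMc=mp+1}.\ref{CaseMp>0} shows that, for every countable $\{V_i\}$, a positive-measure set of $z$ carries uncovered locally reused curves; hence (ii) holds. Yet $\Gamma_z$ depends only on $z_2$.

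The paper's construction needs two independent conditions:
\begin{itemize}
\item monotonicity in $t$ of the gradient angle of $\phi^t(z_2,s)=\gamma(t,0,z_2,s,\mathbf{0})$. This is the condition that corresponds to non-coverage, and it is automatic in that case.
\item monotonicity in $z_1$ of the gradient angle of the reuse maps $\psi^{z_1}(z_2,s)=\tilde z_2(z_1,z_2,s,\mathbf{0})$.
\end{itemize}
In the example $\psi^{z_1}(z_2,s)=z_2$. The positive-measure requirement therefore falls on a single fixed projection. That violates the angle-monotonicity hypothesis of Theorem \ref{CFMMT_thm}, and a single projection is not controlled by an a.e.-in-$\alpha$ conclusion anyway.

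This is a Kakeya/Besicovitch-type configuration. The paper treats it in Step 4 with Wisewell's theorem (Corollary \ref{localWisewellThm}), and uses the prescribed projection machinery (Proposition \ref{prescribedProjectionsTwoDirections}) only when the $\psi$-condition holds (Step 5). Your plan has no tool for the Step 4 alternative, and the equivalence you single out as the main obstacle cannot be proved.

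\textbf{The necessity argument has a gap in the covered-reuse part.} Fubini in a flow box shows that a null $E$ meets almost every leaf (in transverse measure) in a null set. You still need the set of $z$ whose chosen curve runs along a bad leaf to be null. Covering by foliations gives no control over how $z$ is mapped to leaves, and without analyticity the step genuinely fails.

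For example, let $\gamma(t,z)=T(z_2)$ with $T$ smooth, strictly increasing, and $T'=0$ on a fat Cantor set $C$. All curves are horizontal (covered by $V=(1,0)$) and there is no massive reuse. Yet $E=\RR\times T(C)$ is null, and every $z\in\RR\times C$ has its whole curve inside $E$.

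So analyticity must enter through the structure of the reused stratum. In Case \ref{CaseMc=mp+1}.\ref{CaseMp=0} the paper reduces to $d=0$ with $\gamma(t,z_1,\tilde z_2(z))=h(t,z_2)$ and $\partial_{z_2}h(\cdot,z_2)\not\equiv0$. It then applies the area formula to $(t,z_2)\mapsto(t,h(t,z_2))$ together with the diffeomorphism $z\mapsto(z_1,\tilde z_2(z))$. Foliations play no role in the non-existence proof.

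Three smaller points also need fixing:
\begin{itemize}
\item Lower-dimensional strata in $(z,u)$ cannot be discarded, since their $z$-projections may be open. They must be reparametrized as in Lemma \ref{decomposeIntoCellsWithConstantMpMc}.
\item For a given $(z,u)$, the finitely many $t$ where the cinematic determinant vanishes are handled by B\^ocher's theorem and a countable cover in $t$, not by induction on $d$.
\item Moving hypothesis (ii) from $\Gamma$ to a single stratum requires that reused curves in the other cases be \emph{strongly} covered. This is the content of Proposition \ref{breakIntoConstantMpAndMc}\ref{localReuseIffOnGammaI}.
\end{itemize}
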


Given a family of curves $\Gamma$, it may not be immediately obvious whether conditions (i) or (ii) are true. The next two results give a structural criterion for determining whether a family of curves admits Nikodym sets. Before stating these results, we will introduce several ancillary quantities.

\begin{definition}\label{DefnTauAndB}
Let $\gamma\colon I\times W\to\RR$, as in Definition \ref{familyOfPlaneCurves}. For $(z,u)\in W$ and $z=(z_1,z_2)$, define
\begin{equation*}
\begin{split}
\mathfrak{r}_c^\gamma(z, u) & = \operatorname{rank}\{\partial_{z_1}\gamma(\cdot, z, u),\ \partial_{z_2}\gamma(\cdot, z, u),\ \partial_{u_1}\gamma(\cdot, z, u),\ldots, \partial_{u_d}\gamma(\cdot, z, u)\},\\
\mathfrak{r}_p^\gamma(z, u) & = \operatorname{rank}\{\partial_{u_1}\gamma(\cdot, z, u),\ldots, \partial_{u_d}\gamma(\cdot, z, u)\}.
\end{split}
\end{equation*}
In the above, we think of $\partial_{z_1}\gamma(\cdot, z, u), \partial_{z_2}\gamma(\cdot, z, u)$, etc.~as functions of the variable $t$ (with $z$ and $u$ frozen), and $\operatorname{rank}\{f_1(t),\ldots,f_k(t)\}$ denotes the rank of this set of functions over $\RR$. If $d=0$ then $\mathfrak{r}_p^\gamma(z, u)$ is identically 0, and $\mathfrak{r}_c^\gamma(z, u) = \operatorname{rank}\{\partial_{z_1}\gamma(\cdot, z, u),\ \partial_{z_2}\gamma(\cdot, z, u)\}$. We will refer to the quantities $\mathfrak{r}_c^\gamma$ and $\mathfrak{r}_p^\gamma$ as the \emph{curve rank} and \emph{point rank}, respectively.
\end{definition}

\begin{theorem}\label{characterizationOfNikodymSets}
Let $I\subset\RR$ be a bounded closed interval. Let $W\subset\RR^2\times\RR^d$ be non-empty, open, and bounded. Let $\gamma\colon I\times W\to\RR$ be analytic. Let $\Gamma$ be the family of plane curves associated to the defining function $\gamma$. Suppose that $\mathfrak{r}_p^\gamma=m_p$ and $\mathfrak{r}_c^\gamma=m_c$ are constant on $W$. Then the behavior of $\Gamma$ is determined by the quantities $m_p$ and $m_c$ as follows.
\begin{enumerate}[(A)]
    \item\label{CaseMc=Mp+2} $m_c=m_p+2$:  $\Gamma$ does not admit Nikodym sets, and no curve from $\Gamma$ is reused.
    \item \label{CaseMc=mp+1}  $m_c = m_p+1$.
            \begin{enumerate}[(i)]
            \item\label{CaseMp=0} $m_p=0$: 
            $\Gamma$ does not admit Nikodym sets. $\Gamma$ does not have massive curve reuse. For every $(z,u)\in W$, the curve $\beta_{z,u}$ is reused near $(z,u)$. In particular, $\Gamma$ has local curve reuse.  The reused curves are strongly covered by foliations.

            \item\label{CaseMp>0} $m_p>0$:
            $\Gamma$ admits Nikodym sets. $\Gamma$ has local curve reuse, and the locally reused curves are not covered by foliations.
        \end{enumerate}
    \item\label{CaseMp=mc} $m_p=m_c$:
    $\Gamma$ admits Nikodym sets and has massive curve reuse.
\end{enumerate}
\end{theorem}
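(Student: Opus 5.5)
The plan is to derive each case of Theorem \ref{characterizationOfNikodymSets} from Theorem \ref{NikodymIffCurveReuse} and Corollary \ref{corOfBddMaximalFn}, after a normalization step. Since $\mathfrak{r}_p^\gamma=m_p$ is constant on $W$, I would first apply the constant rank theorem (analytic version) to the map $u\mapsto\gamma(\cdot,z,u)$, viewed as a map into a finite dimensional space of functions, locally near each point of $W$. This shows that, after shrinking to small open sets and reparametrizing, $\gamma(t,z,u)=\tilde\gamma(t,z,v(z,u))$ with $v\in\RR^{m_p}$ and $\mathfrak{r}_p^{\tilde\gamma}=m_p$ equal to the number of parameters. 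By Remark \ref{curveReuseIsLocalRemark} I may work on a countable cover by such pieces. The same argument applied to $(z,u)\mapsto\gamma(\cdot,z,u)$ shows that the fibres $\hat\beta$ of the curve map are locally analytic submanifolds of dimension $m_p+2-m_c$, up to the reparametrization.

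\textbf{Case (C).} If $m_c=m_p$, the fibres have dimension $2$. Moreover, $\partial_{z_i}\gamma$ lies in the span of the $\partial_{u_j}\gamma$, so the fibre through $(z,u)$ is transverse to the $u$-directions. Its projection $\beta^*$ therefore contains an open set, which gives massive reuse. Nikodym sets then exist by taking $E$ to be that one curve, exactly as in Example \ref{dumbNikodymExample}.

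\textbf{Case (A).} If $m_c=m_p+2$, the fibres are $0$-dimensional. A reused curve would have $\hat\beta$ containing an analytic arc, since $\hat\beta$ is an analytic set. Differentiating along that arc gives a nontrivial linear relation among $\partial_{z}\gamma,\partial_u\gamma$, which contradicts the rank. For non-existence of Nikodym sets, the $m_p+2$ functions $\partial_{z_1}\gamma,\partial_{z_2}\gamma,\partial_{v_j}\gamma$ are linearly independent analytic functions of $t$. Hence their generalized Wronskian, which is the determinant \eqref{soggeLocalMultiParam} with $d=m_p$, is not identically zero. Its zero set in $I\times W$ is then a proper analytic subvariety. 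I would cover the complement by countably many boxes $I_k\times W_k$ on which \eqref{soggeLocalMultiParam} holds. If $\mathcal H^1(\beta_{z,u}\cap E)>0$, then the same holds for the portion of the curve over some $I_k$ with $(z,u)\in W_k$, because for fixed $(z,u)$ the bad $t$-set is finite. Corollary \ref{corOfBddMaximalFn} applied to each $\gamma|_{I_k\times W_k}$, followed by a countable union, finishes this case.

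\textbf{Case (B).} Here the fibres are $1$-dimensional and project to curves in $z$-space, so every $\beta_{z,u}$ is reused near $(z,u)$.
\begin{itemize}
\item[(i)] If $m_p=0$, locally $\gamma(t,z)=g(t,\phi(z))$ with $\phi$ scalar. Then $\beta^*$ is a level curve of $\phi$, so there is no massive reuse. The curves $\{(t,g(t,s))\}$ form a one-parameter family. Wherever $\partial_sg\neq0$ they locally foliate the plane, so the tangent field $(1,\partial_tg)$, written in these coordinates and cut off smoothly as in the $\gamma_{UP}$ example, gives a vector field covering them. The set where $\partial_sg=0$ meets each curve in a null set, by analyticity and the rank-one condition. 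Countably many such patches give strong covering, and Theorem \ref{NikodymIffCurveReuse} gives no Nikodym sets.
\item[(ii)] If $m_p>0$, I must show that the locally reused curves are not covered by foliations, and then apply Theorem \ref{NikodymIffCurveReuse}. The idea is that through a point $q$ of the plane pass the curves $\beta_{z,u}$ with $\gamma(t_q,z,u)=q_2$. For a fixed $z$, varying $u$ changes $\partial_t\gamma(t_q,z,u)$ nontrivially for generic $t_q$, since $m_p>0$ and the functions are analytic. So the tangent directions at $q$ of curves with base points in a positive measure set form a continuum. Countably many vector fields supply only countably many directions at each point. A Fubini argument over $(q,z)$ should then show that, for each countable family $\{V_i\}$, a positive measure set of $z$ admits a $u$ whose curve is not tangent to any $V_i$ on a set of positive length, i.e.\ $|Q^\gamma|>0$.
\end{itemize}

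I expect the main obstacle to be case (B)(ii), namely making this measure-theoretic non-covering argument rigorous. I would first try to choose $u=u(z)$ measurably so that the direction field of $\beta_{z,u(z)}$ at a fixed $t$ avoids each $V_i$ off a null set.
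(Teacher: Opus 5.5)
Your Case (A) matches the paper: reduce to $m_p=d$, use B\^ocher's theorem so the determinant in \eqref{soggeLocalMultiParam} vanishes at only finitely many $t$ for each $(z,u)$, then apply Corollary \ref{corOfBddMaximalFn} on countably many boxes. So do your Case (C), straightening the $2$-dimensional fibre, and the foliation and no-massive-reuse parts of (B)(i). The genuine gap is in Case (B). There you obtain both Nikodym conclusions by invoking Theorem \ref{NikodymIffCurveReuse}. In the paper, however, that theorem is \emph{deduced from} Theorem \ref{characterizationOfNikodymSets} together with Proposition \ref{breakIntoConstantMpAndMc}, and part (d) of that proposition itself relies on Theorem \ref{characterizationOfNikodymSets}. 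So the argument is circular. In (B)(i) this is easy to repair directly. Write $\gamma(t,z_1,\tilde z_2(z))=h(t,z_2)$, where $\partial_{z_2}h(\cdot,z_2)$ vanishes at only finitely many $t$. For a null set $E$, the area formula for $(t,z_2)\mapsto(t,h(t,z_2))$ shows that the set $Z_2$ of $z_2$ with $|\{t\colon (t,h(t,z_2))\in E\}|>0$ is null. The set of good basepoints then lies in the image of $\RR\times Z_2$ under the local diffeomorphism $(z_1,z_2)\mapsto(z_1,\tilde z_2(z_1,z_2))$, so it is null.

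In (B)(ii), by contrast, existence of Nikodym sets is the hardest part of the theorem. Showing that the locally reused curves are not covered by foliations, which is only a necessary condition, does not construct $E$ and $F$. The missing idea is to exploit curve reuse through a prescribed-projection construction. Normalize so that $\gamma(t,z_1,\tilde z_2(z,u),\tilde u(z,u))=\gamma(t,0,z_2,u)$, and set $\phi^t(z_2,s)=\gamma(t,0,z_2,s,\mathbf{0})$ and $\psi^{z_1}(z_2,s)=\tilde z_2(z_1,z_2,s,\mathbf{0})$.
\begin{itemize}
\item Take a Borel set $G$ of parameters with $|\phi^t(G)|=0$ for a.e.\ $t$ in an interval. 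By Fubini, the union of the curves $\beta_{(0,z_2),(s,\mathbf{0})}$, $(z_2,s)\in G$, over that interval is then null.
\item Suppose also $|\psi^{z_1}(G)|>0$ for a positive-measure set of $z_1$. Each such curve equals $\beta_{(z_1,\psi^{z_1}(z_2,s)),\,\tilde u(z_1,z_2,s,\mathbf{0})}$, so a positive-measure set of basepoints has a curve inside $E$.
\end{itemize}
Such a $G$ is supplied by Proposition \ref{prescribedProjectionsTwoDirections}, the nonlinear Falconer theorem of \cite{CFMMT}. That result needs the normalized gradients of both $\phi^t$ and $\psi^{z_1}$ to rotate nontrivially in the parameter. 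For $\phi$ this follows from the linear independence of $\partial_{z_2}\phi^t,\partial_s\phi^t$ as functions of $t$. When it fails for $\psi$ (so $\tilde z_2$ is independent of $s$), one uses Wisewell's theorem (Corollary \ref{localWisewellThm}) instead.

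Your non-covering sketch is in the right spirit; it corresponds to the paper's Step 3. The rigorous version rests on the same nondegeneracy: the $1$-jet map $(z_2,s)\mapsto(\phi^t,\partial_t\phi^t)$ is a local diffeomorphism for generic $t$. Hence tangency to each $V_i$ at time $t$ cuts out a curve in parameter space, and Fubini gives a single $s_0$. That $s_0$ is then transported by the reuse diffeomorphism, so no measurable selection of $u(z)$ is needed.
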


One of the hypotheses of Theorem \ref{characterizationOfNikodymSets} is that $\mathfrak{r}_p^\gamma$ and $\mathfrak{r}_c^\gamma$ are constant on $W$. The next result allows us to reduce to this situation.

\begin{proposition}\label{breakIntoConstantMpAndMc}
Let $I\subset\RR$ be a bounded closed interval. Let $W\subset\RR^2\times\RR^d$ be a bounded open ball (or more generally, a bounded set of the form $\{ (z,u)\colon f(z,u)<0\}$ where $f$ is analytic). Let $\gamma\colon I\times W\to\RR$ be analytic on a neighbourhood of $I\times\overline W$. Let $\Gamma$ be the family of plane curves associated to the defining function $\gamma$.

Then there is a finite collection of bounded, connected open sets $\tilde W_1,\ldots,\tilde W_N$, with each $\tilde W_i\subset \RR^2\times\RR^{d_i}$ (here $0\leq d_i\leq d$ is an integer) and analytic functions $\tilde \gamma_i\colon I\times \tilde W_i\to\RR$. These objects have the following properties:
\begin{enumerate}[(a)]
\item\label{NikodymIffOnGammaI} $\Gamma$ admits Nikodym sets if and only if the same is true for at least one $\tilde \Gamma_i$.
\item\label{constMcMpOnGammaI} $\mathfrak{r}_p^{\tilde \gamma_i}$ and $\mathfrak{r}_c^{\tilde \gamma_i}$ are constant on $\tilde W_i$.
\item\label{massiveCurveReuseIffOnGammaI} $\Gamma$ has massive curve reuse if and only if the same is true for at least one $\tilde \Gamma_i$.
\item\label{localReuseIffOnGammaI} Suppose that $\Gamma$ does not have massive curve reuse. Then $\Gamma$ has local curve reuse not covered by foliations if and only if the same is true for at least one $\tilde\Gamma_i$.
\end{enumerate}
\end{proposition}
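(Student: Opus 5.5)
The plan is to stratify $W$ by the two rank functions and then reparametrize each stratum so that both ranks become constant. Throughout we use that analytic objects have tame (subanalytic) geometry, since $\gamma$ is analytic on a neighbourhood of $I\times\overline W$ and $W$ is a bounded subanalytic set.

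\textbf{Stratification.} The rank of the family $\{\partial_{z_1}\gamma(\cdot,z,u),\dots,\partial_{u_d}\gamma(\cdot,z,u)\}$ of analytic functions of $t\in I$ equals the rank of a Wronskian-type matrix, namely the matrix of $t$-derivatives of orders $0,\dots,d+1$. By analyticity in $t$ and compactness of $I$, this rank can be detected by finitely many evaluations.

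It follows that $\{\mathfrak r_c^\gamma\le k\}$ and $\{\mathfrak r_p^\gamma\le k\}$ are subanalytic subsets of $W$. Indeed, $\{\mathfrak r\le k\}$ is the set where all $(k+1)$-minors vanish identically in $t$, which is an analytic condition in $(z,u)$ after integrating squares over $I$.

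We then take a finite subanalytic stratification of $W$ into connected analytic submanifolds $S_j$ on which both ranks are constant. The top-dimensional open strata carry constant ranks. The lower-dimensional strata have measure zero in $W$, but they might still project onto positive-measure sets of $z$, so they cannot simply be discarded.

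\textbf{Reparametrization of each stratum.} Each stratum $S_j$ of dimension $\ge 2$ whose projection $\pi_z(S_j)$ has positive measure is handled as follows.
\begin{itemize}
\item Refining the stratification, we may assume that $\pi_z|_{S_j}$ has constant rank 2. Strata where the rank is $<2$ project to null sets by Sard's theorem, so they contribute nothing to Nikodym sets (which need $|F|>0$) and nothing to massive or local reuse.
\item Locally write $S_j$ as a graph $\{(z,\phi(z,v)) : (z,v)\in\tilde W\}$ with $\tilde W\subset\RR^2\times\RR^{d_j}$ open, $d_j\le d$, and $\phi$ analytic. Set $\tilde\gamma(t,z,v)=\gamma(t,z,\phi(z,v))$.
\item Cover by finitely many such charts, using compactness of $\overline W$ and the finiteness built into subanalytic stratifications, so that the chart domains are bounded connected open sets.
\item Recompute the ranks of $\tilde\gamma$ by the chain rule. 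They need not equal those of $\gamma$, so we stratify again. This iteration terminates because $d_j$ drops whenever the stratum is proper, and the top stratum at each stage has constant ranks after refinement.
\end{itemize}
The curves of $\tilde\Gamma_i$ are exactly the curves $\beta_{z,u}$ with $(z,u)\in S_j$.

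\textbf{Properties (a)--(d).}
\begin{itemize}
\item \textbf{(a).} If $\Gamma$ admits Nikodym sets with witness set $F$, the choice $z\mapsto u(z)$ can be taken measurable. By finite subadditivity, some stratum contains $(z,u(z))$ for a positive-measure set of $z$, and the same null set $E$ then works for that $\tilde\Gamma_i$. The converse is immediate, since the curves of $\tilde\Gamma_i$ are curves of $\Gamma$.
\item \textbf{(b).} This holds by construction.
\item \textbf{(c).} Here $\beta^*$ for $\Gamma$ is the finite union of the sets $\beta^*$ over the charts, so it has positive measure if and only if one of them does.
\end{itemize}

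\textbf{The main obstacle is (d),} because local reuse near $(z,u)$ is not obviously inherited by a stratum. The fibre $\hat\beta$ is an analytic set; near $(z,u)$ its projection could be one-dimensional only through points lying in a different stratum than $(z,u)$. I would handle this as follows.
\begin{itemize}
\item Use the structure of $\hat\beta$: it is a subanalytic set, and $\hat\beta\cap S_j$ for the finitely many $j$ covers it. If $\pi_z(\hat\beta\cap W^\circ)$ has positive $\mathcal H^1$ measure for every neighbourhood $W^\circ$, then some stratum $S_j$ with $(z,u)\in\overline{S_j}$ has $\pi_z(\hat\beta\cap S_j\cap W^\circ)$ one-dimensional for all $W^\circ$.
\item Choose the stratification to be compatible with the family of fibres (a Whitney/curve-selection argument). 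Then, off a $z$-null set, the relevant point can be taken in $S_j$ itself.
\item The covering condition refers only to the curves $\beta$ and the vector fields, not to parameters, so it transfers directly.
\item The hypothesis of no massive reuse is used to guarantee that each curve's fibre has projection of dimension at most 1. This prevents exceptional fibres from contributing a positive-measure set of $z$ through lower strata in a way that is not visible in any single chart.
\end{itemize}
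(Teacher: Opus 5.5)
\textbf{There is a genuine gap, in part (d).}

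Your construction of the pieces matches the paper's Lemma~\ref{decomposeIntoCellsWithConstantMpMc}: stratify by the two ranks, reparametrize each stratum with positive $z$-projection as $(z,v)\mapsto(z,\phi(z,v))$, and recurse on the fibre dimension. Parts (a)--(c) follow from this. To get finitely many bounded connected charts, use the analytic cell decomposition, which parametrizes each $(1,1,j_1,\dots,j_d)$-cell $C$ globally over $\pi_z(C)\times(0,1)^{d_i}$. Compactness of $\overline W$ does not control local graph charts on a non-compact stratum.

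The gap is this. If $\tilde\Gamma_i$ does not have local curve reuse not covered by foliations, you only know that some countable $\mathcal V_i$ satisfies $|Q^{\tilde\gamma_i}(\tilde W_i,\mathcal V_i)|=0$. That controls the \emph{base points} of uncovered locally reused curves, not the curves themselves. When $\beta_{z,u}$ is locally reused near $(z,u)$ in $W$, that reuse is in general only visible inside a piece at other parameters $(z',u')$ with $z'\neq z$. Knowing that such $z'$ form a null set says nothing about the $z$'s. So ``the covering condition transfers directly'' is exactly the unproved step.

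Your proposed remedy is also false: you suggest choosing the stratification compatibly with the fibres, so that off a $z$-null set the reuse is seen in the stratum containing $(z,u)$. Take $d=1$ and $\gamma(t,z,u)=(z_1-u^3)t+z_2t^2$.
\begin{itemize}
\item Here $\mathfrak r_c^\gamma\equiv 2$, while $\mathfrak r_p^\gamma=1$ for $u\neq 0$ and $\mathfrak r_p^\gamma=0$ on $\{u=0\}$. So $\{u=0\}$ must be split off.
\item On $\{u=0\}$ the reparametrized family $z_1t+z_2t^2$ has singleton fibres.
\item But $\hat\beta_{z,0}=\{(z_1+s^3,z_2,s)\colon s\in\RR\}\cap W$ projects onto a horizontal segment.
\end{itemize}
Hence for every $z$ in an open set, $\beta_{z,0}$ is locally reused near $(z,0)$ in $W$ but never within its own stratum.

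\textbf{The missing idea} is to upgrade the hypothesis on each piece to a per-curve statement using the classification. Theorem~\ref{characterizationOfNikodymSets} does not depend on this proposition. A piece with no massive reuse is not in Case (C); this is where the hypothesis of (d) enters, not via fibre-dimension bounds. A piece with no local reuse not covered by foliations is not in Case (B)(ii). So it must be in Case (A) or (B)(i). Hence there is a countable $\mathcal V_i$ covering \emph{every} locally reused curve of $\tilde\Gamma_i$. Covering is then a property of the curve alone, so it passes from $(z',u')$ back to $(z,u)$.

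\textbf{A second, smaller error.} You claim that strata with null $z$-projection contribute nothing to local reuse. That is not right: such a stratum can supply the whole one-dimensional set $\pi_z(\hat\beta\cap W^\circ)$. The paper handles this by removing $B=\bigcup_j\overline{\pi_z(X_j)}$, which is null by definability. For $z\notin B$, a small neighbourhood of $(z,u)$ lies in the union of the good pieces, so some piece $W_i$ has $\pi_z(\hat\beta_{z,u}\cap W_i)$ one-dimensional. By definability this gives a point $(z',u')\in\hat\beta_{z,u}\cap W_i$ where the same curve is locally reused within $W_i$. Strong covering then finishes the argument.
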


Theorem \ref{NikodymIffCurveReuse} will follow from combining Theorem \ref{characterizationOfNikodymSets} and Proposition \ref{breakIntoConstantMpAndMc}.

\subsection{Structure of the paper}
In Section \ref{maximalFnSection} we will prove Theorem \ref{maximalFnBound}. This is one of the main new contributions in this paper\footnote{A version of this proof first appeared in the unpublished manuscript \cite{CGY23}, written by a subset of the authors.}. Theorem \ref{maximalFnBound} follows from a ``high frequency decay'' estimate, which in turn follows from a local smoothing estimate for the averaging operator $A^\gamma$. This is proved using decoupling.

In Section \ref{n=1Case} we will use a theorem of Wisewell to establish the existence of Nikodym sets in the case $n=1$. In Section \ref{curveReuseDaviesSection} we will introduce a nonlinear version of Falconer's prescribed projection theorem that was developed by Fraser, McDonald, Meyer, Taylor, and the first author. 

In Section \ref{semiAnalyticGeomSection} we will introduce tools from tame geometry that will allow us to break analytic and sub-analytic sets into well-behaved pieces. We prove Proposition \ref{breakIntoConstantMpAndMc} in this section. In Section \ref{curveRankSection} we will show how to reduce Theorem \ref{characterizationOfNikodymSets} to the special case where $\mathfrak{r}_p^\gamma=d$, and in Section  \ref{ProofOfThmCharacterizationOfNikodymSetsSection} we will prove Theorem \ref{characterizationOfNikodymSets}.

\subsection{Acknowledgements}
Alan Chang's research was partially supported by U.S.\ National Science Foundation grant DMS-2247233. Shaoming Guo's research is partly supported by NSF-2044828, and by the Nankai Zhide Foundation, NSFC Grant No. 12426204, the New Cornerstone Science Foundation, and the Fundamental Research Funds for the Central Universities No. 100-63263092. Minxing Shen's research was supported by the National Natural Science Foundation of China, grant number 12501125. Tongou Yang's research was supported in part by the National Key R\&D Program of China (No. 2024YFA1015400). Joshua Zahl's research was supported by Discovery and Alliance grants from the Natural Sciences and Engineering Research Council of Canada; by the Fundamental Research Funds for the
Central Universities (Project No. 100-63253272); and by the Nankai Zhide Foundation. This project was started when Shaoming Guo visited Malabika Pramanik and Joshua Zahl at UBC in 2022; he would like to thank them for the invitation and hospitality.

\subsection{AI usage}
All ideas and arguments in this manuscript are due to the authors. AI (specifically ChatGPT Astra) was used in the editing process to proof-read the arguments. In several instances AI identified a mathematical error in the arguments (in addition to numerous typos), which the authors corrected. All text was written by the authors, with the exception of a few specific wording suggestions proposed by AI, plus routine AI-assisted find/replace operations.


\section{Maximal functions, local smoothing, and multi-parameter cinematic curvature}\label{maximalFnSection}

Our goal in this section is to prove Theorem \ref{maximalFnBound}. The main technical step is to prove a local smoothing estimate for a smoothly cutoff version of the operator $A^\gamma$ from Definition \ref{averagingAndMaximalOperators}. In the definition below, we consider averaging operators whose domain is (ostensibly) all of $\RR\times\RR^2\times\RR^d$, but we restrict the domain using a smooth cutoff function.
\begin{definition}\label{mollifiedAGamma}
Let $\gamma\colon\RR\times\RR^2\times\RR^d\to\RR$ be smooth, and let $\chi\colon \RR\times\RR^2\times\RR^d\to\RR$ be a smooth bump function supported near the origin. Let $f\colon\RR^2\to\CC$. For $z\in\RR^2,\ u\in\RR^d$, define the averaging operator
\[
A^\gamma_\chi f(z,u) = \int_{\RR} f(t, \gamma(t, z,u))\chi(t,z,u)dt.
\]
\end{definition}
\begin{theorem}\label{averagingOp}
Let $\gamma\colon\RR\times\RR^2\times\RR^d\to\RR$ be smooth and suppose that $\gamma$ satisfies the cinematic curvature condition \eqref{soggeLocalMultiParam} at the origin. Then the following holds for all $\eps>0$ and all smooth bump functions $\chi\colon\RR\times\RR^2\times\RR^d\to\RR$ with support contained in a sufficiently small neighbourhood of the origin.

Let $f\colon \RR^2\to\CC$ and let $P_kf$ denote the Littlewood-Paley projection of $f$ to frequencies of scale $\sim 2^k$. Then for all $p\ge (d+1)(d+2)$, we have
\begin{equation}\label{260920_TY}
\Vert A^\gamma_\chi P_k f\Vert_{L^p(\RR^2\times\RR^d)}\lesssim  2^{-(\frac{d+1}{p}-\eps) k}\Vert f\Vert_{L^p(\RR^2)},
\end{equation}
where the implicit constant depends only on $p,d,\gamma,\eps$, and $\chi$. 
\end{theorem}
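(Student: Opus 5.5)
Write $F=A^\gamma_\chi P_kf$, viewed as a function on $\RR^2\times\RR^d$, and set $\lambda=2^k$. The plan is to realise $F$ as a Fourier integral operator whose canonical relation is governed by the cinematic curvature condition \eqref{soggeLocalMultiParam}, and then to prove \eqref{260920_TY} by $\ell^p$ decoupling for a nondegenerate curve.

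\emph{Step 1 (phase-space form).} Write $f(x)=\int \hat f(\xi)e^{ix\cdot\xi}d\xi$. Then
\[
A^\gamma_\chi P_kf(z,u)=\int\!\!\int e^{i(t\xi_1+\gamma(t,z,u)\xi_2)}\chi(t,z,u)\,\psi_k(\xi)\hat f(\xi)\,dt\,d\xi .
\]
On the part where $|\xi_2|\ll|\xi_1|$ the $t$-integral has no critical point. Integrating by parts in $t$ makes that part $O(\lambda^{-N})$, so we may assume $|\xi_2|\sim\lambda$.

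\emph{Step 2 (geometry of the canonical relation).} Fix $(z,u)$, write $w=(z,u)\in\RR^{d+2}$, and consider the map
\[
w\mapsto (\gamma,\partial_t\gamma,\dots,\partial_t^{d+1}\gamma)(t,w).
\]
By \eqref{soggeLocalMultiParam} this map is a local diffeomorphism for each $t$. Changing variables, we can assume the jet of $\gamma$ in $t$ at $t=t_0$ is the coordinate $w$ itself. The output frequencies in $w$ are $\xi_2\nabla_w\gamma(t,w)$, evaluated at the stationary point $t$ where $\xi_1+\xi_2\partial_t\gamma=0$. As $t$ varies, $\nabla_w\gamma(t,w)$ traces a curve in $\RR^{d+2}$. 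Up to the cone direction $\xi_2$, it is a curve in $\RR^{d+1}$ whose first $d+1$ derivatives are linearly independent, again by \eqref{soggeLocalMultiParam}. So $F$ is, locally, a superposition of waves whose $(z,u)$-frequencies lie in a $\lambda$-dilate of a nondegenerate conic curve in $\RR^{d+2}$: the cone over a nondegenerate curve in $\RR^{d+1}$. This is the variable-coefficient analogue of the light cone in the circle case $d=1$.

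\emph{Step 3 (decoupling).} Partition the $t$-range, equivalently the angular variable $\xi_1/\xi_2$, into arcs of length $\lambda^{-1/(d+1)}$. At that scale each piece $F_\theta$ is essentially a plank, with the curve flat to within $\lambda^{-1}$ after rescaling. Apply variable-coefficient $\ell^p$ decoupling for cones over nondegenerate curves. This follows from Bourgain--Demeter--Guth \cite{BDG16} via the cone-generalisation and the Beltran--Hickman--Sogge induction-on-scales argument for variable coefficients. It gives, for $p\ge (d+1)(d+2)$,
\[
\|F\|_p\lesssim_\eps \lambda^{\eps}\,\lambda^{\frac1{d+1}(\frac12-\frac1p)}\Big(\sum_\theta\|F_\theta\|_p^p\Big)^{1/p},
\]
where $\lambda^{1/(d+1)}$ is the number of pieces. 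For each piece, stationary phase in $t$ over an interval of length $\lambda^{-1/(d+1)}$ gives two bounds: $\|F_\theta\|_\infty\lesssim\lambda^{-1/(d+1)}\|f\|_\infty$, and an $L^2$ bound $\|F_\theta\|_2\lesssim \lambda^{-1/2}\|f_\theta\|_2$ from the $(t,z)$ nondegeneracy (the usual $L^2$ Sobolev gain of averages over curved curves). Interpolate these with orthogonality of the $f_\theta$ (frequency sectors of $f$) and use $\sum_\theta\|f_\theta\|_p^p\lesssim\|f\|_p^p$ for $p\ge2$. This yields $\|F\|_p\lesssim\lambda^{-(d+1)/p+\eps}\|f\|_p$ exactly at $p=(d+1)(d+2)$, where the decoupling exponent is critical. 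The case $p=\infty$ follows trivially from the first bound (slightly better), and interpolation gives all larger $p$.

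\emph{Main obstacle.} The hardest step is Step 3's decoupling input. We need a variable-coefficient version of decoupling for the cone over a nondegenerate curve in dimension $d+2$, at the sharp exponent $(d+1)(d+2)$. The plan is:
\begin{enumerate}[(a)]
\item Localise in $(z,u)$ to balls of radius $\lambda^{-1/2}$ or smaller, on which the phase is approximated by its Taylor polynomial. There the constant-coefficient cone decoupling applies, via Bourgain--Demeter--Guth plus the Pramanik--Seeger iteration lifting curve decoupling to the cone.
\item Run the Beltran--Hickman--Sogge parabolic-rescaling induction. The cinematic condition is invariant under the rescalings, so the constants close up to $\lambda^\eps$.
\end{enumerate}
Checking that the rescaled phases stay in a compact class satisfying \eqref{soggeLocalMultiParam} uniformly is where I expect the real work to lie.
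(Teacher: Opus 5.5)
There is a genuine gap in the numerology of Step 3. You decouple once, into $N=\lambda^{1/(d+1)}$ pieces. You then bound the pieces by interpolating $\sup_\theta\Vert F_\theta\Vert_\infty\lesssim\lambda^{-1/(d+1)}\Vert f\Vert_\infty$ with $(\sum_\theta\Vert F_\theta\Vert_2^2)^{1/2}\lesssim\lambda^{-1/2}\Vert f\Vert_2$. At $p=(d+1)(d+2)$ this gives
\[
\Vert F\Vert_p\lesssim_\eps \lambda^{\eps}\,N^{\frac12-\frac1p}\cdot N^{-(1-\frac2p)}\lambda^{-\frac1p}\Vert f\Vert_p=\lambda^{\eps-\frac{1}{2(d+1)}+\frac{1}{(d+1)p}-\frac1p}\Vert f\Vert_p .
\]
This exponent exceeds the required $-\frac{d+1}{p}$ by $\frac1p\bigl(\frac{d-2}{2}+\frac{1}{d+1}\bigr)$, which vanishes only for $d=1$. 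For $d=2$, $p=12$ you get $\lambda^{-2/9+\eps}$ instead of $\lambda^{-1/4+\eps}$, and interpolating with the trivial $L^\infty$ bound cannot repair this. Using the correct $\ell^p$ decoupling constant $N^{1-\frac1p-\frac{(d+1)(d+2)}{2p}}$ for larger $p$ gives the same deficit at every $p$. The problem is the scale, not the constants. On arcs of length $\lambda^{-1/(d+1)}\gg\lambda^{-1/2}$, the $L^\infty$ bound $\lambda^{-1/(d+1)}$ is much weaker than the $L^2$ bound $\lambda^{-1/2}$, so the interpolation is lossy. Your computation is correct only when $d=1$, where $\lambda^{-1/(d+1)}=\lambda^{-1/2}$.

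What is missing is the paper's lower-degree decoupling iteration, which takes the pieces all the way down to length $\lambda^{-1/2}$. For each $j=d,\dots,2$, take an interval $I_j$ of length $\lambda^{-1/(j+1)}$ and do the following:
\begin{enumerate}
\item Replace $\gamma$ by its degree-$j$ Taylor polynomial in $t$; the error in $\lambda\gamma$ is $O(1)$.
\item Choose a nonvanishing $(j+1)\times(j+1)$ minor of the first $j+1$ rows of the cinematic matrix. This is possible because the full matrix is invertible.
\item Freeze the remaining $d+1-j$ parameters and apply Lemma \ref{d-decoupling} in dimension $j+1$, after rescaling $I_j$ to unit length.
\end{enumerate}
This splits $I_j$ into intervals of length $\lambda^{-1/j}$ at cost $\lambda^{(\frac1j-\frac1{j+1})(1-\frac1p-\frac{j(j+1)}{2p})+\eps}$, which is valid since $p\ge(d+1)(d+2)\ge j(j+1)$. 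Together with the top-degree step, the total decoupling constant is $\lambda^{\frac12-\frac{d+1}{p}+\eps}$. At length $\lambda^{-1/2}$ the $L^\infty$ and $L^2$ bounds on the pieces both equal $\lambda^{-1/2}$. Interpolation then gives $(\sum_{I_1}\Vert F_{I_1}\Vert_p^p)^{1/p}\lesssim\lambda^{-1/2}\Vert f\Vert_p$ with no loss, and this yields \eqref{260920_TY}.

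Two smaller points. First, the variable-coefficient decoupling you flag as the main obstacle is a black box in the paper (from \cite{BHS20}, \cite{LLO25}), so the real work is not there. Second, restricting $t$ to an interval is the same as multiplying $f$ by a cutoff in $x_1$, so the orthogonality you need is trivial. Frequency sectors of $f$, by contrast, do not correspond to fixed $t$-intervals uniformly in $(z,u)$ in the variable-coefficient setting.
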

An earlier version of this theorem appeared in the unpublished manuscript \cite{CGY23}. The argument presented here substantially simplifies that earlier proof. The key observation is that, once the maximal operator is written in the geometric form
$$
(t,\gamma(t,z,u)),
$$
the relevant decoupling structure is already visible in the original parametrization. In particular, it is unnecessary to first pass to the dual formulation and then perform a stationary-phase analysis, as is done in earlier approaches, for example in the work of \cite{MSS92}, \cite{LLO25} and \cite{CGY23}.

Next we introduce a smoothly localized version of the operator $M^\gamma$ from Definition \ref{averagingAndMaximalOperators}. 
\begin{definition}\label{mollifiedMGamma}
Let $\gamma\colon\RR\times\RR^2\times\RR^d\to\RR$ be smooth, and let $\chi\colon \RR\times\RR^2\times\RR^d\to\RR$ be a smooth bump function supported near the origin. Let $f\colon\RR^2\to\CC$. For $z\in\RR^2$, define the maximal operator
\[
M^\gamma_\chi f(z) = \sup_u |A^\gamma_\chi f(z,u)|.
\]
\end{definition}
Note that
\[
\Vert M^\gamma_\chi  P_kf(z)\Vert_{L^p_{z}} =\Vert A^\gamma_\chi P_kf(z,u)\Vert_{L^p_{z}(L^\infty_u)}, 
\]
where $L^p_{z}$ denotes $L^p(\RR^2)$ in the variable $z\in\RR^2$, and $L^\infty_u$ denotes $L^\infty(\RR^d)$ in the variable $u$. Thus
by Sobolev embedding, Theorem \ref{averagingOp} has the following consequence.

\begin{corollary}\label{maximalFnBdsCor}
Let $\gamma\colon\RR\times\RR^2\times\RR^d\to\RR$ be smooth and suppose that $\gamma$ satisfies the cinematic curvature condition \eqref{soggeLocalMultiParam} at the origin. Then the following holds for all $\eps>0$ and all smooth bump functions $\chi\colon\RR\times\RR^2\times\RR^d\to \R$ with support contained in a sufficiently small neighbourhood of the origin.

Let $f\colon \RR^2\to\CC$ and let $P_kf$ denote the Littlewood-Paley projection of $f$ to frequencies of scale $\sim 2^k$. Then for all $p\ge(d+1)(d+2)$, we have
\begin{equation}\label{MGammaChiHighFrequencyDecay}
\Vert M^\gamma_\chi P_k f\Vert_{L^p(\RR^2)}\lesssim  2^{-(\frac{1}{p}-\eps) k}\Vert f\Vert_{L^p(\RR^2)},
\end{equation}
where the implicit constant depends only on $p,d,\gamma, \eps$, and $\chi$. 
\end{corollary}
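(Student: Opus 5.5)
The plan is to deduce \eqref{MGammaChiHighFrequencyDecay} from Theorem \ref{averagingOp} by a Sobolev embedding in the $u$ variable, losing exactly $d/p$ derivatives in $u$. Each $u$-derivative costs a factor $2^k$, so the loss is $2^{dk/p}$. Starting from the gain $2^{-(d+1)k/p}$ in \eqref{260920_TY}, this leaves the net decay $2^{-k/p}$, as claimed.

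Fix $z$ and apply the standard one-dimensional-type Sobolev inequality on $\RR^d$, valid for compactly supported (in $u$) functions $g$:
\[
\|g\|_{L^\infty(\RR^d)}^p \lesssim \|g\|_{L^p(\RR^d)}^{p-d}\,\|\nabla_u^{\le d} g\|_{L^p(\RR^d)}^{d},
\]
or more crudely the fractional version $\|g\|_{L^\infty}\lesssim \|g\|_{W^{s,p}}$ with $s=d/p+\eps$. It is cleanest to use an interpolated form: for $g$ supported in a fixed ball,
\[
\|g\|_{L^\infty_u}\lesssim \|g\|_{L^p_u}^{1-d/p}\,\|g\|_{W^{1,p}_u}^{d/p}
\]
when $p>d$, which holds since $p\ge (d+1)(d+2)>d$. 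We take $g=A^\gamma_\chi P_kf(z,\cdot)$, which is supported in $u$ because of $\chi$. Raising to the $p$-th power, integrating in $z$, and applying Hölder in $z$ gives
\[
\|M^\gamma_\chi P_kf\|_{L^p}\lesssim \|A^\gamma_\chi P_kf\|_{L^p_{z,u}}^{1-d/p}\;\|\nabla_u A^\gamma_\chi P_kf\|_{L^p_{z,u}}^{d/p} + \|A^\gamma_\chi P_kf\|_{L^p_{z,u}}.
\]

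For the $u$-derivative, differentiate under the integral:
\[
\partial_{u_j}A^\gamma_\chi P_kf(z,u)=\int (\partial_2 P_kf)(t,\gamma)\,\partial_{u_j}\gamma\,\chi\,dt+\int P_kf(t,\gamma)\,\partial_{u_j}\chi\,dt.
\]
The second term is again an averaging operator with bump $\partial_{u_j}\chi$, supported in the same neighbourhood. For the first term, write $\partial_2 P_k f = 2^k \tilde P_k f$, where $\tilde P_k$ is a Littlewood–Paley-type projection with a modified symbol, bounded on $L^p$ uniformly in $k$. This term is then $2^k$ times an averaging operator with bump $\chi\,\partial_{u_j}\gamma$, applied to $\tilde P_k f$. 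Theorem \ref{averagingOp} applies to both terms with these new smooth bumps, since its proof only uses that $\tilde P_k f$ has frequencies $\sim 2^k$. Alternatively, one can write $\tilde P_kf=\tilde P_k(\sum_{|k'-k|\le 2}P_{k'}f)$ and apply the theorem to $P_{k'}(\tilde P_k f)$. Hence
\[
\|\nabla_u A^\gamma_\chi P_kf\|_{L^p}\lesssim 2^{k}\,2^{-(\frac{d+1}{p}-\eps)k}\|f\|_{L^p}.
\]

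Inserting both bounds into the interpolated Sobolev estimate yields
\[
2^{-(\frac{d+1}{p}-\eps)k}\,2^{dk/p}\|f\|_p=2^{-(\frac1p-\eps)k}\|f\|_p.
\]
The only delicate point is this bookkeeping: checking that the modified bumps $\chi\partial_{u_j}\gamma$ and $\partial_{u_j}\chi$ stay within the admissible small support, and that the modified projection $\tilde P_k$ is still covered by Theorem \ref{averagingOp}. I expect this to be routine.
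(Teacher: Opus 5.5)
Your proposal is correct and follows the paper's route: the paper writes $\Vert M^\gamma_\chi P_kf\Vert_{L^p_z}=\Vert A^\gamma_\chi P_kf\Vert_{L^p_z(L^\infty_u)}$ and then cites Sobolev embedding in $u$ on top of Theorem \ref{averagingOp}, with no further detail. Your argument supplies that detail: the Gagliardo--Nirenberg interpolation in $u$ (valid since $p>d$), and the fact that each $u$-derivative costs $2^k$, handled via the modified projection $\tilde P_k$ and the new bumps $\chi\,\partial_{u_j}\gamma$ and $\partial_{u_j}\chi$.
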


In Section \ref{interpolationSubSection} we will show how Corollary \ref{maximalFnBdsCor} can be combined with a geometric argument by the sixth author to obtain Theorem \ref{maximalFnBound}.


\subsection{Reductions}

    We fix a smooth bump function $\chi$ and a smooth curve $\gamma$ satisfying the  cinematic curvature condition, denoting $A_{\chi}^\gamma$ simply by $A$. Denote
    \begin{align}
       & \lambda=2^k,\\
       & \widehat{g}(\bxi):=\lambda^2 \widehat{f}(\lambda \bxi),
    \end{align}
    where $\bxi=(\xi_1,\xi_2)$. Then it's equivalent to prove
    \begin{align}\label{Aim}
         \Vert A_{\lambda}g \Vert_{L^p(\R^2\times \R^d)}  \lesim_{\varepsilon} \lambda^{-\frac{d+3}{p} +\varepsilon} \Vert g \Vert_{L^p(\R^2)}
    \end{align}
    for all $g$ with $\supp\, \widehat{g} \subset \{\bxi: |\bxi |\simeq 1 \}$ and $p\ge (d+1)(d+2)$, where we denote
    \begin{align}
        A_\lambda g(z,u):=\int_{\R^2} \int_{\R} \widehat{g}(\bxi) e^{i\lambda\Phi(t,z,u;\bxi)} \chi(t,z,u)dtd\bxi 
    \end{align}
    and
    \begin{align}
        \Phi(t,z,u;\bxi)=t\xi_1+\gamma(t,z,u) \xi_2.
    \end{align}
    We can decompose $A_{\lambda}g$ as
    \begin{align}
        A_{\lambda}g(z,u)=A_{\lambda}^{Main}g(z,u)+A_{\lambda}^{E}g(z,u),
    \end{align}
    where 
    \begin{align}
        A_{\lambda}^{Main}g(z,u):=\int_{\R^2} \int_{\R} \widehat{g}(\bxi) e^{i\lambda\Phi(t,z,u;\bxi)} \chi(t,z,u)\beta(\xi_2) dtd\bxi
    \end{align}
    and $\beta(\xi_2)$ is a smooth bump function supported on $|\xi_2|\simeq 1$. If $|\xi_2|\ll 1$, then $|\bxi|\simeq 1$ implies $|\xi_1|\simeq 1$, hence
    \begin{align}
        |\partial_t \Phi(t,z,u;\bxi)|\simeq |\xi_1|\simeq 1.
    \end{align}
    By integration by parts, we have 
    \begin{align}
        |m_\lambda(z,u;\bxi)|,\ |\partial_\xi^\alpha m_\lambda(z,u;\bxi)|\lesssim_N \lambda^{-N}
    \end{align}
    for all large integer $N>0$ and all multi-indices $\alpha\in \mathbb{Z}_+^2$, where 
    \begin{align}
        m_\lambda(z,u;\bxi):= \int_{\R}e^{i\lambda\Phi(t,z,u;\bxi)} \chi(t,z,u)dt.
    \end{align}
    Then it's easy to see for any $N>0$,
    \begin{align}
        \Vert A_{\lambda}^Eg \Vert_{L^p(\R^2\times \R^d)}  \lesssim_N \lambda^{-N} \Vert g \Vert_{L^p(\R^2)}.
    \end{align}
Denote
\begin{align}
    h_\lambda(t,\xi_2):=\int_{\R} \widehat{g}(\xi_1,\xi_2)e^{i\lambda t\xi_1} d\xi_1,
\end{align}
now we write the main term $A_{\lambda}^{Main}g$ as 
\begin{align}
    A_{\lambda}^{Main}g(z,u)=\int_{\R^2} e^{i\lambda \gamma(t,z,u) \xi_2} h_\lambda(t,\xi_2) \chi(t,z,u)\beta(\xi_2) dtd\xi_2.
\end{align}

\subsection{Variable coefficient decoupling  at degree \texorpdfstring{$d$}{}}

We will use the following decoupling inequality. The case $d=1$ is a special case of Theorem 1.4 of \cite{BHS20}. The general
case $d\ge 2$ can be proven similarly, by combining the bootstrapping argument in Pramanik and Seeger \cite{PS07} and the decoupling inequalities of Bourgain, Demeter and Guth \cite{BDG16}. For a detailed proof, see \cite{LLO25}, Theorem 3.2 and Appendix A (NB: the analogue of ``$d$'' in \cite{LLO25}  is $d+1$ in our setup).
\begin{lemma}\label{d-decoupling}
Denote
\begin{align}
    \mathbb{D}= (-1,1)\times  \mathbb{B}^{d+2}(0,2)\times (1/2,2).
\end{align}
Let $\Psi: (-1,1) \times \mathbb{B}^{d+2}(0,2) \to \mathbb{R} $ be a smooth function and $A$ be a smooth function with $\supp\, A \subset \mathbb{D}$. Let $\lambda \ge 1$, $w\in \mathbb{B}^{d+2}(0,2)$, and $g$ be a function on $(-1,1)\times (1/2,2)$. We consider the extension operator
\begin{align}
    E_{\lambda} g(w):=\int_{\R^2}  e^{i\lambda \eta \Psi(t,w)} A(t,w;\eta)  g(t,\eta) dt d\eta .
\end{align}
If we have the non-degenerate condition:
\begin{align}\label{260719le1-17}
    \rank D_w (\mathcal{T}(\Psi))= d+2,
\end{align}
where 
\begin{align}
    \mathcal{T}(\Psi):= (\Psi(t,w),\partial_t\Psi(t,w),\cdots,\partial_t^{d+1}\Psi(t,w)),\label{eqn:rank_Lemma_TY}
\end{align}
then for any $\varepsilon>0$ and $N>0$ and for any $p\ge (d+1)(d+2)$, we have
\begin{align}
    \Vert E_{\lambda}g \Vert_{L^p} \lesim_\varepsilon \lambda^{\frac{1}{d+1}(1-\frac{1}{p}-\frac{(d+1)(d+2)}{2p})+\varepsilon}\bigg(\sum_{|I_d|=\lambda^{-\frac{1}{d+1}}} \Vert E_{\lambda}g_{I_d} \Vert_{L^p(\R^2\times \R^d)}^p \bigg)^{\frac{1}{p}} +\lambda^{-N} \Vert g\Vert_2.\label{eqn:decoupling_TY}
\end{align}
Here the sum is over $\{I_d\}$, a partition of $\R$, and $g_{I_d}(t,\eta):= g(t,\eta)\chi_{I_d}(t)$ with $\chi_{I_d}(t)$ a smooth cut-off of $I_d$.

\end{lemma}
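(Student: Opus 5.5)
We would follow the Pramanik--Seeger iteration scheme, with the Bourgain--Demeter--Guth decoupling theorem for nondegenerate curves as the constant-coefficient input. First we localize. Partition unity in $(t,w)$ into pieces of small fixed size $c$. By \eqref{260719le1-17} and the inverse function theorem, on each piece the map $w\mapsto \mathcal{T}(\Psi)(t_0,w)$ is a local diffeomorphism of $\mathbb{B}^{d+2}$ for $t_0$ in the piece. After a smooth change of the $w$ variables (harmless in $L^p$) we may then write
\[
\Psi(t,w)=\sum_{j=0}^{d+1} w_j\frac{(t-t_0)^j}{j!}+R(t,w),\qquad R=O(|t-t_0|^{d+2}),
\]
with $R$ smooth and all bounds uniform. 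The phase $\eta\Psi(t,w)$ is therefore, to leading order, $\eta\,\langle w,\gamma_0(t)\rangle$, where $\gamma_0(t)=(1,t,\dots,t^{d+1}/(d+1)!)$. Up to the extra parameter $\eta$ and the $O(|t-t_0|^{d+2})$ error, this is the extension operator for the cone over the moment curve in $\RR^{d+1}$ (projectivized), sitting in $\RR^{d+2}$.

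Next comes the constant-coefficient step and the induction on scales. Fix a small $\sigma>0$. On a spatial ball $B$ of radius $\lambda^{1-\sigma}$ in the rescaled variables, Taylor-expand the phase about the center of $B$. The error in replacing $\lambda\eta\Psi$ by its linearization is controlled once the $t$-intervals have length $\lambda^{-\sigma/(d+1)}$-ish; the nonlinear remainder is absorbed into the amplitude, and tails are bounded by $\lambda^{-N}\Vert g\Vert_2$ through nonstationary phase. On such a ball we apply the BDG $\ell^2$ decoupling for the cone over a nondegenerate curve in $\RR^{d+1}$; the cone version follows from the curve version by the standard Bourgain/Fubini reduction. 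For $p\ge (d+1)(d+2)$ the $\ell^2$ constant with $N$ caps is $N^{\frac12-\frac{(d+1)(d+2)}{2p}+\eps}$, and converting $\ell^2$ to $\ell^p$ by H\"older costs $N^{\frac12-\frac1p}$. With final cap length $\lambda^{-1/(d+1)}$, i.e.\ $N=\lambda^{1/(d+1)}$, this gives exactly the exponent $\frac{1}{d+1}\bigl(1-\frac1p-\frac{(d+1)(d+2)}{2p}\bigr)$ in \eqref{eqn:decoupling_TY}.

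To iterate, take a piece $g_{J}$ with $t\in J$, $|J|=\delta$, and rescale $t=t_0+\delta s$ together with the anisotropic dilation $w_j\mapsto \delta^{-j}w_j$ adapted to the Taylor expansion above. This turns $E_\lambda g_J$ into an operator of the same form with frequency parameter $\lambda\delta^{d+1}$ and a new phase $\tilde\Psi$. The new phase again satisfies \eqref{260719le1-17} with uniform constants and has uniformly bounded $C^M$ norms, because the remainder $R$ becomes $\delta^{-(d+1)}\cdot O(\delta^{d+2}|s|^{d+2})=O(\delta)$ after normalization. We then apply the inductive hypothesis at scale $\lambda\delta^{d+1}$ and multiply the constants; after $O(1/\sigma)$ steps we reach caps of length $\lambda^{-1/(d+1)}$, and the $\eps$ losses accumulate only to $\lambda^{C\sigma+\eps}$.

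We expect the main obstacle to be this last step: showing that the class of phases is closed under the rescaling, uniformly in $C^M$ norms and in the lower bound for the determinant in \eqref{260719le1-17}. The same uniformity is needed for the approximation-by-linearization on $\lambda^{1-\sigma}$-balls to produce errors absorbable into Schwartz tails. We would handle it by tracking the Taylor coefficients explicitly, as in Appendix A of \cite{LLO25}.
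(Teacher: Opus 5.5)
Your outline is essentially the argument the paper relies on. The paper gives no proof of its own: it cites \cite{BHS20} for $d=1$, and for general $d$ the Pramanik--Seeger bootstrapping combined with Bourgain--Demeter--Guth decoupling, as carried out in \cite[Theorem 3.2 and Appendix A]{LLO25}. Two scales in your sketch need correcting, and both fixes stay within your scheme. First, the linearization step must be done on balls with $|w-\bar w|\lesssim\lambda^{\sigma-1}$ (radius $\lambda^{\sigma}$ in $x=\lambda w$, with $\sigma<1/2$); there the quadratic Taylor error is $O(\lambda^{2\sigma-1})$, and this is exactly the scale dual to caps of length $\lambda^{-\sigma/(d+1)}$. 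On balls of radius $\lambda^{1-\sigma}$ the error is of size $\lambda^{1-2\sigma}$, and shortening the $t$-intervals does not remove it. Second, the anisotropic dilation should be $w_j\mapsto\delta^{-(d+1-j)}w_j$, not $\delta^{-j}w_j$, to produce the parameter $\lambda\delta^{d+1}$.
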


To use this decoupling estimate, we set $w=(z,u)$, $\eta=\xi_2$, $A(t,w;\eta)=\chi(t,z,u)\beta(\xi_2)$ and $\Psi(t,w)=\gamma(t,z,u)$. Then  \eqref{260719le1-17} is exactly the cinematic curvature condition for $\gamma$, i.e.
\begin{equation}
\det \left(\begin{array}{ccccc}
\partial_{z_1}\gamma & \partial_{z_2} \gamma & \partial_{u_1} \gamma& \ldots& \partial_{u_d} \gamma\\
\partial_{z_1}\partial_t \gamma & \partial_{z_2}\partial_t \gamma & \partial_{u_1}\partial_t \gamma& \ldots&  \partial_{u_d}\partial_t \gamma\\
\partial_{z_1}\partial_t^2 \gamma & \partial_{z_2}\partial_t^2 \gamma & \partial_{u_1}\partial_t^2 \gamma& \ldots&  \partial_{u_d}\partial_t^2 \gamma\\
\vdots & \vdots & \vdots & \ddots & \vdots \\
\partial_{z_1}\partial_t^{d+1} \gamma & \partial_{z_2}\partial_t^{d+1} \gamma & \partial_{u_1}\partial_t^{d+1} \gamma& \ldots&  \partial_{u_d}\partial_t^{d+1} \gamma
\end{array}\right) \neq 0.
\end{equation}
Let $\{I_d\}$ be a family of intervals covering the support of $t$, each with length $\lambda^{-\frac{1}{d+1}}$. Then Lemma \ref{d-decoupling} yields
\begin{align}\label{260719le1-20}
   & \Vert A_{\lambda}^{Main}g \Vert_{L^p(\R^2\times \R^d)} \lesim_{\varepsilon}\notag \\
    &\lambda^{\frac{1}{d+1}(1-\frac{1}{p}-\frac{(d+1)(d+2)}{2p})+\varepsilon}\bigg(\sum_{|I_d|=\lambda^{-\frac{1}{d+1}}} \Vert A_{\lambda,I_d}^{Main}g \Vert_{L^p(\R^2\times \R^d)}^p \bigg)^{\frac{1}{p}} +\lambda^{-N} \Vert g\Vert_2,
\end{align}
where 
\begin{align}
    A_{\lambda,I_d}^{Main}g(z,u):=\int_{\R^2} e^{i\lambda \gamma(t,z,u)\xi_2} h_{\lambda,I_d}(t,\xi_2) \chi(t,z,u)\beta(\xi_2) dtd\xi_2,
\end{align}
and $h_{\lambda,I_d}(t,\xi_2)$ denotes $h_{\lambda}(t,\xi_2)$ with $t$ restricted on $I_d$ smoothly. Here $\chi$ and $\beta$ may be different cut-off functions but they still work for the following argument. By the rapid decay of the kernel, we may first localize \(g\) to a ball of radius \(O(\lambda)\), at the cost of an error \(O_N(\lambda^{-N})\|g\|_{L^p}\). For the localized function, H\"older's inequality gives
\[
\lambda^{-N}\|g\|_2
\lesssim
\lambda^{-N+1-\frac{2}{p}}\|g\|_p.
\]
Since \(N\) is arbitrary, this term can be absorbed into the right-hand side of \eqref{Aim}, and will be omitted hereafter.

\subsection{Variable coefficient decoupling  at lower degree}

For each fixed interval $I_d$, after a permutation of the $(z,u)$, we may assume without loss of generality that
\begin{equation}\label{cc(d-1)}
\det \left(\begin{array}{ccccc}
\partial_{z_1}\gamma & \partial_{z_2} \gamma & \partial_{u_1} \gamma& \ldots& \partial_{u_{d-1}} \gamma\\
\partial_{z_1}\partial_t \gamma & \partial_{z_2}\partial_t \gamma & \partial_{u_1}\partial_t \gamma& \ldots&  \partial_{u_{d-1}}\partial_t \gamma\\
\partial_{z_1}\partial_t^2 \gamma & \partial_{z_2}\partial_t^2 \gamma & \partial_{u_1}\partial_t^2 \gamma& \ldots&  \partial_{u_{d-1}}\partial_t^2 \gamma\\
\vdots & \vdots & \vdots & \ddots & \vdots \\
\partial_{z_1}\partial_t^{d} \gamma & \partial_{z_2}\partial_t^{d} \gamma & \partial_{u_1}\partial_t^{d} \gamma& \ldots&  \partial_{u_{d-1}}\partial_t^{d} \gamma
\end{array}\right) \neq 0,
\end{equation}
for all $t\in I_d$. This is because $z$ and $u$ play the same role in this argument; we don’t need to distinguish between them. By Taylor expansion, we have
\begin{align}
    \gamma(t,z,u)=\gamma(c_d,z,u)&+\partial_t \gamma(c_d,z,u)(t-c_d)+\cdots \notag\\&+\partial_t^d \gamma(c_d,z,u)\frac{(t-c_d)^d}{d!} +O(|t-c_d|^{d+1}).
\end{align}
Since $|I_d|=\lambda^{-\frac{1}{d+1}}$, we can ignore the $|t-c_d|^{d+1}=O(1)$ term, i.e., $A_{\lambda,I_d}^{Main}g$ can be reduced to
\begin{align}
    \int_{\R^2} e^{i\lambda \gamma_{d-1}(t,z,u)\xi_2} h_{\lambda,I_d}(t,\xi_2) \chi(t,z,u)\beta(\xi_2) dtd\xi_2,
\end{align}
where 
\begin{align}
    \gamma_{d-1}(t,z,u)= \gamma(c_d,z,u)+\partial_t \gamma(c_d,z,u)(t-c_d)+\cdots \notag+\partial_t^d \gamma(c_d,z,u)\frac{(t-c_d)^d}{d!}.
\end{align}
For each fixed $u_d$, \eqref{cc(d-1)} says that the phase function $\gamma_{d-1}(t,z,u)$ satisfies \eqref{260719le1-17} with $w= (z,u_1,\cdots,u_{d-1})$. There is a standard method to use decoupling: we translate and rescale $I_d$ to the unit interval and use Lemma \ref{d-decoupling} with $d+2$ replaced by $d+1$, and then we transform the unit interval back to $I_d$. Then we will obtain that for $p\ge d(d+1)$,
\begin{align}\label{260719le1-25}
    \Vert A_{\lambda,I_d}^{Main}g(\cdot,u_d) \Vert_{L^p(\R^2\times \R^{d-1})} \lesssim_{\varepsilon}&  \lambda^{(\frac{1}{d}-\frac{1}{d+1})(1-\frac{1}{p}-\frac{d(d+1)}{2p})+\varepsilon} \cdot\notag \\ &\bigg(\sum_{\underset{|I_{d-1}|=\lambda^{-\frac{1}{d}}}{ I_{d-1}\subset I_d}} \Vert A_{\lambda,I_{d-1}}^{Main}g(\cdot,u_d) \Vert_{L^p(\R^2\times \R^{d-1})}^p\bigg)^{ \frac{1}{p}}.
\end{align}
Since the implicit decoupling constant relies on the determinant in \eqref{cc(d-1)}, which is uniform in $u_d$, we can integrate \eqref{260719le1-25} with respect to $u_d$ and obtain
\begin{align}
    \Vert A_{\lambda,I_d}^{Main}g \Vert_{L^p(\R^2\times \R^d)} \lesssim_{\varepsilon}   \lambda^{(\frac{1}{d}-\frac{1}{d+1})(1-\frac{1}{p}-\frac{d(d+1)}{2p})+\varepsilon} \bigg( \sum_{\underset{|I_{d-1}|=\lambda^{-\frac{1}{d}}}{ I_{d-1}\subset I_d}} \Vert A_{\lambda,I_{d-1}}^{Main}g \Vert_{L^p(\R^2\times \R^d)}^p\bigg)^{\frac{1}{p}}.
\end{align}
By the same token, we have
\begin{align}
    \Vert A_{\lambda,I_j}^{Main}g \Vert_{L^p(\R^2\times \R^d)} \lesssim_{\varepsilon}   \lambda^{(\frac{1}{j}-\frac{1}{j+1})(1-\frac{1}{p}-\frac{j(j+1)}{2p})+\varepsilon}  \bigg( \sum_{\underset{|I_{j-1}|=\lambda^{-\frac{1}{j}}}{ I_{j-1}\subset I_j}} \Vert A_{\lambda,I_{j-1}}^{Main}g \Vert_{L^p(\R^2\times \R^d)}^p\bigg)^{\frac{1}{p}}
\end{align}
for $j=2,\cdots,d$. Combining these $d-1$ estimates with \eqref{260719le1-20} implies
\begin{align}
     \Vert A_{\lambda}^{Main}g \Vert_{L^p(\R^2\times \R^d)} \lesim_{\varepsilon} \lambda^{\frac12-\frac{d+1}{p}+\varepsilon}\bigg(\sum_{|I_1|=\lambda^{-\frac{1}{2}}} \Vert A_{\lambda,I_1}^{Main}g \Vert_{L^p(\R^2\times \R^d)}^p \bigg)^{\frac{1}{p}}.
\end{align}
To prove \eqref{Aim}, it suffices to prove
\begin{align}\label{Aim2}
    \bigg(\sum_{|I_1|=\lambda^{-\frac{1}{2}}} \Vert A_{\lambda,I_1}^{Main}g \Vert_{L^p(\R^2\times \R^d)}^p \bigg)^{\frac{1}{p}}\lesssim \lambda^{-\frac12-\frac{2}{p}}\|g\|_{L^p(\R^2)}.
\end{align}

\subsection{Proof of \texorpdfstring{\eqref{Aim2}}{}}

Recall that
\begin{align}
    A_{\lambda,I_1}^{Main}g(z,u)= \int_{I_1} \int_{\R^2} \widehat{g}(\bxi) e^{i\lambda(t\xi_1+ \gamma(t,z,u)\xi_2)} \chi(t,z,u)\beta(\bxi) d\bxi dt,
\end{align}
where we abuse the notation $\beta$. We fix $z$ and $u$ and apply Plancherel to the integral with respect to $\xi$, then
\begin{align}
    \left|\int_{\R^2} \widehat{g}(\bxi) e^{i\lambda(t\xi_1+ \gamma(t,z,u)\xi_2)} \beta(\bxi) d\bxi\right| &=\left|\int_{\R^2} g(v) \hat{\beta}(v_1-\lambda t,v_2-\lambda \gamma(t,z,u)) dv\right| \notag\\
    &\le\|g\|_{L^\infty(\R^2)}\|\hat{\beta}\|_{L^1(\R^2)} \notag\\
    &\lesssim \|g\|_{L^\infty(\R^2)}.
\end{align}
Since $|I_1|=\lambda^{-\frac12}$, we obtain \eqref{Aim2} for $p=\infty$,
\begin{align}\label{260719le1-32}
    \sup_{|I_1|=\lambda^{-\frac{1}{2}}} \Vert A_{\lambda,I_1}^{Main}g \Vert_{L^\infty (\R^2\times \R^d)}\lesssim \lambda^{-\frac12}\|g\|_{L^\infty(\R^2)}.
\end{align}
The rest is to prove \eqref{Aim2} for $p=2$. We recall that 
\begin{align}
    A_{\lambda,I_1}^{Main}g(z,u)=\int_{\R^2} e^{i\lambda \gamma(t,z,u)\xi_2} h_{\lambda,I_1}(t,\xi_2) \chi(t,z,u)\beta(\xi_2) dtd\xi_2.
\end{align}
Let $\Psi(t,z,u,\xi_2)=\gamma(t,z,u)\xi_2$. Then a direct calculation shows
\begin{equation}
    \nabla_{(z,u)}\nabla_{(t,\xi_2)}\Psi =\left(
    \begin{array}{ccccc}
\partial_{z_1}\partial_t\gamma\cdot\xi_2 & \partial_{z_2}\partial_t \gamma\cdot\xi_2 & \partial_{u_1}\partial_t \gamma\cdot\xi_2& \ldots& \partial_{u_{d}}\partial_t \gamma\cdot\xi_2\\
\partial_{z_1} \gamma & \partial_{z_2} \gamma & \partial_{u_1} \gamma& \ldots&  \partial_{u_{d}} \gamma
\end{array}\right).
\end{equation}
Since $|\xi_2|\simeq 1$, the cinematic curvature condition indicates that there exists a $2\times 2$ non-degenerate sub-matrix of  $\nabla_{(z,u)}\nabla_{(t,\xi_2)}\Psi$. After a permutation of the $(z,u)$, we may assume without loss of generality that
\begin{align}
    \det \left( \begin{array}{ccccc}
\partial_{z_1}\partial_t\gamma\cdot\xi_2 & \partial_{z_2}\partial_t \gamma\cdot\xi_2 \\
\partial_{z_1} \gamma & \partial_{z_2} \gamma 
\end{array}\right) \neq 0.
\end{align}
So for each $u$, H\"ormander's $L^2$ theorem implies
\begin{align}
    \Vert A_{\lambda,I_1}^{Main}g (\cdot,u)\Vert_{L^2 (\R^2)}\lesssim \lambda^{-1}\|h_{\lambda,I_1}\|_{L^2(\R^2)}.
\end{align}
The constant is uniform in $u$, hence an integration with respect to $u$ yields
\begin{align}
    \Vert A_{\lambda,I_1}^{Main}g \Vert_{L^2 (\R^2\times\R^d)}\lesssim \lambda^{-1}\|h_{\lambda,I_1}\|_{L^2(\R^2)}.
\end{align}
Then summing over $I_1$ implies
\begin{align}
    \bigg(\sum_{|I_1|=\lambda^{-\frac{1}{2}}} \Vert A_{\lambda,I_1}^{Main}g \Vert_{L^2(\R^2\times \R^d)}^2 \bigg)^{\frac{1}{2}}\lesssim\lambda^{-1}\|h_\lambda\|_{L^2}.
\end{align}
Recall that $h_\lambda$ denotes the $\xi_1$-Fourier inverse of $\hat{g}$ at $t\lambda$, 
\begin{align}
     h_\lambda(t,\xi_2):=\int_{\R} \widehat{g}(\xi_1,\xi_2)e^{i\lambda t\xi_1} d\xi_1.
\end{align}
So we have 
\begin{align}
    \|h_\lambda\|_{L^2(\R^2)}= \lambda^{-\frac12} \|g\|_{L^2(\R^2)}.
\end{align}
As a result, we obtain \eqref{Aim2} for $p=2$, and an interpolation ends the proof.

\subsection{From Corollary \ref{maximalFnBdsCor} to Theorem \ref{maximalFnBound} - interpolation with a geometric estimate}\label{interpolationSubSection} 
In this section we will prove Theorem \ref{maximalFnBound}. We will do so by combining Corollary \ref{maximalFnBdsCor} with the following result by the sixth author:
\begin{theorem}\label{ZahlThm}
Let $I\subset\RR$ be a closed interval, let $W_0\subset\RR^2\times\RR^d$ be open, let $\gamma_0\colon I\times W_0 \to\RR$ be smooth. Suppose that $\gamma_0$ satisfies the multiparameter cinematic curvature condition \eqref{soggeLocalMultiParam} on $I\times W_0$. 

Let $W\subset W_0$ be a closed cube, let $\gamma$ be the restriction of $\gamma_0$ to $I\times W$, and let $M^\gamma$ be the maximal operator associated to $\gamma$, in the sense of Definition \ref{averagingAndMaximalOperators}. Let $\eps>0$. Then
\begin{equation}\label{LdBdZahl}
\Vert M^\gamma (P_k f ) \Vert_{L^{d+2}(\RR^2)}\leq C_{\gamma,\eps}2^{k\eps}\Vert f\Vert_{L^{d+2}(\RR^2)}.
\end{equation}
\end{theorem}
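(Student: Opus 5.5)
The plan is to reduce \eqref{LdBdZahl} to a purely geometric estimate for $\delta$-neighbourhoods of curves, with $\delta = 2^{-k}$, and then invoke a Kakeya-type incidence bound for curves with $d+2$ degrees of freedom, in the spirit of \cite{PYZ22}.

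\textbf{Step 1: reduce to a $\delta$-maximal operator.} The kernel of $P_k$ is an $L^1$-normalized bump at scale $\delta$ with Schwartz tails. So, after decomposing dyadically in the distance from the curve, we get
\[
|M^\gamma P_k f(z)| \lesssim \sum_{j\ge 0} 2^{-jN}\, M_{2^j\delta}|f|(z),
\qquad
M_\rho F(z)=\sup_{u\colon (z,u)\in W}\ \rho^{-1}\int_{N_\rho(\beta_{z,u})}F .
\]
Here $N_\rho(\beta)$ is the $\rho$-neighbourhood of $\beta$. It therefore suffices to prove $\|M_\delta F\|_{L^{d+2}}\lesssim_\eps \delta^{-\eps}\|F\|_{L^{d+2}}$ for every $\delta\in(0,1]$. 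By restricted weak type estimates and interpolation (each step losing only $\delta^{-\eps}$), we may take $F=1_E$. Consider the set $F_\lambda$ of points $z$ admitting $u(z)$ with $|N_\delta(\beta_{z,u(z)})\cap E|\ge \lambda\delta$. The goal is
\[
|F_\lambda| \lesssim_\eps \delta^{-\eps}\lambda^{-(d+2)}|E| .
\]

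\textbf{Step 2: discretize.} Choose a maximal $\delta$-separated set $Z\subset F_\lambda$, so $\#Z\sim\delta^{-2}|F_\lambda|$, and let $\mathcal T=\{N_\delta(\beta_{z,u(z)})\colon z\in Z\}$. Because the $z$-coordinates are $\delta$-separated, the parameter points $(z,u(z))\in\RR^{d+2}$ form a $\delta$-separated set. Moreover, any $\rho$-ball in parameter space contains at most $\lesssim(\rho/\delta)^2$ of them, since $u$ is a function of $z$. This two-dimensional non-concentration condition replaces the usual ``one tube per direction'' hypothesis.

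\textbf{Step 3: the incidence bound.} By the cinematic curvature condition \eqref{soggeLocalMultiParam}, the jet map $(z,u)\mapsto(\gamma,\partial_t\gamma,\dots,\partial_t^{d+1}\gamma)(t_0,z,u)$ is a local diffeomorphism for each $t_0$. Hence two curves that are tangent to order $d$ (jets of order $d+1$ within $\delta$) have parameters $O(\delta)$-close, and quantitatively, curves whose parameters are $\rho$-apart separate to order $d+1$. This is exactly the statement that $\{\beta_{z,u}\}$ is a family of curves with $d+2$ degrees of freedom, in the sense of \cite{PYZ22, Zah26}. I would then apply the $L^{(d+2)/(d+1)}$ Kakeya-type estimate
\[
\Big\|\sum_{T\in\mathcal T}1_T\Big\|_{L^{\frac{d+2}{d+1}}}\lesssim_\eps \delta^{-\eps}\,\big(\delta\,\#\mathcal T\big)^{\frac{d+1}{d+2}}\cdot(\text{non-concentration factor}).
\]
This estimate is valid under the two-dimensional ball condition from Step 2. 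Combining it with H\"older's inequality,
\[
\lambda\delta\,\#\mathcal T\le \int_E \sum_T 1_T \le |E|^{1/(d+2)}\,\Big\|\sum_T 1_T\Big\|_{\frac{d+2}{d+1}},
\]
yields the desired bound on $|F_\lambda|$.

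Since $W$ is a closed cube in the open set $W_0$ where \eqref{soggeLocalMultiParam} holds, compactness gives uniform constants, and a finite partition of $W$ into small subcubes lets us localize to neighbourhoods where the diffeomorphism statements are quantitatively uniform.

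\textbf{Expected main obstacle.} The hardest step is Step 3. One must verify that a family satisfying only smooth (not algebraic) cinematic curvature meets the hypotheses of the degrees-of-freedom incidence theorem at the right scales. One must also check that the two-dimensional non-concentration coming from ``one curve per point $z$'' gives exactly the exponent $d+2$. If the available theorem assumes a stronger (e.g.\ $(d+2)$-dimensional Frostman) condition, I would first try to recover the estimate by rescaling. Concretely, I would cover parameter space by $\rho$-balls, rescale each ball so that its curves become a fresh cinematic family at scale $\delta/\rho$, and induct on scales, absorbing the losses into $\delta^{-\eps}$.
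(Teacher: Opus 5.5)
Your Steps 1 and 2 are sound; Step 1 is exactly how the paper passes from $P_k$ to $\delta$-thickened curves. The gap is Step 3. It carries the entire content of the theorem, and in the form you state it, it is not backed by any available result.

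If you carry out your H\"older computation, the unspecified ``non-concentration factor'' must be $\delta^{-1/(d+2)}$. So you need
$\Vert\sum_{T\in\mathcal T}1_T\Vert_{(d+2)/(d+1)}\lesssim\delta^{-\eps}\delta^{-1/(d+2)}(\delta\,\#\mathcal T)^{(d+1)/(d+2)}$
for tubes indexed by a graph $u=u(z)$ over a $\delta$-separated set of $z\in\RR^2$. In the language of \cite{Zah26}, this is the dual form of an $s=d$ parameter maximal function over an $m=d+2$ dimensional cinematic family with a two-dimensional base, i.e.\ the case $s=m-2$.

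The general result there, \cite[Theorem 1.5]{Zah26}, carries a transversality hypothesis (\cite[Definition 1.3]{Zah26}). That hypothesis is vacuous only when $s=m-1$. With your two-dimensional base it is an extra condition, and nothing in your argument checks it. The estimate of \cite{PYZ22} (cf.\ Lemma \ref{lemma:PYZ_TY}) concerns $m=3$ with a one-dimensional base, so it does not cover your setting either. Your fallback of rescaling $\rho$-balls in parameter space and inducting on scales is a strategy for proving such an incidence theorem from scratch. It is not a step that can be filled in routinely.

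The missing idea, which is how the paper argues, is to give away the supremum over $z_1$ as well. Write $W=I_1\times I_2\times C$ and
$M_1^\gamma f(z_2)=\sup_{z_1\in I_1,\,u\in C}\big|\int_I f(t,\gamma(t,z_1,z_2,u))\,dt\big|$.
Then $M^\gamma P_kf(z_1,z_2)\le M_1^\gamma P_kf(z_2)$, so by Fubini
$\Vert M^\gamma P_kf\Vert_{L^{d+2}(\RR^2)}\le|I_1|^{1/(d+2)}\Vert M_1^\gamma P_kf\Vert_{L^{d+2}(\RR)}$.

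Now $M_1^\gamma$ is an $s=d+1=m-1$ parameter maximal function with a one-dimensional base. The transversality condition is therefore vacuous, and the thickened-curve version of its $L^{d+2}$ bound with $\delta^{-\eps}$ loss is exactly \cite[Theorem 1.5]{Zah26}. Your Step 1 then transfers this bound to $P_kf$.

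In your discretized picture, this amounts to splitting $Z$ into $\sim\delta^{-1}$ columns $J$ of width $\delta/2$ in the $z_1$ variable.
\begin{enumerate}
\item Inside each column the $z_2$-coordinates are $\delta$-separated.
\item Dualizing Zahl's bound gives the essentially disjoint estimate $\Vert\sum_{T\in\mathcal T_J}1_T\Vert_{(d+2)/(d+1)}\lesssim\delta^{-\eps}(\delta\,\#\mathcal T_J)^{(d+1)/(d+2)}$ for each column.
\item The triangle inequality plus H\"older over the $\sim\delta^{-1}$ columns then produces exactly the factor $\delta^{-1/(d+2)}$ required above.
\end{enumerate}
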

Theorem \ref{ZahlThm} is an immediate consequence of the following slightly stronger result: suppose that $W = I_1\times I_2\times C\subset W_0$ is a closed cube, and let
\begin{align}
    {M}_1^\gamma f(z_2) = \sup_{z_1\in I_1,\, u\in C} \left|\int_I f(t, \gamma(t, z_1,z_2, u))dt\right|.
\end{align}
Then we have
\begin{equation}\label{oneDimSup}
\Vert {M}_1^\gamma (P_kf) \Vert_{L^{d+2}(\RR)}\leq C_{\gamma,\eps}2^{k\eps}\Vert f\Vert_{L^{d+2}(\RR^2)}.
\end{equation}
Inequality \eqref{oneDimSup} is essentially Theorem 1.5 from \cite{Zah26}. We will briefly summarize the similarities and differences between these two statements. 
\begin{itemize}
    \item In the language of \cite{Zah26}, $M_1^\gamma$ is an $s=d+1$ parameter maximal function associated to an $m = d+2$ dimensional family of cinematic curves (the definition of cinematic curves from \cite{Zah26} coincides with the cinematic curvature condition \eqref{soggeLocalMultiParam}).
    \item  Note that \cite{Zah26} defines a ``transversality condition'' (\cite{Zah26}, Definition 1.3) which is sometimes difficult to verify. However in our setting this transversality condition is vacuously satisfied, since $s = m-1$. 

    \item The maximal function from \cite[Theorem 1.5]{Zah26} is taken over $\delta$-thickened curves, whereas Inequality \eqref{oneDimSup} refers to a maximal function taken over genuine curves, but applied to functions that have been blurred-out at scale $2^{-k}$ by the Littlewood–Paley projection $P_k$. This is harmless, because the maximal average $M_1^\gamma(P_k f)$ can be controlled by a dyadic sum of maximal functions over $\delta$-thickened curves, as $\delta$ ranges over $2^{-\ell},$ $\ell=1,\ldots,k$.
\end{itemize}

We are now ready to prove Theorem \ref{maximalFnBound}.

\begin{proof}[Proof of Theorem \ref{maximalFnBound}]
We first consider the case $d\geq 2$. Since $I\times W'$ is compact, it suffices to consider the case where $I$ is a short interval and $W'\subset\RR^2\times\RR^d$ is a small closed disk. For notational convenience we will suppose that $I$ and $W'$ are centered at the origin.

Extend $\gamma$ smoothly to a neighbourhood of $I\times W'$ if necessary. Select a nonnegative bump function $\chi$ that is identically $1$ on $I\times W'$, and whose support is sufficiently small so that Corollary \ref{maximalFnBdsCor} applies. For every function $f\colon\RR^2\to\CC$, we have
\begin{equation}\label{smoothDomination}
M^\gamma f(z)\leq M^\gamma_\chi(|f|)(z).
\end{equation}

The argument used to obtain \eqref{LdBdZahl} from the thickened-curve estimate also gives, for every $\eps>0$,
\[
\Vert M^\gamma_\chi(P_kf)\Vert_{L^{d+2}(\RR^2)}
\lesssim_{\gamma,\chi,\eps}
2^{k\eps}\Vert f\Vert_{L^{d+2}(\RR^2)}.
\]
Combining this estimate with Corollary \ref{maximalFnBdsCor} and interpolation, we conclude that for each $d+2<p<\infty$, there exists $\eta>0$ so that
\begin{equation}\label{highFrequencyDecayd+2}
\Vert M^\gamma_\chi(P_kf)\Vert_{L^p(\RR^2)}
\leq C_{\gamma,\chi,p}2^{-k\eta}\Vert f\Vert_{L^p(\RR^2)}.
\end{equation}
Summing over $k$ and treating the low frequencies separately, we obtain
\[
\Vert M^\gamma_\chi f\Vert_{L^p(\RR^2)}
\lesssim_{\gamma,\chi,p}\Vert f\Vert_{L^p(\RR^2)}.
\]
Applying this estimate to $|f|$ and using \eqref{smoothDomination}, we conclude that
\[
\Vert M^\gamma f\Vert_{L^p(\RR^2)}
\lesssim_{\gamma,\chi,p}\Vert f\Vert_{L^p(\RR^2)}.
\]
The case $p=\infty$ is immediate.
If $d=1$, the result follows from \cite{MSS92}.
\end{proof}


\section{The case $n=1$: Nonlinear Kakeya-type sets}\label{n=1Case}
Recall that in the introduction, we defined $W\subset\RR^n\times\RR^d$ and $\gamma\colon I\times W\to\RR$. In this section we consider the case $n=1$. This is sometimes referred to as the ``Kakeya'' setting. Our main tool is the following result due to Wisewell. We will state a special case of this theorem---the version in \cite{Wis04} has slightly weaker regularity hypotheses on the function $f$.

\begin{theorem}[\cite{Wis04} Theorem 2]\label{WisewellThm}
Let $0\leq p\leq n-e\leq q$ be integers, with $e<n$. There exists a function $\phi\colon\RR^p\to\RR^q$ with the following property.

Let $f(y,\omega,t)\colon\RR^p\times\RR^q\times\RR^e\to\RR^{n-e}$ have Lipschitz derivative, and suppose that the Jacobian $\frac{df}{d\omega}$ has full rank $n-e$. Then the set
\[
E_f = \bigcup_{y\in\RR^p}\operatorname{graph} f(y, \phi(y),\cdot)
\]
has measure 0.
\end{theorem}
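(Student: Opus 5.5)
The plan is a Baire category argument in the style of Körner's proof of the existence of Besicovitch sets. Since $\phi$ must not depend on $f$, I would first tame the class of admissible $f$. Measure zero is local, and the conclusion is invariant under restricting $y$, $\omega$, $t$ to countably many compact boxes. So it suffices to consider, for each $L\in\mathbb N$, the class $\mathcal K_L$ of all $f$ on a fixed box $B=[-L,L]^{p+q+e}$ satisfying
\[
\Vert f\Vert_{C^{1,1}(B)}\le L \quad\text{and}\quad \sigma_{\min}\Big(\frac{df}{d\omega}\Big)\ge L^{-1}\ \text{on } B .
\]
A general $f$ is covered by countably many such pieces, after translating the boxes in $\omega$. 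For this reason I would take $\phi$ from the space $\mathcal X$ of continuous maps $[-L,L]^p\to\RR^q$ with a fixed, generous range, equipped with the sup norm. This is a complete metric space. By Arzelà--Ascoli, $\mathcal K_L$ is compact in $C^1(B)$.

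For $\eps>0$, define $\mathcal U_{L,\eps}$ to be the set of $\phi\in\mathcal X$ such that, for every $f\in\mathcal K_L$, some open set $O\supset E_f\cap B$ has $|O|<\eps$. The first step is to show that $\mathcal U_{L,\eps}$ is open. This should follow from uniform continuity: if $\Vert\phi-\phi'\Vert_\infty<\rho$, then $E^{\phi'}_f$ lies in the $L\rho$-neighbourhood of $E^{\phi}_f$. Compactness of $\mathcal K_L$ then lets one choose $\rho$ uniformly in $f$, possibly after shrinking $\eps$ slightly, i.e.\ working with closed neighbourhoods. Granting density, the intersection $\bigcap_{L,\eps\in\mathbb Q}\mathcal U_{L,\eps}$ is residual, and any $\phi$ in it works for every $f$ simultaneously.

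The second step is density. Fix $\phi_0$ and a scale $\delta$, and partition $[-L,L]^p$ into cubes $Q$ of side $\delta$. When $p+e<n$, the union $E_f$ is a Lipschitz image of a set of dimension $p+e$, hence null for any Lipschitz $\phi$, so density is trivial. The only real case is $p=n-e$. There the local model is the following. By the full-rank hypothesis, after Taylor expanding around a center $y_Q$, we have
\[
f(y,\omega,t)\approx f(y_Q,\omega_Q,t)+A_Q(y-y_Q)+B_Q(\omega-\omega_Q),
\]
with $B_Q$ surjective onto $\RR^{n-e}$. By choosing $\phi$ on $Q$ to have a large derivative, or oscillation, in the directions that $B_Q$ sees, the sheets $\{f(y,\phi(y),\cdot)\}_{y\in Q}$ can be made to ``rotate'' in a Kakeya/Besicovitch fashion (a Perron-tree or Kahane-type construction). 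The aim is that their union over $Q$ occupies measure $\le\eps\delta^{p}$ rather than $\sim\delta^p$, while $\Vert\phi-\phi_0\Vert_\infty$ stays small.

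The main obstacle is making this perturbation work for \emph{all} $f\in\mathcal K_L$ at once, because the compression depends on the linear data $(A_Q,B_Q)$ of $f$. My first attempt would be to take a finite $\eta$-net $f_1,\dots,f_m$ of $\mathcal K_L$ and build the perturbation hierarchically over scales $\delta_1\gg\delta_2\gg\cdots\gg\delta_m$. At scale $\delta_j$ one adds an oscillation compressing the sheets for $f_j$. The $C^{1,1}$ bound guarantees that the finer oscillations destroy at most a negligible amount of the compression already achieved for $f_1,\dots,f_{j-1}$. If the interaction between scales proves too costly, the fallback is to exploit the extra room $q\ge n-e$: one would let different coordinate blocks of $\omega$ carry the compressions tailored to different families of $B_Q$. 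The final step is to pass from the net to all of $\mathcal K_L$ via the uniform continuity established in the openness step.
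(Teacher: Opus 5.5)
The paper does not prove this theorem; it quotes it from \cite{Wis04} and notes only that Wisewell's proof is an explicit construction producing a measurable $\phi$. Your Baire category scheme fails at its foundation. It runs in the space $\mathcal X$ of \emph{continuous} maps, and no continuous $\phi$ has the required property, so the density of $\mathcal U_{L,\eps}$ is false.

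Already in the case $p=q=e=1$, $n=2$ (the one the paper uses), take $f(y,\omega,t)=\omega+yt$, rescaled by constants so that it lies in $\mathcal K_L$; this only changes constants below. If $\phi$ is continuous on $[-L,L]$, then each vertical slice $S_t=\{\phi(y)+yt\colon |y|\le L\}$ of $E_f$ is an interval. Comparing the values at $y=\pm L$ gives
\[
|S_t|+|S_{-t}|\geq |\phi(L)-\phi(-L)+2Lt|+|\phi(L)-\phi(-L)-2Lt|\geq 4L|t| .
\]
Hence the part of $E_f$ with $|y|,|t|\le L$ has measure at least $\int_0^L 4Lt\,dt=2L^3$, whatever $\phi$ is. So $\mathcal U_{L,\eps}=\emptyset$ for small $\eps$. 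No choice of ``large derivative or oscillation'' on the cubes $Q$ can help, because connectedness of the slices is exactly what continuity forces. For $p\geq 2$, a degree argument applied to $y\mapsto\phi(y)+t_1y$ with $|t_1|$ large gives the same obstruction. The same computation on a subinterval shows that an admissible $\phi$ cannot be continuous on any interval.

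The standard repair is K\"orner's framework. Work in the complete space, under the Hausdorff metric, of compact sets $P\subset[-L,L]^p\times[-R,R]^q$ projecting onto $[-L,L]^p$, and at the end take a Borel selection $\phi(y)\in P_y$. Your openness argument transfers, since each $E^P_f=\bigcup_{(y,\omega)\in P}\operatorname{graph} f(y,\omega,\cdot)$ (with $t$ in a box) is compact. But then density is the entire content of the theorem, and your sketch does not supply it.

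The uniformity you borrow from the openness step holds only for a fixed $\phi$ (or $P$), with a radius in $f$ that depends on it. So it cannot carry you from an $\eta$-net $f_1,\dots,f_m$, chosen before the perturbation, to all of $\mathcal K_L$. A $C^1$-perturbation of $f_j$ of size $\eta$ can move sheets coming from different cubes relative to one another by $\sim\eta\gg\delta_j$, destroying the overlaps that produced the compression at scale $\delta_j$. Your fallback of using spare coordinates of $\omega$ is unavailable when $q=n-e$, e.g.\ for $q=1$.

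Universality is a very strong requirement. Taking $f(y,\omega,t)=g(y,\omega)$ shows that the graph of $\phi$ must have null image under every $C^{1,1}$ map $g$ with $\partial_\omega g$ of full rank, a continuum of ``directions'' at once. A mechanism achieving this robustly is exactly what the construction in \cite{Wis04} provides, and it is what your proposal is missing.
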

The proof in \cite{Wis04} is constructive, and in particular the function $\phi$ from Theorem \ref{WisewellThm} is measurable. We will focus on the planar, one-parameter case $p=q=e=1$ and $n=2$. For our applications, we will consider functions $f(y,\omega,t)$ whose domain is $[-1,1]^3$ rather than $\RR^3$. However, by composing $f$ with $(y,\omega,t)\mapsto (\frac{2}{\pi}\arctan y,\ \frac{2}{\pi}\arctan \omega,\frac{2}{\pi}\arctan t)$, we have the following ``local'' version of Theorem \ref{WisewellThm}. 

\begin{corollary}\label{localWisewellThm}
There exists a positive measure set $F\subset[-1,1]$ and a function $\phi\colon F\to[-1,1]$ with the following property. Let $f(y,\omega,t)\colon [-1,1]^3 \to\RR$ have Lipschitz derivative, and suppose that $\frac{df}{d\omega}\neq 0$ on $[-1,1]^3$. Then the set
\begin{equation}\label{setEfromWisewell}
E=\bigcup_{y\in F}\operatorname{graph} f(y, \phi(y),\cdot)
\end{equation}
has measure 0.
\end{corollary}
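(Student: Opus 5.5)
The plan is to specialize Theorem \ref{WisewellThm} to $p=q=e=1$, $n=2$, and to transport it from $\RR^3$ to $[-1,1]^3$ with the diffeomorphism $h(s)=\frac{2}{\pi}\arctan s$, which maps $\RR$ onto $(-1,1)$. Let $\phi_0\colon\RR\to\RR$ be the function supplied by Theorem \ref{WisewellThm}. The key point is that $\phi_0$ does not depend on $f$, so the $F$ and $\phi$ we build from it will not depend on $f$ either. Set $F=(-1,1)$, which has positive measure, and define
\[
\phi(y)=h\big(\phi_0(h^{-1}(y))\big)\in(-1,1).
\]
This is measurable because $\phi_0$ is, by the remark after Theorem \ref{WisewellThm}.

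Now fix $f$ as in the statement and define $g\colon\RR^3\to\RR$ by $g(Y,\Omega,T)=f(h(Y),h(\Omega),h(T))$. I will check the hypotheses of Theorem \ref{WisewellThm} for $g$.
\begin{enumerate}[(1)]
\item $\partial_\Omega g=h'(\Omega)\,(\partial_\omega f)(h(Y),h(\Omega),h(T))$. This never vanishes, since $h'(s)=\frac{2}{\pi}(1+s^2)^{-1}>0$ and $\partial_\omega f\neq0$ on $[-1,1]^3$.
\item $Dg$ is globally Lipschitz. Each of its entries is a product of $(Df)\circ(h,h,h)$ with some $h'$. The factor $(Df)\circ(h,h,h)$ is bounded, because $Df$ is continuous on the compact cube, and Lipschitz, because $Df$ is Lipschitz and $h$ is $1$-Lipschitz. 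The factor $h'$ is bounded and Lipschitz, since $h''$ is bounded. A product of bounded Lipschitz functions is Lipschitz.
\end{enumerate}
Theorem \ref{WisewellThm} therefore gives $\bigl|\bigcup_{Y\in\RR}\operatorname{graph} g(Y,\phi_0(Y),\cdot)\bigr|=0$.

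It remains to relate this set to $E$. Consider the map $\Theta(T,x)=(h(T),x)$, which is Lipschitz on $\RR^2$. For $y=h(Y)\in F$ we have $h(\phi_0(Y))=\phi(y)$, so $\Theta$ sends $\operatorname{graph} g(Y,\phi_0(Y),\cdot)$ onto the portion of $\operatorname{graph} f(y,\phi(y),\cdot)$ lying over $t\in(-1,1)$. Lipschitz maps send null sets to null sets, so that part of $E$ is null. The rest of $E$ consists of the points with $t=\pm1$, which lie on the two vertical lines $\{t=\pm1\}$ and hence form a null set. Thus $|E|=0$.

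I do not expect a real obstacle. The only points needing care are (i) that $\phi$ is chosen before $f$ (uniformity), and (ii) the global Lipschitz bound on $Dg$, which uses that $h'$ and $h''$ are bounded.
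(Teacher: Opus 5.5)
Your proposal is correct and is the argument the paper uses: the paper derives the corollary from Theorem \ref{WisewellThm} (with $p=q=e=1$, $n=2$) by composing $f$ with $(y,\omega,t)\mapsto(\frac{2}{\pi}\arctan y,\frac{2}{\pi}\arctan\omega,\frac{2}{\pi}\arctan t)$. You have written out the routine checks the paper leaves implicit, namely that $\phi$ is chosen independently of $f$, the global Lipschitz bound on $Dg$, $\partial_\Omega g\neq 0$, and the transfer of the null set back to $E$ via the Lipschitz map $(T,x)\mapsto(h(T),x)$ together with the two vertical lines $t=\pm1$.
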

Corollary \ref{localWisewellThm} allows us to handle the $n=1$ analogue of Theorem \ref{characterizationOfNikodymSets}.
\begin{proposition}
Let $I\subset\RR$ be a bounded closed interval. Let $W\subset\RR\times\RR$ be open. Let $\gamma\colon I\times W\to\RR$ have Lipschitz derivative. Let $\Gamma$ be the family of plane curves associated to the defining function $\gamma$. Suppose that $\partial_{u_1}\gamma$ does not vanish identically on $I\times W$. Then $\Gamma$ admits Nikodym sets. 
\end{proposition}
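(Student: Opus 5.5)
Since $\partial_{u_1}\gamma$ is continuous and does not vanish identically on $I\times W$, the plan is to localize to a small box where it is bounded away from zero. Then I rescale that box to $[-1,1]^3$ and apply Corollary \ref{localWisewellThm} with $y$ playing the role of the point parameter $z$, $\omega$ playing the role of $u$, and $t$ the curve parameter.

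\emph{Localization.} Pick $(t_0,z_0,u_0)$ with $t_0$ in the interior of $I$ (possible by continuity) and $\partial_u\gamma(t_0,z_0,u_0)\neq 0$. By continuity of $\partial_u\gamma$ there are $\delta>0$ and a closed box $J\times[z_0-\delta,z_0+\delta]\times[u_0-\delta,u_0+\delta]\subset I\times W$, with $J=[t_0-\delta,t_0+\delta]$, on which $\partial_u\gamma\neq 0$.

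\emph{Rescaling.} Define
\[
f(y,\omega,s)=\gamma(t_0+\delta s,\ z_0+\delta y,\ u_0+\delta\omega)
\]
on $[-1,1]^3$. This $f$ has Lipschitz derivative, and $\partial_\omega f=\delta\,\partial_u\gamma\neq 0$. Corollary \ref{localWisewellThm} supplies a positive measure set $F_0\subset[-1,1]$ and a measurable $\phi\colon F_0\to[-1,1]$ such that $E_0=\bigcup_{y\in F_0}\operatorname{graph} f(y,\phi(y),\cdot)$ is null.

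\emph{Transferring back.} The affine map $(s,v)\mapsto(t_0+\delta s,v)$ sends $\operatorname{graph} f(y,\phi(y),\cdot)$ onto the portion of $\beta_{z,u}$ lying over $J$, where $z=z_0+\delta y$ and $u=u_0+\delta\phi(y)$. This map preserves null sets, so the image $E$ of $E_0$ is null. Set $F=z_0+\delta F_0$, which has positive measure. For each $z\in F$, the pair $(z,u)$ lies in $W$, and $\beta_{z,u}\cap E$ contains the graph of $\gamma(\cdot,z,u)$ over $J$. That piece of curve has $\mathcal H^1$ measure at least $|J|>0$. Hence $\Gamma$ admits Nikodym sets in the sense of Definition \ref{definitionOfNikodymSet}.

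\emph{Borel issue.} Definition \ref{definitionOfNikodymSet} requires $E$ and $F$ to be Borel, and this is the main point needing care. Since $E$ is null, it is contained in a Borel (indeed $G_\delta$) null set $\tilde E$, and containment only increases $\mathcal H^1(\beta_{z,u}\cap\tilde E)$. Since $F$ has positive measure, it contains a compact set of positive measure, which can replace $F$. No measurability of $\phi$ is needed for these replacements, so the argument goes through.
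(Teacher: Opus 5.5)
Your proposal is correct and is the argument the paper intends; the paper gives no separate proof and only remarks that Corollary~\ref{localWisewellThm} handles this case. You localize to a box where $\partial_u\gamma\neq 0$, rescale affinely to $[-1,1]^3$, apply the corollary, and pull back, and you also close the point the paper leaves implicit: enlarging $E$ to a null $G_\delta$ set and shrinking $F$ to a compact set of positive measure gives the Borel sets required by Definition~\ref{definitionOfNikodymSet}, with no measurable selection of $u$ needed.
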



\section{Falconer's prescribed projection theorem}\label{curveReuseDaviesSection}
In this section we will state a nonlinear variant of Falconer's prescribed projection theorem \cite{Fal86}, which was recently proved by the first author, Fraser, McDonald, Meyer, and Taylor \cite{CFMMT}. Before stating the theorem, we will need several definitions.

Let $D\subset\RR^3$ be open and let $\phi(\alpha,z)\colon D\to\RR$. We define $\phi^\alpha(z) = \phi(\alpha,z)$, $D^\alpha=\{z\colon (\alpha,z)\in D\}$, and $\mathcal{A}=\{\alpha\colon D^\alpha\neq\emptyset\}$. We define $\nabla_z\phi(\alpha,z) = \nabla \phi^\alpha(z)=(\partial_{z_1}\phi^\alpha(z),\partial_{z_2}\phi^\alpha(z))$ to be the gradient in the $z$ variables.

\begin{definition}\label{defnMonotonicity}
We say a function $g\colon X\to S^1$ has \emph{restricted range} if $g(X)$ is contained in $\{(\cos\theta,\sin\theta) \colon \theta \in [c,c+\pi)\}$ for some $c\in\RR$.

Let $X\subset\RR$ and let $g\colon X\to S^1$ have restricted range. We say that $g$ is strictly increasing (resp.~decreasing, monotone) if the corresponding map $g\colon X\to [c,c+\pi)$ is strictly increasing (resp.~decreasing, monotone).
\end{definition}
\begin{remark}
Note that Definition \ref{defnMonotonicity} is stronger than the corresponding definition from \cite{CFMMT}, i.e.~every function that satisfies Definition \ref{defnMonotonicity} is strictly monotone in the sense of \cite{CFMMT}, but the converse does not hold.
\end{remark}

\begin{definition}\label{angleMonotonicityDefn}
We say $\phi$ is \emph{non-stationary} if $\nabla\phi^\alpha\neq 0$ for all $(\alpha,z)\in D$. We say $\phi$ satisfies the \emph{angle monotonicity condition} if $\nabla \phi^\alpha(z)/\Vert \nabla \phi^\alpha(z)\Vert\colon D\to S^1$ has restricted range, and for each $z$, the function $\alpha\mapsto \nabla \phi^\alpha(z)/\Vert \nabla \phi^\alpha(z)\Vert$ is strictly monotone, in the sense of Definition \ref{defnMonotonicity}.
\end{definition}

The following is \cite[Theorem 1.2]{CFMMT}.
\begin{theorem}[\cite{CFMMT}, Theorem 1.2]\label{CFMMT_thm}
Let $D\subset\RR^3$ be open, and let the sets $\mathcal{A}$ and $D^\alpha,\ \alpha\in\mathcal{A}$ be as defined above. Suppose that for each $\alpha \in \mathcal{A}$, $D^\alpha$ is convex. Suppose that $\phi\colon D\to\RR$ is $C^1$, non-stationary, and satisfies the angle monotonicity condition. Suppose furthermore that $\phi$ and $\nabla_z\phi$ both extend continuously to the closure $\overline D$, and $\nabla_z \phi$ is nonzero on $\overline D$. 

For each $\alpha\in\mathcal{A}$, let $F^\alpha\subset \phi^\alpha(D^\alpha)$. Suppose that $\bigcup_{\alpha\in \mathcal{A}}(\{\alpha\}\times F^\alpha)$ is measurable. Then there exists a Borel set $G\subset\RR^2$ such that for a.e.~$\alpha\in\mathcal{A}$, we have
\[
\phi^\alpha(G\cap D^\alpha)\supset F^\alpha\qquad\textrm{and}\qquad |\phi^\alpha(G\cap D^\alpha) \backslash F^\alpha|=0.
\]
\end{theorem}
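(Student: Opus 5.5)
The plan is to follow Falconer's original scheme from \cite{Fal86}, replacing linear projections by the level-set foliations of $\phi^\alpha$. Non-stationarity means that for each $\alpha$ the level sets $\{\phi^\alpha=c\}$ foliate $D^\alpha$ by $C^1$ curves. The angle monotonicity condition then says that, at a fixed point $z$, the normal direction of the leaf through $z$ rotates strictly monotonically in $\alpha$, as it does for orthogonal projections. The target is to produce $G$ with $\phi^\alpha(G\cap D^\alpha)\supset F^\alpha$ and with excess of measure zero, for a.e.\ $\alpha$.

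\emph{Step 1: reduction to compact data.} By measurability of $\bigcup_\alpha\{\alpha\}\times F^\alpha$ and inner regularity, write the set as a null set plus a countable union of compact sets $K_j$ with compact fibres $K_j^\alpha$. It suffices to treat each $K_j$ and take a countable union of the resulting $G_j$; the excess of the union is still null for a.e.\ $\alpha$, since each excess is. The a.e.\ loss in $\alpha$ absorbs the null set only because the conclusion is allowed to fail on a null set of $\alpha$. The covering of $F^\alpha$ itself also needs care here: I expect to handle it by a final countable-union argument like the one in Falconer's paper, possibly allowing the conclusion to fail only for a null set of $\alpha$.

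\emph{Step 2: the basic "Venetian blind" building block.} The analytic core is a local lemma. Let $Q$ be a small convex piece of $D^\alpha$ and let $J$ be a short $\alpha$-interval. For every $\eta>0$ there is a finite union $B\subset Q$ of thin $C^1$ strips (nonlinear Venetian blinds, or a Davies-type efficient covering) with two properties. For every $\alpha\in J$, the image $\phi^\alpha(B)$ covers all of $\phi^\alpha(Q)$ except a set of measure $<\eta$. For $\alpha$ outside an $\eta$-neighbourhood of $J$, the image $\phi^\alpha(B)$ has measure $<\eta$.

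I would build $B$ iteratively. At the first stage, replace $Q$ by many thin strips aligned with the level curves of $\phi^{\alpha_0}$, where $\alpha_0$ is the centre of $J$. Such strips project fully under $\phi^{\alpha}$ for $\alpha$ near $\alpha_0$. For $\alpha$ far from $\alpha_0$, each strip is transversal to the $\phi^\alpha$-leaves, because of monotonicity and restricted range, so its image is small. Then recurse inside each strip, using the convexity of $D^\alpha$ and the $C^1$ control of $\nabla_z\phi$ up to $\overline D$ to keep the leaves nearly straight at small scales. Uniform nonvanishing of $\nabla_z\phi$ on the compact $\overline D$ gives uniform constants in this linearization.

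\emph{Step 3: assembling $G$.} Discretize $\alpha$ into intervals $J$ and the range of $\phi^\alpha$ into small intervals. For each pair, keep the blind pieces whose $\phi^\alpha$-images meet $K_j^\alpha$ and discard the rest, obtaining $G_k$ at scale $2^{-k}$. Then set $G=\limsup G_k$, or a $\bigcap\bigcup$ combination, arranged so that two properties hold. First, by Borel--Cantelli in $\alpha$, a.e.\ $\alpha$ sees excess images of total measure $\sum 2^{-k}$-small, giving $|\phi^\alpha(G\cap D^\alpha)\setminus F^\alpha|=0$. Second, each point of $K_j^\alpha$ is covered at infinitely many scales, with compactness ensuring the limit still covers.

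The main obstacle I expect is Step 2. In the linear case the Venetian blind relies on exact translation invariance and straight lines, whereas here the foliation changes from point to point. The quantitative transversality coming from angle monotonicity must survive the iteration uniformly. The first thing I would try is to linearize $\phi$ on cells of size $\rho$, with $C^1$ error $o(1)$ as $\rho\to0$. On each cell I would apply the linear Falconer/Davies blind and show that the accumulated curvature errors across stages are summable.
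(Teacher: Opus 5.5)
The paper does not prove this theorem. It is quoted from \cite{CFMMT} and used only as a black box in the proof of Proposition~\ref{prescribedProjectionsTwoDirections}, so there is no in-paper argument to compare against. Judged on its own, your outline has a genuine gap at the hardest part of the statement: the \emph{exact} containment $\phi^\alpha(G\cap D^\alpha)\supset F^\alpha$.

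\textbf{Step~1 discards what must be covered.} You approximate $F=\bigcup_\alpha\{\alpha\}\times F^\alpha$ from inside by compact sets, drop a null remainder $N$, and say the a.e.\ exception in $\alpha$ absorbs it. It does not. Fubini gives $|N^\alpha|=0$ for a.e.\ $\alpha$, not $N^\alpha=\emptyset$, and every point of $N^\alpha$ must still lie in the image. Take $F^\alpha=\{c(\alpha)\}$ with $c$ continuous, say. Then $F$ is null and your reduction may discard all of it. Yet the theorem demands a $G$ meeting the level curve $\{\phi^\alpha=c(\alpha)\}$ for a.e.\ $\alpha$ while $|\phi^\alpha(G\cap D^\alpha)|=0$ for a.e.\ $\alpha$. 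That is a Kakeya/Nikodym-type object, which is the real content here, and no final countable-union argument recovers it. The reduction must go the other way. Take a $G_\delta$ set $H\supset F$ with $|H\setminus F|=0$, inside the open set $\{(\alpha,c)\colon c\in\phi^\alpha(D^\alpha)\}$. By Fubini $|H^\alpha\setminus F^\alpha|=0$ for a.e.\ $\alpha$, so it suffices to treat $H$, and every point of $H^\alpha$ must now be hit.

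\textbf{Step~3 does not deliver containment either.} Your blinds cover $\phi^\alpha(Q)$ only up to measure $\eta$, so a given point may be missed at every stage. Even if $c\in\phi^\alpha(G_k)$ for infinitely many $k$, the preimages can be different points, so $c$ need not lie in $\phi^\alpha(\limsup_k G_k)$. Compactness does not rescue this for a non-nested sequence. What is needed is the following:
\begin{itemize}
\item a blind lemma with exact coverage for every $\alpha$ in the good range (slats whose $\phi^\alpha$-images overlap);
\item applied inside the previous stage's pieces, and organized along nested parameter boxes exhausting open sets $U_k$ with $H=\bigcap_k U_k$.
\end{itemize}
Then each $(\alpha,c)\in H$ is tracked by compact pieces $K_1\supset K_2\supset\cdots$ with $c\in\phi^\alpha(K_k)$ for all $k$. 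A limit point of the preimages lies in $\bigcap_k K_k\subset G$ and maps to $c$. Your Borel--Cantelli bound on the excess then goes through essentially as written.

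\textbf{Step~2 has the geometry reversed.} A thin strip along a level curve of $\phi^{\alpha_0}$ has $\phi^{\alpha_0}$-image of length comparable to its width. So strips aligned with the $\phi^{\alpha_0}$-foliation have \emph{small} images for $\alpha$ near $\alpha_0$. For $\alpha$ away from $\alpha_0$, the $\phi^\alpha$-leaves cross each strip transversally, so its image is \emph{long}, not small. In a Venetian blind the slats are aligned with the leaves of a parameter $\beta$ you want to suppress. They are staggered so that their $\phi^\alpha$-images, $\alpha\in J$, overlap and reproduce $\phi^\alpha(Q)$. One such step suppresses only a small neighbourhood of $\beta$. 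Handling the whole complement of $J$ therefore needs a long chain of nested steps at rapidly shrinking scales. With only $C^1$ regularity and qualitative (strict) monotonicity, each stage's scale must be chosen from moduli of continuity on $\overline D$. That bookkeeping is where the actual work of a nonlinear blind lies, and your proposal does not yet address it.
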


\begin{proposition}\label{prescribedProjectionsTwoDirections}
Let $U\subset\RR^2$ be a connected, open set. Let $I_1,I_2\subset\RR$ be open intervals, and let $\phi\colon I_1\times U\to\RR$, $\psi\colon I_2\times U\to\RR$ be analytic. Suppose that $\nabla \phi^\alpha(z)\neq 0$ on $I_1\times U$, and $\partial_\alpha \big(\frac{\nabla \phi^\alpha(z)}{\Vert \nabla \phi^\alpha(z)\Vert}\big)\not\equiv 0$ on $I_1\times U$, and similarly for $\psi$.

Then there exists a Borel set $G\subset U$ and an open interval $I_1'\subset I_1$ so that
\begin{equation}\label{conclusionOfPropPrescribedProjectionsTwoDirections}
\int_{I_1'}|\phi^\alpha(G)|d\alpha = 0,\qquad\textrm{and}\qquad\int_{I_2}|\psi^\alpha(G)|d\alpha > 0.
\end{equation}
\end{proposition}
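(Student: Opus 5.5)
The plan is to apply Theorem \ref{CFMMT_thm} to $\phi$ on a small localized domain, with prescribed sets $F^\alpha=\emptyset$. This gives a set $G_0$ with null $\phi^\alpha$-projections. We then enlarge $G_0$ so that its $\psi^\alpha$-projections have positive measure. The natural way to do this is to prescribe the $\phi$-projections not as empty sets but as null sets that are nevertheless large enough to force positive $\psi$-projections.

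\textbf{Step 1: localization.} By analyticity, $\partial_\alpha\big(\nabla\phi^\alpha/\Vert\nabla\phi^\alpha\Vert\big)$ is nonzero on an open dense subset of $I_1\times U$. Choose a point $(\alpha_0,z_0)$ where it is nonzero, and where also $\nabla\phi^{\alpha_0}(z_0)$ is not parallel to $\nabla\psi^{\beta}(z_0)$ for some $\beta$. This is possible because, for fixed $z_0$, the direction of $\nabla\psi^\beta(z_0)$ genuinely varies in $\beta$, again by the analogous nondegeneracy hypothesis and analyticity; so it cannot equal the direction of $\nabla\phi^{\alpha_0}(z_0)$ for all $\beta$ in an interval. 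Shrink to a small open interval $I_1'\ni\alpha_0$, a small disk $B\ni z_0$ and an interval $J\subset I_2$ on which the following hold:
\begin{itemize}
\item[(a)] the direction of $\nabla\phi^\alpha$ has restricted range and is strictly monotone in $\alpha$;
\item[(b)] $\nabla\phi^\alpha\wedge\nabla\psi^\beta\neq0$ for all $\alpha\in I_1'$, $\beta\in J$, $z\in B$.
\end{itemize}
Then $D=I_1'\times B$ has convex slices, and the hypotheses of Theorem \ref{CFMMT_thm} hold, including continuous extension to the closure after shrinking.

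\textbf{Step 2: the construction.} For each $\alpha\in I_1'$, let $F^\alpha\subset\phi^\alpha(B)$ be a fixed measurable null set, for instance a product-measurable choice such as the translate of a null set. Theorem \ref{CFMMT_thm} yields a Borel $G\subset B$ whose $\phi^\alpha$-projection equals $F^\alpha$ up to null sets, so that $\int_{I_1'}|\phi^\alpha(G)|d\alpha=0$. The difficulty is that the theorem gives no lower bound on $\psi^\beta(G)$: the set $G$ produced might even be empty. So the prescribed sets must force $G$ to be large.

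Fix a single $\alpha_1\in I_1'$. Since $\phi^{\alpha}(G\cap B)\supset F^{\alpha}$ for a.e.\ $\alpha$, choosing $F^\alpha=\phi^\alpha(B)\cap(\text{a fixed null set of full structure})$ does not obviously help. The better route is a duality in the roles of $\phi$ and $\psi$.

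\textbf{Main approach.} Apply Theorem \ref{CFMMT_thm} to the combined family on the two-interval parameter set $I_1'\sqcup J$, reparametrized as a single interval $\mathcal{A}$. Prescribe $F^\alpha=\emptyset$ for $\alpha\in I_1'$ and $F^\beta=\psi^\beta(B)$ (full) for $\beta\in J$. The conclusion then gives exactly \eqref{conclusionOfPropPrescribedProjectionsTwoDirections}.

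The obstacle is the angle monotonicity condition for the concatenated family. The directions of $\nabla\phi^\alpha$ for $\alpha\in I_1'$ and of $\nabla\psi^\beta$ for $\beta\in J$ must together lie in a half-circle, and must be ordered monotonically across the junction.

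\textbf{Step 3: arranging monotonicity.} Condition (b), together with shrinking the neighbourhoods, makes the two direction arcs disjoint and small. If $\psi$'s directions turn the opposite way, reverse the orientation of $\beta$, replacing $\beta$ by $-\beta$. Then concatenate the intervals in the appropriate order. Finally, glue the two pieces into one domain $D\subset\RR^3$ by placing $J$ after $I_1'$ with a gap. The theorem does not require $\mathcal{A}$ to be connected; if it did, we would interpolate the phase across the gap by a monotone family, for instance a linear function rotating between the directions. With this, Theorem \ref{CFMMT_thm} applies and yields $G$.

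\textbf{Expected difficulty.} The hard step is verifying the global monotonicity and restricted-range hypotheses uniformly in $z\in B$. This requires the direction arcs to remain disjoint and ordered for every $z$ in $B$, which follows from continuity once $B$ is small enough.
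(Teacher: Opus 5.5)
Your main approach is exactly the paper's proof: localize $\phi$ and $\psi$ so their gradient directions lie in disjoint small arcs of a common half-circle, reparametrize so both turn the same way, glue them into one family over a disconnected parameter set, and apply Theorem \ref{CFMMT_thm} with $F^\alpha=\emptyset$ on the $\phi$-part and $F^\beta=\psi^\beta(B)$ on the $\psi$-part (the Step 2 detour is unnecessary). One small repair: the direction of $\nabla\psi^\beta(z_0)$ varies in $\beta$ only for generic $z_0$, namely those with $\partial_\beta\big(\nabla\psi^\beta(z_0)/\Vert\nabla\psi^\beta(z_0)\Vert\big)\neq 0$ for some $\beta$ (a full-measure set), so you must choose $z_0$ there and take $\beta_0$ with this derivative nonzero at $(\beta_0,z_0)$, which also gives the monotonicity of $\psi$ on $J\times B$ that Step 3 uses; and the measurability hypothesis on $\bigcup_\beta\{\beta\}\times\psi^\beta(B)$ holds because this set is open.
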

\begin{proof}
Let $X_1 = \{(\alpha,z) \in I_1 \times U : \partial_\alpha \big(\frac{\nabla \phi^\alpha(z)}{\Vert \nabla \phi^\alpha(z)\Vert}\big) \neq 0\}$. Note that if $(\alpha_0,z_0) \in X_1$, then there exists a neighbourhood of $(\alpha_0,z_0)$ on which $\alpha\mapsto\frac{\nabla \phi^\alpha(z)}{\Vert \nabla \phi^\alpha(z)\Vert}$ is strictly monotone (in the sense of Definition \ref{defnMonotonicity}) for all $z$ near $z_0$. A similar statement holds for $X_2 = \{(\alpha,z) \in I_2 \times U : \partial_\alpha \big(\frac{\nabla \psi^\alpha(z)}{\Vert \nabla \psi^\alpha(z)\Vert}\big) \neq 0\}$.

Let $\pi : \RR^3 \to \RR^2$ denote the projection to the last 2 variables. By analyticity, the set $X_1$ is open and has full measure in $I_1 \times U$, and hence $\pi(X_1)$ has full measure in $U$. Similarly, the set $X_2$ is open and has full measure in $I_2 \times U$, and hence $\pi(X_2)$ has full measure in $U$. 

Let $u \in \pi(X_1) \cap \pi(X_2)$ and select points $(\alpha_1,u)\in X_1$ and $(\alpha_2,u)\in X_2$ so that  $\frac{\nabla \phi^{\alpha_1}(u)}{\Vert \nabla \phi^{\alpha_1}(u)\Vert}\not\in \{\pm \frac{\nabla \psi^{\alpha_2}(u)}{\Vert \nabla \psi^{\alpha_2}(u)\Vert} \}$. We can do this since $\alpha\mapsto\frac{\nabla \psi^\alpha(u)}{\Vert \nabla \psi^\alpha(u)\Vert}$ is locally strictly monotone.

Since $(\alpha_1,u)$ is an interior point of $X_1$ and similarly for $(\alpha_2,u)$, if we select a sufficiently small ball $U'\subset U$ centered at $u$ and sufficiently small neighbourhoods $I_1'\subset I_1$ of $\alpha_1$ and $I_2'\subset I_2$ of $\alpha_2$, then we can ensure that $\overline{I_1'\times U'}\subset X_1$ and $\overline{I_2'\times U'}\subset X_2$. After shrinking the sets $I_1',I_2'$ and $U'$ if needed, we may suppose that the images of $\frac{\nabla \phi^\alpha(z)}{\Vert \nabla \phi^\alpha(z)\Vert}$ and $\frac{\nabla \psi^\alpha(z)}{\Vert \nabla \psi^\alpha(z)\Vert}$ are contained in disjoint circular arcs that have positive separation. Since $\frac{\nabla \phi^{\alpha_1}(u)}{\Vert \nabla \phi^{\alpha_1}(u)\Vert}\not\in \{\pm \frac{\nabla \psi^{\alpha_2}(u)}{\Vert \nabla \psi^{\alpha_2}(u)\Vert} \}$, we can also ensure that these two circular arcs are contained in $\{(\cos\theta,\sin\theta) \colon \theta \in [c,c+\pi)\}$ for some $c\in\RR$. In particular, the restriction of $\phi$  to $I_1'\times U'$ satisfies the conditions in the first paragraph of Theorem \ref{CFMMT_thm}, and similarly for the restriction of $\psi$ to $I_2'\times U'$.

We have that $\alpha\mapsto \frac{\nabla \phi^\alpha(z)}{\Vert \nabla \phi^\alpha(z)\Vert}$ is strictly increasing or decreasing for each $z\in U'$, and similarly for $\alpha\mapsto \frac{\nabla \psi^\alpha(z)}{\Vert \nabla \psi^\alpha(z)\Vert}$. 

Suppose that all points on the arc containing the image of $\frac{\nabla \phi}{\Vert \nabla \phi\Vert}$ are ``smaller'' (in the ordering coming from Definition \ref{defnMonotonicity}) than the points on the arc containing the image of $\frac{\nabla \psi^\alpha(z)}{\Vert \nabla \psi^\alpha(z)\Vert}$. Let $I_1' = (a,b)$. By pre-composing $\phi^\alpha(z)=\phi(\alpha,z)$ with a map of the form $\alpha\mapsto a + (b-a)\alpha$ or $\alpha\mapsto b-(b-a)\alpha$, we obtain a map $\tilde\phi \colon (0,1) \times U' \to \RR$ with the property that $\alpha\mapsto \frac{\nabla \tilde\phi^\alpha(z)}{\Vert \nabla \tilde \phi^\alpha(z)\Vert}$ is strictly increasing for each $z\in U'$. Perform a similar procedure to produce a map $\tilde\psi$ with domain $(2,3)\times U'$ so that $\alpha\mapsto \frac{\nabla \tilde\psi^\alpha(z)}{\Vert \nabla \tilde \psi^\alpha(z)\Vert}$ is strictly increasing for each $z\in U'$. 

If instead all points on the arc containing the image of $\frac{\nabla \phi}{\Vert \nabla \phi\Vert}$ are ``larger'' than the points  on the arc containing the image of $\frac{\nabla \psi^\alpha(z)}{\Vert \nabla \psi^\alpha(z)\Vert}$, then perform a similar procedure to obtain maps $\tilde\phi\colon (0,1)\times U'\to\RR$ and $\tilde\psi\colon (2,3)\times U'\to\RR$  with the property that $\alpha\mapsto \frac{\nabla \tilde\phi^\alpha(z)}{\Vert \nabla \tilde \phi^\alpha(z)\Vert}$ and $\alpha\mapsto \frac{\nabla \tilde\psi^\alpha(z)}{\Vert \nabla \tilde \psi^\alpha(z)\Vert}$ are strictly decreasing for each $z\in U'$.

Define
\[
\zeta(\alpha,z)=\left\{\begin{array}{ll}
\tilde\phi(\alpha, z),\ & (\alpha,z)\in (0,1)\times U',\\
\tilde\psi(\alpha, z),\ & (\alpha,z)\in (2,3)\times U'.\\
\end{array}
\right.
\]
We selected the maps $\tilde\phi$ and $\tilde\psi$ so that $\zeta$ satisfies the angle monotonicity condition. $U'$ is a ball, so it is convex. Since $\overline{I_1'\times U'} \subset I_1 \times U$ and $\overline{I_2'\times U'} \subset I_2 \times U$, both $\zeta$ and $\nabla_z \zeta$ have continuous extensions to the closure, and $\nabla_z\zeta \neq 0$ on the closure. Thus $\zeta$ satisfies the hypotheses of Theorem \ref{CFMMT_thm}. Select $F^\alpha = \tilde\psi^\alpha(U')$ for $\alpha\in (2,3)$. Since $\nabla_z \tilde\psi \neq 0$, the map $(\alpha,z) \mapsto (\alpha, \tilde\psi(\alpha,z))$ is an open map. Thus $\{(\alpha, p)\colon \alpha\in(2,3),\ p\in \tilde\psi^\alpha(U')\}$ (which is the image of $(2,3) \times U'$) is open and therefore measurable. Select $F^\alpha = \emptyset$ for $\alpha\in (0,1)$. Apply Theorem \ref{CFMMT_thm} to $\zeta\colon \big((0,1)\cup(2,3)\big)\times U'\to\RR$ with this choice of sets $\{F^\alpha\}$. Denote the output by $G$; abusing notation, we will replace $G$ with $G\cap U'$. Then $|\phi^\alpha(G)|=0$ for a.e.~$\alpha\in I_1'$, and $|\psi^\alpha(G)|>0$ for a.e.~$\alpha\in I_2'$. 
\end{proof}


\section{Tame geometry}\label{semiAnalyticGeomSection}
In this section, we will introduce some tools that will allow us to break sets of the form $\{x\colon f(x)=0\}$ or $\{x\colon f(x)<0\}$ (here $f$ is an analytic function) into simpler pieces. This will be a key ingredient in the proof of Proposition \ref{breakIntoConstantMpAndMc}, which we will prove at the end of this section. We begin with several definitions. 

\begin{definition}
A \emph{restricted analytic function} on $\RR^n$ is a function of the form
\[
g(x)=\left\{\begin{array}{ll} 
f(x),&\ x\in [0,1]^n,\\
0,& \textrm{otherwise}\end{array}\right.,
\]
where $f$ extends to an analytic function on a neighbourhood of $[0,1]^n$.
\end{definition}

\begin{definition}
A set $S\subset\RR^n$ is called a basic definable set (in the o-minimal structure $\RR_{\operatorname{an}}$) if it is of the form 
\begin{equation}\label{basicDefinableSet}
\{x\in\RR^n\colon f_1(x)=0,\ldots,f_k(x)=0,\ g_1(x)>0,\ldots,g_\ell(x)>0\},
\end{equation}
where $f_1,\ldots,f_k$ and $g_1,\ldots,g_\ell$ are polynomials (with real coefficients) or restricted analytic functions.
\end{definition}

\begin{definition}
 A set $S$ is called a \emph{definable set} (in the o-minimal structure $\RR_{\operatorname{an}}$) if it can be obtained from basic definable sets after applying finitely many operations of the following form: finite unions, finite intersections, complements, Cartesian products, coordinate projections. In particular, every semi-algebraic set is a definable set. A function $f\colon\RR^n\to\RR$ is called \emph{definable} if its graph is a definable set. 
\end{definition}

See \cite{Dries09} for an introduction to the theory of definable sets, and \cite{DMM94} and \cite[\S2.5, Example (4)]{DM96} for a discussion of $\RR_{\operatorname{an}}$.

\subsection{The analytic cell decomposition}
We recall the definition of an analytic cell from \cite[\S 4.1]{DM96}  (in that section the authors define a $C^p$ cell. In the subsequent discussion in \S4.2 they discuss the analytic case $p=\omega$). See also \cite[Chapter 3, \S2 ]{Dries09}. Here and in what follows, we are working in $\RR_{\operatorname{an}}$.
\begin{definition}\label{defnCell}
In what follows, let $(i_1,\ldots,i_m)$ be a sequence of 0s and 1s of length $m$. When $m=1$, an (analytic) $(0)$-cell is a point $\{z_1\}\subset \RR$. An (analytic) $(1)$-cell is an open interval $(a,b)\subset\RR$ (in this definition, $a=-\infty$ and/or $b=\infty$ are allowed). Now we define an analytic  $(i_1,\ldots,i_{m+1})$-cell by induction on $m$ as follows. Suppose that $(i_1,\ldots,i_m)$ cells are already defined. An analytic $(i_1,\ldots,i_m,0)$-cell is a graph $\operatorname{graph}(f)\subset\RR^{m+1}$ of a definable function $f\colon D\to\RR$, where $D\subset\RR^m$ is a $(i_1,\ldots,i_m)$-cell, and $f$ extends to an analytic function on a neighbourhood of $D$. An analytic $(i_1,\ldots,i_m,1)$-cell is a set of the form
\[
\{(z, y)\in D\times\RR\colon f(z)<y<g(z)\},
\]
where $D$ is a $(i_1,\ldots,i_m)$-cell, and one of the following holds
\begin{itemize}
    \item $f,g\colon D\to\RR$ are definable and analytic on a neighbourhood of $D$, and $f<g$ on $D$.
    \item $f$ as above and $g=\infty$,
    \item $g$ as above and $f=-\infty$,
    \item $f=-\infty$ and $g=\infty$, i.e. the cell is $D\times\RR$. 
\end{itemize}
\end{definition}
Note that a $(1,\ldots,1)$-cell in $\RR^m$ is an open set (in the Euclidean topology). All other cells have dimension at most $m-1$. We call a $(1,\ldots,1)$-cell an \emph{open} cell. It is straightforward to verify that if $D$ is an $(i_1,\ldots,i_m)$-cell and $\sum_{j=1}^m i_j=d$, then $\dim D=d$, and there is a definable, analytic bijection (with definable, analytic inverse) $G\colon (0,1)^d\to D$.

Note that in Definition \ref{defnCell}, the ordering of the coordinates $z_1,\ldots,z_n$ is important; a set that is a cell for the ordering $z_1,\ldots,z_n$ might cease to be a cell if the coordinates are permuted.

The following is the analytic cell decomposition theorem; see \cite{DM96} Section 4.2.
\begin{proposition}\label{cellDecomposition}
A definable set $D\subset\RR^n$ can be partitioned into a finite union of analytic cells.
\end{proposition}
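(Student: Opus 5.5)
The plan is to follow the strategy of \cite{DM96} rather than build anything new. The argument has three layers, carried out in this order. First, o-minimality of $\RR_{\operatorname{an}}$. Second, the general cell decomposition theorem for o-minimal expansions of the real field, which yields cells in which the bounding functions are merely continuous (or $C^p$). Third, an upgrade of these cells to analytic cells in the sense of Definition \ref{defnCell}.

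\textbf{Step 1: o-minimality.} We would take as input that every definable subset of $\RR$ is a finite union of points and open intervals. For $\RR_{\operatorname{an}}$ this rests on two facts:
\begin{itemize}
\item Gabrielov's theorem of the complement, in the form that subanalytic sets are closed under complement.
\item The Denef--van den Dries quantifier elimination for $\RR_{\operatorname{an}}$ in the language with restricted analytic functions and division.
\end{itemize}
Together these show that every definable subset of $\RR$ is a boolean combination of zero sets and positivity sets of one-variable functions that are piecewise analytic on finitely many intervals. Such sets have finitely many connected components. We would not reprove this; it is the standard input.

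\textbf{Step 2: continuous cell decomposition.} Next we run the usual induction on $n$ from o-minimality. The key statement is the monotonicity theorem in one variable, together with its uniform version: for a definable family $f(x,\cdot)$, the number of pieces on which $f(x,\cdot)$ is continuous and monotone is bounded uniformly in $x$.

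With this in hand, for a definable $D\subset\RR^{n+1}$ we consider the finitely many boundary points of each fiber $D_x\subset\RR$. We then partition $\RR^n$, using the inductive hypothesis, into cells on which the following hold:
\begin{itemize}
\item the number of boundary points of $D_x$ is constant;
\item the ordered boundary points are given by definable functions;
\item these functions are continuous.
\end{itemize}
Continuity is obtained by decomposing again along the definable set of discontinuity points, which has lower dimension. The graphs of these functions and the bands between them are the required cells.

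\textbf{Step 3: analytic upgrade.} This is the main obstacle. We must show that each definable function $f\colon C\to\RR$ on a cell is analytic off a definable set of smaller dimension, which can then be absorbed by further decomposition.

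For $C^p$ regularity this is the standard argument. Partial derivatives exist off a lower-dimensional definable set, by the one-variable monotonicity theorem applied to difference quotients, and they are themselves definable, so one can iterate.

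Analyticity is specific to $\RR_{\operatorname{an}}$. I would first try the route of \cite{DM96}. Quantifier elimination shows that every definable function is piecewise given by terms built from restricted analytic functions, division and roots. Combined with the Weierstrass preparation theorem, this shows that each definable function is, locally off a lower-dimensional set, an analytic solution of an analytic implicit equation with nonvanishing derivative. The analytic implicit function theorem then gives analyticity there.

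Finally, we redo the induction of Step 2 while also removing the nonanalyticity locus at each stage. We must also ensure that the bounding functions extend analytically to a neighbourhood of the base cell, as Definition \ref{defnCell} requires. This is achieved by passing to the analytic cell parametrization $G\colon (0,1)^d\to D$ and refining once more.
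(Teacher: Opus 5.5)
Your outline is correct and follows essentially the same route as the paper, which gives no argument of its own and simply cites \cite[\S4.2]{DM96}. Your three layers (o-minimality of $\RR_{\operatorname{an}}$, the standard $C^0$/$C^p$ cell decomposition, and the fact that definable functions are analytic off a lower-dimensional definable set) are the standard ingredients behind that citation.

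Two minor remarks. First, the Weierstrass-preparation detour in Step 3 is unnecessary: once definable functions are known to be piecewise given by terms, analyticity off a lower-dimensional set follows by induction on term complexity. Second, the neighbourhood-extension requirement in Definition \ref{defnCell} is automatic, with no further refinement needed. Every cell is the graph of an analytic map over an open cell in the coordinate subspace of its $1$-indices. So a function analytic on the cell extends trivially by making it independent of the remaining coordinates.
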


We will use Proposition \ref{cellDecomposition} in several places. For example, we will partition the complement of a bad set $\{f= 0\}$ into pieces that are ``vertically connected,'' i.e.~each piece meets every vertical line in at most one interval. The precise formulation is as follows.

\begin{corollary}\label{partitionIntoVerticalConnectedSets}
Let $D\subset\RR^3$ be open and definable, and let $f\colon D\to\RR$ be analytic and definable. Then we can partition the set $\{z\in D \colon f(z) \neq 0\}$ into a definable set $Z\subset D$ of dimension at most $2$, plus a finite union of open connected sets, each of which is vertically connected, i.e.~they intersect each vertical line $\{(z_1,z_2)\}\times\RR$ in a connected set. 
\end{corollary}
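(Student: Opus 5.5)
We apply the analytic cell decomposition (Proposition \ref{cellDecomposition}) to the set $S=\{z\in D\colon f(z)\neq 0\}$, taking the coordinate order $(z_1,z_2,z_3)$ so that $z_3$ is the last (``vertical'') coordinate. $S$ is definable, being defined by a definable function, and it is open, since $f$ is continuous and $D$ is open.

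Partition $S$ into finitely many analytic cells $C_1,\ldots,C_N$. Each cell has some type $(i_1,i_2,i_3)$. Let $Z$ be the union of those cells whose type is not $(1,1,1)$. Each such cell has dimension $i_1+i_2+i_3\le 2$, so $Z$ is definable of dimension at most $2$. The remaining cells are open $(1,1,1)$-cells.

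It then remains to check that each open cell is connected and vertically connected, and we argue by induction on the cell structure. A $(1)$-cell is an open interval, hence connected. If $D'$ is a connected $(1)$ or $(1,1)$-cell, then
\[
C=\{(w,y)\colon w\in D',\ f_-(w)<y<f_+(w)\},
\]
with $f_\pm$ continuous (or $\pm\infty$) and $f_-<f_+$, is homeomorphic to $D'\times(0,1)$. The homeomorphism is $(w,s)\mapsto (w,(1-s)f_-(w)+sf_+(w))$, modified by an $\arctan$-type reparametrization when one endpoint is infinite. Hence $C$ is connected. Applying this with $D'$ a $(1,1)$-cell shows every $(1,1,1)$-cell is connected.

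For vertical connectedness, fix $(z_1,z_2)$. The intersection of $C$ with $\{(z_1,z_2)\}\times\RR$ is empty if $(z_1,z_2)\notin D'$. Otherwise it is $\{(z_1,z_2)\}\times (f_-(z_1,z_2),f_+(z_1,z_2))$, which is an interval and hence connected.

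There is no real obstacle here; the only care points are checking that $S$ is definable and that the $Z$-part has dimension $\le2$, both immediate from the cell decomposition.
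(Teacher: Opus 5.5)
Your proposal is correct and is essentially the argument the paper intends. The paper states this corollary without proof, as an immediate consequence of the analytic cell decomposition (Proposition \ref{cellDecomposition}) applied to $\{f\neq 0\}$: the lower-dimensional cells form $Z$, and the open $(1,1,1)$-cells are the connected, vertically connected pieces. Your homeomorphism with $D'\times(0,1)$, or equivalently the paper's remark that a $d$-dimensional cell is analytically diffeomorphic to $(0,1)^d$, supplies the connectedness check that the paper leaves implicit.
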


\subsection{B\^ocher's theorem and its consequences}
The following theorem of B\^ocher characterizes when $n$ analytic functions are linearly dependent.
\begin{theorem}[B\^ocher \cite{Boc00}]\label{BocherThm}
Let $I\subset\RR$ be an interval and let $f_1,\ldots,f_n$ be analytic functions from $I$ to $\RR$. Then $f_1,\ldots,f_n$ are linearly dependent if and only if the $n\times n$ Wronskian determinant
\begin{equation}
W(t) = \left| 
\begin{bmatrix}
f_1(t) & \cdots & f_n(t)\\
f_1'(t) & \cdots & f_n'(t)\\
f_1^{(n-1)}(t) & \cdots &  f_n^{(n-1)}(t)
\end{bmatrix}
\right|
\end{equation}
vanishes identically on $I$. 
\end{theorem}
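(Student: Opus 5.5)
We need to prove Bôcher's theorem: analytic $f_1,\ldots,f_n$ on an interval $I$ are linearly dependent if and only if their Wronskian vanishes identically. We argue by induction on $n$, with analyticity entering through the identity principle: zeros of a nonzero analytic function are isolated.

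\emph{The forward direction} is immediate. If $\sum c_j f_j\equiv 0$ with $c\neq 0$, then differentiating $k$ times gives $\sum c_j f_j^{(k)}\equiv 0$ for $k=0,\ldots,n-1$. So the fixed nonzero vector $c$ lies in the kernel of the Wronskian matrix at every $t$, and hence $W\equiv 0$.

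\emph{The converse} is proved by induction on $n$.
\begin{itemize}
\item For $n=1$, $W=f_1\equiv 0$ means $f_1$ is itself a dependent family.
\item For the inductive step, suppose $W\equiv 0$. If $f_1\equiv 0$ we are done, so assume otherwise. Its zero set is then discrete, and we restrict to a subinterval $J\subset I$ on which $f_1\neq 0$.
\item On $J$, set $g_j=f_j/f_1$. Using the standard identity $W(f_1g_1,\ldots,f_1g_n)=f_1^nW(g_1,\ldots,g_n)$, which follows from the Leibniz rule and column operations, we get $W(1,g_2,\ldots,g_n)\equiv 0$ on $J$.
\item Expanding along the first column gives $W(g_2',\ldots,g_n')\equiv 0$ on $J$. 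The $g_j'$ are analytic on $J$, so by induction there are constants $c_2,\ldots,c_n$, not all zero, with $\sum_{j\ge2} c_j g_j'\equiv 0$ on $J$.
\item Integrating, $\sum_{j\ge 2}c_jg_j\equiv -c_1$ for some constant $c_1$. Multiplying by $f_1$ gives $\sum_{j=1}^n c_jf_j\equiv 0$ on $J$, with $(c_1,\ldots,c_n)\neq 0$.
\end{itemize}

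The only real point to check is passing from $J$ back to $I$. The function $\sum c_jf_j$ is analytic on the interval $I$ and vanishes on the open subinterval $J$, so it vanishes identically on $I$ by the identity theorem. This is exactly where analyticity is needed: for merely smooth functions the statement fails, as shown by Peano's example $t^2$ and $t|t|$. The inductive hypothesis is applied on an interval $J$, so it must hold for arbitrary intervals, which it does.
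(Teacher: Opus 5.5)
Your proof is correct. The paper gives no proof of this statement: it quotes it from B\^ocher \cite{Boc00} and uses it as a black box, in Lemma \ref{analyticDependence}, Corollary \ref{corOfBocher}, and Case \ref{CaseMc=Mp+2} of Section \ref{ProofOfThmCharacterizationOfNikodymSetsSection}. So there is no paper route to compare against. What you give is the standard self-contained argument: divide by $f_1$, reduce to the Wronskian of $g_2',\ldots,g_n'$, integrate, and extend by the identity theorem. Its nontrivial half (independent $\Rightarrow W\not\equiv 0$) is exactly what those applications need.

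Two small remarks.
\begin{itemize}
\item In the paper's orientation, where rows index the order of differentiation, the Leibniz-rule identity comes from row operations, not column operations. Concretely, $M_{hg}=L\,M_g$ with $L$ lower triangular with diagonal entries $h$, so $\det M_{hg}=h^n\det M_g$.
\item Finding the subinterval $J$ needs only continuity of $f_1$, not discreteness of its zeros. Analyticity is therefore used only in the final extension from $J$ to $I$, which is precisely the step Peano's example shows cannot be dropped.
\end{itemize}
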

The next few results show how B\^ocher's theorem can help us understand the quantities $\mathfrak{r}_p^\gamma$ and $\mathfrak{r}_c^\gamma$ from Definition \ref{DefnTauAndB}. 

\begin{corollary}\label{corOfBocher}
Let $I\subset\RR$ be a closed interval, let $W\subset\RR^k$ be connected and open, and let $f_1,\ldots,f_n\colon I\times W\to\RR$ be analytic and definable. Let $m\leq n$. Then the set
\begin{align}\label{rankLeqM}
&\big\{ w\in W\colon \operatorname{rank}(f_1(\cdot,w),\ldots,f_n(\cdot,w))\leq m\big\}
\end{align}
is either all of $W$, or it is contained in a finite union of sets of the form $\{w\in W\colon G(w)=0\}$, where $G\colon W\to\RR$ is analytic, definable, and $\{w\in W\colon G(w)=0\}$ has dimension at most $k-1$.
\end{corollary}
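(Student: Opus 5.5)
We translate rank conditions into vanishing of Wronskian-type determinants via Theorem \ref{BocherThm}, and then use analyticity in $w$ together with connectedness of $W$. Fix $w\in W$. The rank of $f_1(\cdot,w),\ldots,f_n(\cdot,w)$ is at most $m$ if and only if every subfamily $f_{j_1}(\cdot,w),\ldots,f_{j_{m+1}}(\cdot,w)$ of size $m+1$ is linearly dependent. Each such $f_{j_i}(\cdot,w)$ is analytic on $I$, so by Theorem \ref{BocherThm} dependence holds exactly when the $(m+1)\times(m+1)$ Wronskian $W_J(t,w)$ vanishes identically in $t\in I$. (If $m\geq n$ the set is all of $W$, so assume $m<n$.) Hence the set \eqref{rankLeqM} equals
\[
\bigcap_{J}\{w\in W\colon W_J(t,w)=0 \textrm{ for all } t\in I\},
\]
where $J$ ranges over the $(m+1)$-element subsets of $\{1,\ldots,n\}$. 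Each $W_J$ is a polynomial in the $t$-derivatives of the $f_j$, so it is analytic on $I\times W$. It is also definable, since derivatives of definable analytic functions are definable.

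Next we dichotomize. Suppose the set is not all of $W$. Then there exist $J$ and a point $(t_0,w_0)\in I\times W$ with $W_J(t_0,w_0)\neq 0$. We define $G(w)=W_J(t_0,w)$ for this fixed $t_0$. Then $G$ is analytic on $W$, and it is definable, being the restriction of a definable function to the slice $\{t_0\}\times W$. Every $w$ in the set \eqref{rankLeqM} satisfies $W_J(t,w)\equiv 0$ in $t$, so in particular $G(w)=0$. Thus the set is contained in $\{G=0\}$, and a single function already suffices.

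It remains to check the dimension bound. Since $G(w_0)\neq 0$, $G$ is not identically zero, and $W$ is connected, the identity theorem shows that $\{G=0\}$ has empty interior in $W$. This set is definable, and a definable subset of $\RR^k$ with empty interior has dimension at most $k-1$. To see this, apply Proposition \ref{cellDecomposition}: a decomposition containing an open $(1,\ldots,1)$-cell would give nonempty interior. So $\dim\{G=0\}\leq k-1$.

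The main subtlety is the definability of $G$. We need $f_j$, and hence $W_J$, to be definable as a function on $I\times W$, and we need $t_0$ to be interior to $I$ (or one-sided derivatives to be used) so that analyticity of derivatives at $t_0$ is legitimate. Both are harmless: $I$ is closed with positive length, and if needed we may choose $t_0$ in the interior of $I$, since the set where $W_J\neq 0$ is open and nonempty.
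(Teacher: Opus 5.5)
Your proof is correct. The paper states this corollary without a written proof, treating it as an immediate consequence of B\^ocher's theorem. Your argument is exactly the routine verification it leaves implicit: take the Wronskians of all $(m+1)$-subfamilies, freeze a $t_0$ where one of them is nonzero at some $w$ outside the set, and then use the identity theorem on connected $W$ together with cell decomposition to get the dimension bound. Along the way it shows that a single $G$ already suffices.
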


\begin{corollary}\label{maxRankCor}
Let $I\subset\RR$ be a bounded closed interval. Let $W\subset\RR^2\times\RR^d$ be connected and open. Let $\gamma\colon I\times W\to\RR$ be analytic and definable. Then there are numbers $m_p$ and $m_c$, and an open, definable set
\[
D \subset \{(z,u)\in W\colon  \mathfrak{r}_p^\gamma(z,u)=m_p,\ \textrm{and}\ \mathfrak{r}_c^\gamma(z,u)=m_c\}
\]
whose complement $Z = W\backslash D$ has dimension at most $d+1$, and is a finite union of sets of the form $\{w\in W\colon G(w)=0\}$, where $G\colon W\to\RR$ is analytic and definable. 
\end{corollary}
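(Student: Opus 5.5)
The plan is to let $m_p$ and $m_c$ be the generic (maximal) values of the point rank and curve rank on $W$, and to use Corollary \ref{corOfBocher} to show that the locus where the ranks drop is small. First note that $\mathfrak{r}_p^\gamma$ and $\mathfrak{r}_c^\gamma$ are bounded by $d$ and $d+2$. Let $m_p$ be the largest value attained by $\mathfrak{r}_p^\gamma$ on $W$, and $m_c$ the largest value attained by $\mathfrak{r}_c^\gamma$. Apply Corollary \ref{corOfBocher} with $k=d+2$. For the point rank, take $f_i=\partial_{u_i}\gamma$ and $m=m_p-1$; for the curve rank, take the $d+2$ partial derivatives and $m=m_c-1$. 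These functions are analytic and definable because $\gamma$ is.

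By maximality, the set $\{\mathfrak{r}_p^\gamma\le m_p-1\}$ is not all of $W$. Corollary \ref{corOfBocher} therefore places it inside a finite union $Z_p$ of zero sets $\{G=0\}$, where each $G$ is analytic and definable and each zero set has dimension at most $d+1$. The same argument gives a finite union $Z_c$ of such zero sets containing $\{\mathfrak{r}_c^\gamma\le m_c-1\}$. If $m_p=0$, or if the rank never drops, the corresponding exceptional set is empty.

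Set $Z=Z_p\cup Z_c$ and $D=W\setminus Z$. Off $Z$ both ranks are at least their maxima, hence equal to them, so $D$ lies inside the set where $\mathfrak{r}_p^\gamma=m_p$ and $\mathfrak{r}_c^\gamma=m_c$. The set $D$ is open because $Z$ is a finite union of zero sets of continuous functions, which is relatively closed in $W$. The set $D$ is definable because $W$ and each $\{G=0\}$ are definable. The complement $Z$ is a finite union of sets of the form $\{G=0\}$, each of dimension at most $d+1$, so $\dim Z\le d+1$ since the dimension of a finite union of definable sets is the maximum of the dimensions.

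The main subtlety is definability of $W$ itself, which is needed for $D$ to be definable. The statement assumes $\gamma$ is definable on $I\times W$, and I would read this as including that $W$ is a definable set, which also matches the hypotheses of Corollary \ref{corOfBocher} as they are used here. Apart from this point the argument only checks that the hypotheses of Corollary \ref{corOfBocher} are met.
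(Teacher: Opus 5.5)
Your proof is correct and is the intended argument: the paper gives no separate proof and treats this as an immediate consequence of Corollary \ref{corOfBocher}, applied as you do to the maximal values of $\mathfrak{r}_p^\gamma$ and $\mathfrak{r}_c^\gamma$ with $m=m_p-1$ and $m=m_c-1$. The definability of $W$ that you flag is not an extra assumption: the graph of $\gamma$ is definable, so $I\times W$ (its projection) is definable, and hence so is $W$ (a coordinate projection of $I\times W$).
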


B\^ocher's theorem characterizes when a set of analytic functions $f_1,\ldots,f_n\colon I\times W\to\RR$ are linearly dependent for each fixed $w\in W$. If this is the case, then for each fixed $w\in W$, some of these functions can be written as a linear combination of others. However, the coefficients of this linear dependence will depend on the choice of point $w\in W$. The next result says that locally, we can ensure that this dependence is analytic.

\begin{lemma}\label{analyticDependence}
Let $I\subset\RR$ be a closed interval, let $W\subset\RR^k$ be bounded and open, and let $f_1,\ldots,f_n\colon I\times W\to\RR$ be analytic. Let $q\in W$, and suppose that 
\begin{equation}\label{rankConditionForG}
\operatorname{rank}(f_1(\cdot,w),\ldots,f_m(\cdot,w))=\operatorname{rank}(f_1(\cdot,w),\ldots,f_n(\cdot,w))=m\quad\textrm{for all}\ w\ \textrm{in a neighbourhood of}\ q.
\end{equation}

Then there exists a neighbourhood $W_1\subset W$ of $q$ and analytic functions $C_{i,j}\colon W_1\to\RR$ so that for each $i=m+1,\ldots,n$, we have
\[
f_i(t,w) = \sum_{j=1}^m C_{i,j}(w)f_j(t,w)\quad\textrm{for all}\ (t,w)\in I\times W_1.
\] 
\end{lemma}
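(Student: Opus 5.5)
The plan is to use the Wronskian of $f_1,\ldots,f_m$ to produce a square linear system whose coefficients are analytic in $w$, and then solve it by Cramer's rule.

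\textbf{Step 1: a nonvanishing Wronskian near $q$.} By \eqref{rankConditionForG}, the functions $f_1(\cdot,q),\ldots,f_m(\cdot,q)$ are linearly independent. By B\^ocher's theorem (Theorem \ref{BocherThm}), their $m\times m$ Wronskian
\[
\mathcal{W}(t,w)=\det\big(\partial_t^{i-1}f_j(t,w)\big)_{1\le i,j\le m}
\]
does not vanish identically in $t$ when $w=q$. So there is a point $t_0\in I$ with $\mathcal{W}(t_0,q)\neq 0$. Since $\mathcal{W}$ is analytic, hence continuous, in $(t,w)$, we can choose a neighbourhood $W_1$ of $q$ (contained in the neighbourhood where \eqref{rankConditionForG} holds) with $\mathcal{W}(t_0,w)\neq 0$ for all $w\in W_1$.

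\textbf{Step 2: define the coefficients.} For each $i\in\{m+1,\ldots,n\}$ and each $w\in W_1$, consider the $m\times m$ linear system
\[
\sum_{j=1}^m C_{i,j}(w)\,\partial_t^{k}f_j(t_0,w)=\partial_t^{k}f_i(t_0,w),\qquad k=0,\ldots,m-1.
\]
Its coefficient matrix has determinant $\mathcal{W}(t_0,w)\neq 0$. By Cramer's rule the unique solution $C_{i,j}(w)$ is a quotient of analytic functions of $w$ with nonvanishing denominator, so each $C_{i,j}$ is analytic on $W_1$.

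\textbf{Step 3: verify the identity.} Fix $w\in W_1$. Since the rank is $m$ and $f_1(\cdot,w),\ldots,f_m(\cdot,w)$ are independent, $f_i(\cdot,w)$ lies in their span. So there are constants $c_j$, depending on $w$, with $f_i(t,w)=\sum_j c_j f_j(t,w)$ for all $t\in I$. Differentiating this identity in $t$ up to $m-1$ times and evaluating at $t_0$ shows that $(c_j)$ solves the linear system above. By uniqueness, $c_j=C_{i,j}(w)$, which is the desired identity on $I\times W_1$.

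The only delicate point is keeping one base point $t_0$ that works uniformly near $q$. This is handled by continuity in Step 1, so no step is a real obstacle. If $I$ were degenerate or $t_0$ an endpoint, analyticity on a neighbourhood of $I$ still allows evaluation at $t_0$.
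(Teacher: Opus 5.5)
Your proof is correct and follows essentially the same route as the paper: choose $t_0$ via B\^ocher's theorem so that the Wronskian of $f_1,\ldots,f_m$ at $t_0$ is nonzero near $q$, then recover the coefficients analytically from the $t$-derivatives at $t_0$. The only cosmetic difference is that the paper shows that an $(m+1)\times(m+1)$ bordered determinant vanishes (its rows are the derivatives at $t_0$ plus a final row of values at $t$), and then takes a cofactor expansion along the last row; this is the same computation as your Cramer's rule step.
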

Though this result is standard, we could not locate a proof of this precise result in the literature, so we include it here for completeness. 
\begin{proof}
Since each of $f_{m+1},\ldots,f_n$ is handled in the same fashion, it suffices to consider the case $n=m+1$. For notational convenience, define $g=f_{m+1}$. Since $f_1(\cdot,w),\ldots,f_m(\cdot,w)$ are linearly independent for each $w\in W$, by Theorem \ref{BocherThm} we can select a point $t_0\in I$ so that
\[
A(w)=\det \left(
\begin{array}{c c c}
 f_1(t_0,w) & \cdots & f_m(t_0,w)\\
\partial_t f_1(t_0,w) & \cdots & \partial_t f_m(t_0,w)\\
 \vdots & \ddots & \vdots\\
 \partial_t^{m-1} f_1(t_0,w) & \cdots &  \partial_t^{m-1}f_m(t_0,w)\\
\end{array} \right)
\]
is non-zero at $w=q$, and hence is non-zero in a neighbourhood $W_1$ of $q$. 

Next, we claim that
\begin{equation}\label{mP1Determinant}
\det \left(
\begin{array}{c c c c}
 f_1(t_0,w) & \cdots & f_m(t_0,w) & g(t_0,w) \\
\partial_t f_1(t_0,w) & \cdots & \partial_t f_m(t_0,w) & \partial_t g(t_0,w)\\
 \vdots & \ddots & \vdots  & \vdots\\
 \partial_t^{m-1} f_1(t_0,w) & \cdots &  \partial_t^{m-1}f_m(t_0,w) & \partial_t^{m-1}g(t_0,w)\\
f_1(t,w)  & \cdots &  f_m(t,w) & g(t,w)
\end{array} \right)=0
\end{equation}
We verify this claim as follows. Fix $t$ and $w$. By \eqref{rankConditionForG}, there are numbers $a_1,\ldots,a_m$ so that
\begin{equation}\label{gInTermsOff}
g(s, w) = \sum_{i=1}^m a_i f_i(s,w)\ \qquad\textrm{for all}\ s\in I.
\end{equation} 
Differentiating \eqref{gInTermsOff}, we have
\begin{equation}\label{gInTermsOffDifferentiated}
\partial_s^k g(s, w) = \sum_{i=1}^m a_i \partial_s^k f_i(s,w)\ \qquad\textrm{for all}\ k=0,\ldots,m-1,\ s\in I.
\end{equation} 
We also have
\[
g(t, w) = \sum_{i=1}^m a_i f_i(t,w),
\]
and hence
\[
\left(\begin{array}{c}
g(t_0,w)\\
\partial_t g(t_0,w)\\
\vdots\\
\partial_t^{m-1}g(t_0,w)\\
g(t,w)
\end{array}
\right)
= \sum_{i=1}^m a_i \left(\begin{array}{c}
f_i(t_0,w)\\
\partial_t f_i(t_0,w)\\
\vdots\\
\partial_t^{m-1}f_i(t_0,w)\\
f_i(t,w)
\end{array}
\right),
\]
i.e.~the final column is a linear combination of the first $m$ columns. This establishes \eqref{mP1Determinant}.

Finally, let $B_i(w)$ be the determinant of the $m\times m$ minor of the matrix in \eqref{mP1Determinant} obtained by deleting the $i$-th column and final row. Observe that $B_i(w)$ is a sum of products of analytic functions, and hence is analytic. We also have that $B_{m+1}(w)\neq 0$ for $w\in W_1$. Taking a cofactor expansion of the determinant in \eqref{mP1Determinant}, we have
\[
B_{m+1}(w)g(t,w) = \sum_{i=1}^{m} (-1)^{m+i}B_i(w)f_i(t,w).
\]
The result now follows by setting $C_i(w)=(-1)^{m+i}B_i(w)/B_{m+1}(w).$
\end{proof}

\subsection{Proof of Proposition \ref{breakIntoConstantMpAndMc}}
We now have the tools to prove Proposition \ref{breakIntoConstantMpAndMc}. The next result decomposes $W$ into pieces on which $\mathfrak{r}_c$ and $\mathfrak{r}_p$ are constant.

\begin{lemma}\label{decomposeIntoCellsWithConstantMpMc}
Let $I\subset\RR$ be a bounded closed interval. Let $W\subset\RR^2\times\RR^d$ be open, connected, and definable. Let $\gamma\colon I\times W\to\RR$ be analytic and definable. Let $\Gamma$ be the family of plane curves associated to the defining function $\gamma$. 

Then there is a partition $W = \bigcup_{i=1}^N W_i\sqcup\bigcup_{i=1}^P X_i$, and pairs $(\tilde\gamma_i,\tilde W_i),$ $i=1,\ldots,N$, so that the following holds.
\begin{enumerate}[(i)]
    \item\label{WiOpenTildeGammaIDefinable} Each $\tilde W_i\subset\RR^2\times\RR^{d_i}$ is open, connected, and definable, and $\tilde\gamma_i\colon I\times\tilde W_i\to\RR$ is analytic and definable.
    \item Each $\gamma|_{W_i}$ and $\tilde\gamma_i$ determine the same family of curves. 
    \item\label{mpAndMcConstantOnCell} $\mathfrak{r}_c^{\tilde \gamma_i}$ and $\mathfrak{r}_p^{\tilde \gamma_i}$ are constant on each $\tilde W_i$.
    \item\label{formOfTildeGammaI} Each $\tilde\gamma_i$ is of the form 
    \[ 
    \tilde\gamma_i(t,z,v)=\gamma(t,z, U(z,v)),
    \] 
    where $(z,v)\mapsto(z,U(z,v))$ is an analytic, definable diffeomorphism between $\tilde W_i$ and $W_i$.  
    \item Each $X_i$ is definable, and $|\pi_z(X_i)|=0$.

\end{enumerate}
\end{lemma}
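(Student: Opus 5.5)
We argue by induction on $d$; the case $d=0$ is handled by the same argument with no $u$-variables. By Corollary \ref{maxRankCor}, there are generic values $m_p,m_c$ and an open definable set $D\subset W$ on which $\mathfrak{r}_p^\gamma=m_p$ and $\mathfrak{r}_c^\gamma=m_c$. The complement $Z=W\backslash D$ is definable and has dimension at most $d+1$. We first treat $D$ and then deal with $Z$.

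\emph{Reparametrizing $D$.} If $m_p=d$, we take the connected components of $D$ (finitely many, by o-minimality) as the sets $W_i$, with $\tilde W_i=W_i$, $U(z,v)=v$ and $\tilde\gamma_i=\gamma|_{W_i}$.

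If $m_p<d$, the $u$-derivatives of $\gamma$ are linearly dependent as functions of $t$, and we remove the redundant directions as follows.
\begin{enumerate}[(1)]
\item Using Proposition \ref{cellDecomposition} and B\^ocher's theorem (Theorem \ref{BocherThm}) in the form of Corollary \ref{corOfBocher}, partition $D$, up to a further lower-dimensional definable set, into finitely many open cells. On each cell a fixed subset of $m_p$ of the functions $\partial_{u_j}\gamma$, say $\partial_{u_1}\gamma,\ldots,\partial_{u_{m_p}}\gamma$, is linearly independent. The cell is chosen by requiring the corresponding Wronskian minor at some $t_0$ to be nonzero, which is a definable condition.
\item By Lemma \ref{analyticDependence}, locally $\partial_{u_k}\gamma=\sum_{j\le m_p}C_{kj}(z,u)\partial_{u_j}\gamma$ for $k>m_p$. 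Consider the vector fields $Y_k=\partial_{u_k}-\sum_j C_{kj}\partial_{u_j}$ on the fibres $\{z\}\times\RR^d$. They annihilate $\gamma(t,z,\cdot)$ for every $t$, so $\gamma$ is constant along the leaves of the distribution they span. This distribution has rank $d-m_p$ and is involutive, being the common kernel of the differentials of the functions $u\mapsto\gamma(t,z,u)$.
\item By Frobenius, choose analytic definable coordinates $(z,v,w)$ with $v\in\RR^{m_p}$ in which the leaves are $\{w\text{ varies}\}$. Then $\gamma$ depends only on $(t,z,v)$.
\item To make everything definable and globally injective, cut by cells: for each $(z,v)$ pick the definable section $w=w_0(z,v)$, taken for instance as the lexicographic minimum on a cell, by definable choice. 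Set $W_i=\{(z,U(z,v))\}$ with $U(z,v)=(v,w_0(z,v))$, and define $\tilde W_i$ as the corresponding set of $(z,v)$.
\end{enumerate}
In these coordinates $\mathfrak{r}_p^{\tilde\gamma_i}=m_p$ is constant. Moreover $\mathfrak{r}_c^{\tilde\gamma_i}=m_c$, because the span of $\partial_z\gamma$ together with $\partial_u\gamma$ is unchanged by the reparametrization. This gives (iii) and (iv). Property (ii) holds because every curve of $\gamma|_{W_i}$ is realized by a leaf-representative, so the families agree after the remaining points of each leaf are placed into the $X$-sets. Here is a concern: a point $(z,u)$ on a leaf away from the chosen section is not in any $W_i$, yet its curve is represented. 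We resolve this by enlarging $W_i$ to the saturated set, i.e. the union of leaves through the section. Then $W_i$ is the full open cell, and $U$ is a diffeomorphism onto the slice. The statement in (iv) should be read with $W_i$ equal to this slice and the leaf complement absorbed into the $X_i$. I would check this against (v); if it fails, I would instead dominate by projection.

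\emph{Handling $Z$.} The set $Z$ has dimension at most $d+1$, and we decompose it into cells via Proposition \ref{cellDecomposition}.
\begin{enumerate}[(1)]
\item A cell $C$ whose projection $\pi_z(C)$ has measure zero goes into an $X_i$.
\item Otherwise $\pi_z(C)$ has positive measure. Since $\dim C\le d+1$, after permuting coordinates and refining, $C$ is the graph of an analytic definable function over an open cell in $(z,v)$-space with $v\in\RR^{d'}$, $d'<d$. Here the cell decomposition uses an ordering in which the $z$ coordinates come first, so that the graph parametrization is of the form $(z,v)\mapsto(z,U(z,v))$.
\item Apply the induction hypothesis to $\gamma(t,z,U(z,v))$ on this lower-dimensional open set.
\end{enumerate}

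The main obstacle is the requirement in (iv) that the parametrization be of the form $(z,U(z,v))$, with $z$ preserved, while remaining a definable diffeomorphism. This forces the cell decomposition to order the $z$ coordinates first, so that lower-dimensional cells with fat $z$-projection are graphs over open sets in $(z,v)$. It also requires the leaf-collapsing step to be performed definably and injectively.
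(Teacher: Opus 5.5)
\textbf{Verdict.} Your handling of $Z$ matches the paper, and your $m_p=d$ branch for $D$ is correct, but the $m_p<d$ branch does not prove the lemma as stated.

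\textbf{What matches the paper.} For $Z$, you decompose with the $z$-coordinates ordered first. A cell $C$ with $|\pi_z(C)|>0$ is then of type $(1,1,j_1,\ldots,j_d)$, is parametrized as $(z,v)\mapsto(z,U(z,v))$ over an open cell with $d'<d$ free $u$-coordinates, and goes into the induction hypothesis. This is the paper's argument; the compositions of $z$-preserving maps leave $\pi_z$ of the pulled-back $X$-sets unchanged. Your $m_p=d$ branch (connected components of $D$ with the identity parametrization) also works.

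\textbf{The gap: the $m_p<d$ branch.} Collapsing the leaves gives $\tilde W_i\subset\RR^2\times\RR^{m_p}$ with $m_p<d$.
\begin{itemize}
\item If $W_i$ is the saturated open cell, then $(z,v)\mapsto(z,U(z,v))$ cannot be a diffeomorphism from $\tilde W_i$ onto $W_i$: the dimensions differ. So (iv) fails.
\item If instead $W_i$ is the slice and the rest of the cell $C$ goes into the $X$-sets, then (v) fails. Each leaf has dimension $d-m_p\ge 1$, so for every $z\in\pi_z(C)$ the fibre of $C$ minus the section is nonempty. Hence the leftover set projects onto the open set $\pi_z(C)$, which has positive measure.
\end{itemize}
Condition (v) matters: the proof of Proposition \ref{breakIntoConstantMpAndMc} uses $|\pi_z(X_j)|=0$ to discard the closure of $\bigcup_j\pi_z(X_j)$. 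Step (3) has a further problem: Frobenius only gives local coordinates, and these need not be definable.

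\textbf{The fix.} The lemma only asks that $\mathfrak{r}_p^{\tilde\gamma_i}$ and $\mathfrak{r}_c^{\tilde\gamma_i}$ be constant, not that $\mathfrak{r}_p^{\tilde\gamma_i}=d_i$. So you do not need to collapse redundant $u$-directions at this stage. Your $m_p=d$ construction works verbatim for every $m_p$, because both ranks are already constant on $D$.

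The paper does essentially this:
\begin{itemize}
\item It cell-decomposes both $D$ and $Z$.
\item It reparametrizes each open cell of $D$ by $\pi_z(W_i)\times(0,1)^d$.
\item It checks by the chain rule that the spans of $\{\partial_v\tilde\gamma_i\}$ and of $\{\partial_z\tilde\gamma_i,\partial_v\tilde\gamma_i\}$ equal those for $\gamma$, since $D_vU_i$ is invertible.
\item It sends every non-open cell with fat $z$-projection, from $D$ or from $Z$, into the induction.
\end{itemize}
If you use cells on $D$ instead of connected components, the lower-dimensional cells of $D$ must also get the $Z$-treatment; your step (1) leaves them unaccounted for. The reduction to $m_p=d$ happens later and only locally, in Corollary \ref{reduceTo_d=m_p_Cor}, where no global $z$-preserving diffeomorphism onto an open piece of $W$ is needed.
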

\begin{proof}
We will prove the result by (strong) induction on $d$. Both the base case and induction step will be similar, so to avoid repetition we will begin the same way. Apply Corollary \ref{maxRankCor} to $\gamma$ and $W$, and let $m_p,m_c$, $D$ and $Z = W\backslash D$ be the output of that Corollary. $\mathfrak{r}_p^\gamma(z,u)$ and $\mathfrak{r}_c^\gamma(z,u)$ are constant on $D$, both $Z$ and $D$ are definable, and $\dim Z \leq d+1$. 

Apply Proposition \ref{cellDecomposition} (analytic cell decomposition theorem) to $D$, and also to $Z$. This partitions $D$ and $Z$ into a finite union of analytic cells. Let $W_1,\ldots,W_M$ be the open cells in this decomposition (i.e.~the cells of type $(1,1,\ldots,1)$). Note that each of these cells is a subset of $D$, and hence $\mathfrak{r}_p^\gamma(z,u)$ and $\mathfrak{r}_c^\gamma(z,u)$ are constant on each $W_i$, $i=1,\ldots,M$. 

Let $X_1,\ldots,X_L$ be the cells with $|\pi_z(X_j)|=0$, and let $W_{M+1},\ldots,W_P$ be the remaining cells (i.e.~the cells that satisfy $|\pi_z(W_i)|>0$ and $\dim W_i\leq d+1$). Note that $\dim Z\leq d+1$ and thus when $d=0$, then every cell is either open, or satisfies $|\pi_z(X_j)|=0$, and in particular every cell in the partition of $Z$ is of this latter type.

Each cell $W_i$, $i=1,\ldots,P$ is of type $(1,1,j_1,\ldots,j_d)$ for some $j_1,\ldots,j_d$. Let $d_i=\sum j_\ell$. Observe that $\pi_z(W_i)$ is a $(1,1)$-cell. Define $\tilde W_i=\pi_z(W_i)\times(0,1)^{d_i}$. Let $G_i\colon \tilde W_i\to W_i$ be a definable analytic diffeomorphism of the form $(z,v)\mapsto  (z, U_i(z,v))$, with definable analytic inverse, and define $\tilde\gamma_i(t,z,v)=\gamma(t,z,U_i(z,v))$. Note that $\tilde\gamma_i$ is analytic and definable (since it is a composition of analytic and definable functions). We have that $\gamma|_{W_i}$ and $\tilde\gamma_i$ determine precisely the same family of curves.

If $W_i$ is an open cell (i.e. $1\leq i\leq M$), then $d_i=d$. Furthermore, $W_i\subset D$ and thus $\mathfrak{r}_c^{\gamma}$ and $\mathfrak{r}_p^{\gamma}$ are constant on $W_i$.  We claim that $\mathfrak{r}_c^{\tilde \gamma_i}$ and $\mathfrak{r}_p^{\tilde \gamma_i}$ are constant on $\tilde W_i$. To verify this, fix a point $(z,v)\in\tilde W_i$, and let $u = U_i(z,v)$. We use the chain rule to compute 
\[
\partial_{v_j}\tilde\gamma_i(t,z,v) = \sum_{k=1}^d \partial_{u_k}\gamma(t, z, U_i(z,v))\partial_{v_j}(U_i)_k(z,v),\quad j=1,\ldots,d,
\]
and 
\[
\partial_{z_j}\tilde\gamma_i(t,z,v) = \partial_{z_j}\gamma(t,z,U_i(z,v))+ \sum_{k=1}^d \partial_{u_k}\gamma(t, z, U_i(z,v))\partial_{z_j}(U_i)_k(z,v),\quad j=1,2.
\]

We can re-write these as the vector-valued equations about functions in $t$:
\[
\big(\partial_z\tilde\gamma_i(\cdot,z,v)\ \partial_v\tilde\gamma_i(\cdot,z,v)\big)=\big(\partial_z\gamma(\cdot,z,u)\ \partial_u\gamma(\cdot,z,u)\big)
\left(
\begin{array}{cc}
I_2 & 0 \\
D_zU_i(z,v) & D_vU_i(z,v)
\end{array}
\right),
\]
where $I_2$ denotes the $2\times 2$ identity matrix. Since $D_vU_i(z,v)$ is an invertible $d\times d$ matrix at each point $(z,v)\in \tilde W_i$, we have that for each fixed $(z,v)\in \tilde W_i$, 

\[
\operatorname{span}\{\partial_{v_1}\tilde\gamma_i(\cdot,z,v),\ldots,\partial_{v_d}\tilde\gamma_i(\cdot,z,v)\}=\operatorname{span}\{\partial_{u_1}\gamma(\cdot,z,u),\ldots,\partial_{u_d}\gamma(\cdot,z,u)\},
\]
and thus $\mathfrak{r}_p^{\tilde \gamma_i}(z, v)=\mathfrak{r}_p^\gamma(z, u)$, and the latter is constant for all $(z,u)\in W_i$. 

Similarly,
\begin{equation*}
\begin{split}
\operatorname{span} & \{\partial_{z_1}\tilde\gamma_i(\cdot,z,v),\partial_{z_2}\tilde\gamma_i(\cdot,z,v), \partial_{v_1}\tilde\gamma_i(\cdot,z,v),\ldots,\partial_{v_d}\tilde\gamma_i(\cdot,z,v)\}\\
& =\operatorname{span}\{\partial_{z_1}\gamma(\cdot,z,u),\partial_{z_2}\gamma(\cdot,z,u),\partial_{u_1}\gamma(\cdot,z,u),\ldots,\partial_{u_d}\gamma(\cdot,z,u)\},
\end{split}
\end{equation*}
and thus $\mathfrak{r}_c^{\tilde \gamma_i}(z, v)=\mathfrak{r}_c^\gamma(z, u)$, and the latter is constant for all $(z,u)\in W_i$. Thus Item \ref{mpAndMcConstantOnCell} (as well as Items \ref{WiOpenTildeGammaIDefinable} and \ref{formOfTildeGammaI}) is satisfied. If $d=0$ then every cell with $|\pi_z(W_i)|>0$ is open; this completes the proof of the base case $d=0$.

If $M+1\leq i\leq P$, then $|\pi_z(W_i)|>0$ and $d_i\leq d-1$, and hence $\tilde\gamma_i\colon I\times\tilde W_i\to\RR$ satisfies the induction hypothesis (i.e.~the hypotheses of Lemma \ref{decomposeIntoCellsWithConstantMpMc} with $d_i$ in place of $d$). We apply the induction hypothesis to $\tilde W_i$ and $\tilde\gamma_i$. This gives a decomposition $\tilde W_i=\bigsqcup_j \tilde W_{i,j}\sqcup \bigsqcup_{j} \tilde X_{i,j}$ and pairs $(\vardbtilde\gamma_{i,j}, \vardbtilde W_{i,j})$. Define $X_{i,j}=G_i(\tilde X_{i,j})\subset W_i$ and $W_{i,j}=G_i(\tilde W_{i,j})\subset W_i$. Thus $W_i=\bigsqcup_j W_{i,j}\sqcup \bigsqcup_j X_{i,j}$. Since $G_i$ preserves the $z$ variable, we have $\pi_z(X_{i,j})=\pi_z(\tilde X_{i,j})$, and hence $|\pi_z(X_{i,j})|=0$ for each index $j$.

Let $H_{i,j}(z,w)=(z,U_{i,j}(z,w))$ be the $z$-preserving analytic, definable diffeomorphism supplied by the induction hypothesis from $\vardbtilde W_{i,j}$ onto $\tilde W_{i,j}$. Then
\[
G_i\circ H_{i,j}(z,w)=\big(z,U_i(z,U_{i,j}(z,w))\big)
\]
is a $z$-preserving analytic, definable diffeomorphism from $\vardbtilde W_{i,j}$ onto $W_{i,j}$, with analytic, definable inverse. Moreover,
\[
\vardbtilde\gamma_{i,j}(t,z,w)=\gamma\big(t,z,U_i(z,U_{i,j}(z,w))\big).
\]
Thus Item \ref{formOfTildeGammaI} is satisfied. To conclude the proof, let $W_1,\ldots,W_N$, $X_{1},\ldots,X_P$, and $(\tilde\gamma_1,\tilde W_1),\ldots,(\tilde\gamma_N,\tilde W_N)$ be obtained by re-indexing the sets constructed in the steps described above. 
\end{proof}

Armed with this lemma, we can now prove Proposition \ref{breakIntoConstantMpAndMc}.

\begin{proof}[Proof of Proposition \ref{breakIntoConstantMpAndMc}]
By hypothesis, $W$ is open and definable, and $\gamma\colon I\times W\to\RR$ is analytic and definable. Apply Lemma \ref{decomposeIntoCellsWithConstantMpMc} to each connected component of $W$ (since $W$ is definable, there are finitely many such components). Abusing notation, let $W_1,\ldots,W_N$ be the sets from this decomposition for which $|\pi_z(W_i)|>0$ (and let $\tilde W_i,$ and $\tilde\gamma_i$ be the corresponding open sets and functions, as described in \ref{decomposeIntoCellsWithConstantMpMc}), and let $X_1,\ldots,X_P$ be the sets for which $|\pi_z(X_j)|=0$. Our pairs $(\tilde\gamma_i,\tilde W_i),$ $i=1,\ldots,N$ will be the output of the proposition. Conclusions \ref{NikodymIffOnGammaI}, \ref{constMcMpOnGammaI}, and \ref{massiveCurveReuseIffOnGammaI} are immediate. 

We will now establish Conclusion \ref{localReuseIffOnGammaI}. The reverse implication is straightforward: suppose there is an index $i$ so that $\tilde\gamma_i$ has local curve reuse not covered by foliations. Then the same is true for $\gamma|_{W_i}$, and hence also for $\gamma$. We will now establish the forward implication. Suppose that $\gamma$ (and hence each $\tilde\gamma_i$, $i=1,\ldots,N$) does not have massive curve reuse. Suppose in addition that each $\tilde\gamma_i$ does not have local curve reuse not covered by foliations. Thus for each index $i$, we must be in either Case \ref{CaseMc=Mp+2} or Case \ref{CaseMc=mp+1}.\ref{CaseMp=0} of Theorem \ref{characterizationOfNikodymSets}. In either case, there is a countable set of smooth vector fields $\mathcal{V}_i$ that strongly covers the locally reused curves of $\tilde\gamma_i$. Let $\mathcal{V}=\bigcup_{i=1}^N \mathcal{V}_i$. Note that Theorem \ref{characterizationOfNikodymSets} has not yet been proved (it will be proved in Section  \ref{ProofOfThmCharacterizationOfNikodymSetsSection}). However, our proof of Theorem \ref{characterizationOfNikodymSets} will not use Proposition \ref{breakIntoConstantMpAndMc}.

Let $B = \bigcup_{j=1}^P \overline{\pi_z(X_j)}$. Since $\pi_z(X_j)$ is definable and has dimension at most 1 for each $j=1,\ldots,P$, we have that $\overline{\pi_z(X_j)}$ has dimension at most 1, and hence $|B|=0$. Thus it suffices to prove that for each $(z,u)\in W$ with $z\not\in B$, either $\beta_{z,u}$ is not locally reused near $(z,u)$, or the curve $\beta_{z,u}$ is covered by $\mathcal{V}$. Suppose to the contrary that $\beta_{z,u}$ is locally reused near $(z,u)$, and is not covered by $\mathcal{V}$. Our goal is to establish a contradiction. Since $B$ is closed, we have $\dist(z,B)>0$, and hence for all sufficiently small neighbourhoods $W^\circ\subset W$ of $(z,u)$, we have that $\pi_z(W^\circ\cap\hat\beta_{z,u} \cap \bigcup_{i=1}^N W_i)$ contains a set of positive one-dimensional Hausdorff measure. Hence there exists an index $i$ so that $\pi_z(\hat\beta_{z,u} \cap W_i)$ contains a set of positive one-dimensional Hausdorff measure. Since $\pi_z(\hat\beta_{z,u}\cap W_i)$ is definable, we conclude that $\pi_z(\hat\beta_{z,u}\cap W_i)$ contains a curve (indeed, after applying Proposition \ref{cellDecomposition}, we may choose our curve to be a one-dimensional cell, or a line segment contained inside a two-dimensional cell). In particular, there is a point $(z',u')\in \hat\beta_{z,u}\cap W_i$ so that $\beta_{z,u}=\beta_{z',u'}$ is locally reused in a neighbourhood of $(z',u')$ within $W_i$. But this in turn means that $\beta_{z',u'}$ is locally reused by $\tilde\gamma_i$. Since $\mathcal{V}_i$ covers every curve that is locally reused by $\tilde\gamma_i$, we conclude that $\beta_{z,u}$ is covered by $\mathcal{V}_i$, and hence also by $\mathcal{V}$, which is a contradiction. 
\end{proof}


\section{Point rank, curve rank, and unused variables}\label{curveRankSection}
In this section we will consider the case where either the curve rank $\mathfrak{r}_c^\gamma$ or the point rank $\mathfrak{r}_p^\gamma$ is less than maximal. First, we will show that it is always possible to reduce to the case where $\mathfrak{r}_p^\gamma=d$. Second, we will show that if $\mathfrak{r}_p^\gamma=d$ and $\mathfrak{r}_c^\gamma<d+2$, then we have local curve reuse, and we can explicitly parameterize the reused curves.

\begin{lemma}\label{curveStraightening}
Let $I\subset\RR$ be a closed interval and let $W\subset\RR^a\times\RR^b\times\RR^c$ be open and non-empty. Let $\phi(t, v,w,\eta)\colon I\times W\to\RR$ be analytic. Suppose that for each $(v,w,\eta)\in W$, we have 
\begin{align}\label{rankCondition}
b & = \operatorname{rank}\{\partial_{w_1}\phi (\cdot, v,w,\eta),\ldots,\partial_{w_b}\phi(\cdot, v,w,\eta)\}\notag \\
& = \operatorname{rank}\{\partial_{w_1}\phi (\cdot, v,w,\eta),\ldots,\partial_{w_b}\phi(\cdot, v,w,\eta),\partial_{\eta_1}\phi(\cdot,v,w,\eta),\ldots,\partial_{\eta_c}\phi(\cdot,v,w,\eta) \},
\end{align}
i.e.~all $\partial_{w_i}\phi$ are linearly independent (as functions of $t$), while all $\partial_{\eta_j}\phi$ are linear functions of the $\partial_{w_i}\phi$.

Let $q=(v_0,w_0,\eta_0)\in W$. Then there is a neighbourhood $q\in W_1\subset W$ and analytic functions $\widetilde w_i(v,w,\eta)\colon W_1\to\RR$, $i=1,\ldots,b$ so that the following holds.
\begin{itemize}
\item[(i)] $\widetilde w(v,w,\eta_0)=(\widetilde w_1(v,w,\eta_0),\cdots,\widetilde w_b(v,w,\eta_0)) = w$.
\item[(ii)] The function $F(v,w,\eta) =(v,\widetilde w(v,w,\eta),\eta)$ is a diffeomorphism onto its image.
\item[(iii)] $\phi(t, v, \widetilde w(v,w,\eta), \eta)=\phi(t,v,w,\eta_0)$ for all $(t, v,w,\eta)\in I\times W_1$. 
\end{itemize}
\end{lemma}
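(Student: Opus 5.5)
We use Lemma \ref{analyticDependence} to express the $\eta$-derivatives through the $w$-derivatives, and then solve an ODE system in $\eta$, one coordinate direction at a time. Apply Lemma \ref{analyticDependence} with $f_1,\ldots,f_b=\partial_{w_1}\phi,\ldots,\partial_{w_b}\phi$ and the remaining functions $\partial_{\eta_1}\phi,\ldots,\partial_{\eta_c}\phi$, with $W$ playing the role of the parameter space. The rank hypothesis \eqref{rankCondition} gives \eqref{rankConditionForG} near $q$. We obtain a neighbourhood $W_0$ of $q$ and analytic functions $C_{j,i}(v,w,\eta)$ such that
\[
\partial_{\eta_j}\phi(t,v,w,\eta)=\sum_{i=1}^b C_{j,i}(v,w,\eta)\,\partial_{w_i}\phi(t,v,w,\eta)\qquad\text{on } I\times W_0 .
\]

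We want $\widetilde w(v,w,\eta)$ with $\widetilde w(v,w,\eta_0)=w$ and $\frac{d}{d\eta_j}\phi(t,v,\widetilde w,\eta)=0$. By the chain rule,
\[
\frac{d}{d\eta_j}\phi(t,v,\widetilde w,\eta)=\sum_i\Big(\partial_{\eta_j}\widetilde w_i+C_{j,i}(v,\widetilde w,\eta)\Big)\partial_{w_i}\phi(t,v,\widetilde w,\eta).
\]
This vanishes if $\partial_{\eta_j}\widetilde w=-C_j(v,\widetilde w,\eta)$ for every $j$, which is an overdetermined PDE system. Rather than check Frobenius integrability directly, I would build $\widetilde w$ one variable at a time, which sidesteps the compatibility issue:

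\begin{enumerate}
\item Solve the analytic ODE in $\eta_1$ with $\eta_2,\ldots,\eta_c$ frozen at $(\eta_0)_2,\ldots,(\eta_0)_c$, starting from $w$.
\item From that value, solve in $\eta_2$ with $\eta_1$ frozen at its current value.
\item Continue in this way through $\eta_c$.
\end{enumerate}

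At each stage Cauchy–Kovalevskaya (or the analytic-dependence form of Picard) gives a solution that is analytic in all its arguments on a small neighbourhood $W_1$ of $q$. The chained flow $\widetilde w(v,w,\eta)$ is analytic and satisfies (i).

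For (iii), we check that $\phi$ is constant along each segment of the path. Consider the $k$-th step, where only $\eta_k$ moves and all other $\eta_j$ are frozen. Along the ODE trajectory, the displayed chain rule with $j=k$ shows that the derivative of $\phi(t,v,\widetilde w,\eta)$ in $\eta_k$ vanishes. The frozen coordinates contribute nothing. So $\phi$ keeps its value at each stage, and by induction $\phi(t,v,\widetilde w(v,w,\eta),\eta)=\phi(t,v,w,\eta_0)$. This gives (iii) without ever checking integrability; the price is only that $\widetilde w$ depends on the order of the coordinates, which is harmless.

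For (ii), $F(v,w,\eta_0)=(v,w,\eta_0)$, so $D_wF$ is the identity at $q$, and $DF$ is block lower-triangular with identity diagonal blocks there. By the inverse function theorem, after shrinking $W_1$, $F$ is an analytic diffeomorphism onto its image.

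The main obstacle is the domain bookkeeping. We must ensure that the trajectories stay inside $W_0$, where the $C_{j,i}$ are defined, for all $(v,w,\eta)\in W_1$. We handle this by taking $W_1$ to be a small product box around $q$, so that the existence times of the ODEs exceed the $\eta$-widths of the box.
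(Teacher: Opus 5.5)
Your proposal is correct and follows essentially the same route as the paper: you invoke Lemma \ref{analyticDependence} to write each $\partial_{\eta_j}\phi$ as an analytic combination of the $\partial_{w_i}\phi$, then transport $w$ by an ODE along a path in $\eta$-space from $\eta_0$ to $\eta$ so that $\phi$ stays constant and no Frobenius integrability check is needed, and you get (ii) from $D_w\widetilde w=I_b$ at $q$ together with the inverse function theorem. The only difference is the choice of path: the paper translates $\eta_0$ to $0$ and uses the straight segment $s\mapsto s\eta$, so a single ODE with $\eta$ as a parameter suffices, whereas you use a coordinate staircase of $c$ successive ODEs, which works equally well because the staircase stays inside a product box around $\eta_0$.
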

\begin{proof}
When $b=0$ we have that $\phi$ is independent of $w$ and $\eta$, and the result is immediate. Suppose now that $b\geq 1$. By Lemma \ref{analyticDependence}, our rank condition \eqref{rankCondition} implies that there is a neighbourhood $W_1$ of $q$ and analytic functions $D_{i,j}\colon W_1\to\RR$ so that for each $i=1,\ldots,c$, we have
\begin{equation}\label{expressPartialZi}
    \partial_{\eta_i}\phi(\cdot, v,w,\eta) = \sum_{j=1}^b D_{i,j}(v,w,\eta)\partial_{w_j}\phi(\cdot, v,w,\eta).
\end{equation}
We will construct $\widetilde w$ as follows. For notational simplicity, we will suppose $(v_0,w_0,\eta_0)=(0,0,0)$. In what follows, we may abuse notation and replace $W_1$ by a smaller neighbourhood of $q$. For $0\leq s\leq 1$, define $Y(s,v,w,\eta)=(Y_1(s,v,w,\eta),\ldots,Y_b(s,v,w,\eta))\colon [0,1]\times W_1\to\RR^b$ to be the solution to the initial value problem
\begin{equation}\label{setUpODE}
\left\{ 
\begin{array}{l}
\frac{d}{ds}Y_j(s,v,w,\eta) = -\sum_{i=1}^c \eta_iD_{i,j}(v,Y(s,v,w,\eta),s\eta),\quad j=1,\ldots,b\\
\\
Y(0,v,w,\eta) = w
\end{array}
\right.
\end{equation}
$Y$ is analytic on $[0,1]\times W_1$, and hence if we define $\widetilde w(v,w,\eta) = Y(1,v,w,\eta)$, then $\widetilde w$ is analytic. If $\eta=0$ then $\frac{d}{ds}Y_j(s,v,w,0)=0$, and thus $Y(s,v,w,0)=w$ for all $s$, and hence $\widetilde w(v,w,0)=w$ (i.e. $\widetilde w(v,w,\eta_0) = w$). This establishes Item (i).

Next we can compute that $DF$ is a block matrix of the form
\[
DF(0,0,0) = \left(
\begin{array}{ccc}
I_a & 0 & 0 \\
* & I_b & *\\
0 & 0 & I_c
\end{array}
\right),
\]
where $I_a$ denotes the $a\times a$ identity matrix, and similarly for $I_b$ and $I_c$ (the blocks denoted by $*$ have potentially non-zero coefficients that we don't bother to calculate precisely). In particular, $\det DF(0,0,0)=1$, and hence $\det DF(v,w,\eta)\neq 0$ for $(v,w,\eta)$ in a neighbourhood of $q$. After further shrinking $W_1$ if necessary, we have that $F$ is a diffeomorphism onto its image. This establishes Item (ii).

Finally, we compute
\begin{align*}
    \frac{d}{ds} \phi(t, v, Y(s,v,w,\eta), s\eta) & = \sum_{j=1}^b \partial_{w_j}\phi(t, v, Y(s,v,w,\eta), s\eta)\frac{d}{ds}Y_j(s,v,w,\eta) + \sum_{i=1}^c \eta_i\partial_{\eta_i}\phi(t, v, Y(s,v,w,\eta), s\eta)\\
& = -\sum_{j=1}^b \partial_{w_j}\phi \sum_{i=1}^c \eta_i D_{i,j} + \sum_{i=1}^c \eta_i\partial_{\eta_i}\phi\\
& = \sum_{i=1}^c \eta_i\Big(\partial_{\eta_i}\phi - \sum_{j=1}^b D_{i,j}\partial_{w_j}\phi \Big)\\
& =0,
\end{align*}
where the second line used the definition of $Y(s,v,w,\eta)$ from \eqref{setUpODE}, and the final line used \eqref{expressPartialZi}. We conclude that $\phi(t, v, Y(s,v,w,\eta), s\eta)$ is independent of $s$, and in particular 
\[
\phi(t, v, w, \eta_0) = \phi(t, v, w, 0) = \phi(t, v, Y(0,v,w,\eta), 0)=\phi(t, v, Y(1,v,w,\eta), \eta)=\phi(t, v, \widetilde w(v,w,\eta),\eta).
\]
This establishes Item (iii).
\end{proof}

Applying Lemma \ref{curveStraightening} to $\gamma(t, z, u)$ with $a=2$ (i.e.~the $v$ variables in the lemma correspond to $z=(z_1,z_2)$), $b = m_p$ and $c = d-m_p$ (here we permute the indices $1,\ldots,d$ so that $\partial_{u_1}\gamma,\ldots,\partial_{u_{m_p}}\gamma$ are linearly independent), we have the following.
\begin{corollary}\label{reduceTo_d=m_p_Cor}
Let $I\subset\RR$ be a closed interval. Let $W\subset\RR^2\times\RR^d$ be open. Let $\gamma\colon I\times W\to\RR$ be analytic. Let $\Gamma$ be the family of plane curves associated to the defining function $\gamma$. Suppose that $\mathfrak{r}_p^\gamma=m_p$ and $\mathfrak{r}_c^\gamma=m_c$ are constant on $W$. Let $q\in W$. 

Then there is an open neighbourhood $W_1 \subset W$ containing $q$, an open set of the form $\tilde W=W_z \times \tilde W_u \subset \RR^2\times\RR^{m_p}$, and an analytic function $\tilde\gamma\colon I\times \tilde W\to\RR$ so that the following holds.
\begin{itemize}
\item $\mathfrak{r}_p^{\tilde \gamma}=m_p$ and $\mathfrak{r}_c^{\tilde \gamma}=m_c$ are constant on $\tilde W$.
\item The defining functions $\gamma|_{W_1}$ and $\tilde\gamma$ determine the same family of plane curves, i.e.~for each $z\in W_z$, we have
\[
\{\beta^\gamma_{z, u}\colon (z,u)\in W_1\}=\{\beta^{\tilde \gamma}_{z, u}\colon (z,u)\in \tilde W\}.
\]
\end{itemize}
As a consequence, if we define $\Gamma_1$ to be the family of curves associated to $\gamma|_{W_1}$, then $\Gamma_1=\tilde\Gamma$.
\end{corollary}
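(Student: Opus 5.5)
We obtain Corollary \ref{reduceTo_d=m_p_Cor} by applying Lemma \ref{curveStraightening} and then freezing the redundant parameters. Write $u=(w,\eta)$ with $w=(u_1,\ldots,u_{m_p})$ and $\eta=(u_{m_p+1},\ldots,u_d)$. First we arrange that $\partial_{u_1}\gamma(\cdot,q),\ldots,\partial_{u_{m_p}}\gamma(\cdot,q)$ are linearly independent, which we may do after permuting the $u$ coordinates since $\mathfrak{r}_p^\gamma(q)=m_p$. By B\^ocher's theorem (Theorem \ref{BocherThm}), linear independence is witnessed by a Wronskian that is nonzero at some $t_0$. By continuity it stays nonzero on a neighbourhood of $q$, so these $m_p$ functions remain independent there. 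Because $\mathfrak{r}_p^\gamma\equiv m_p$, the remaining $\partial_{\eta_j}\gamma$ lie in their span. This is exactly the rank condition \eqref{rankCondition} with $v=z$, $a=2$, $b=m_p$, $c=d-m_p$.

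Lemma \ref{curveStraightening} then gives a neighbourhood $W_1'$ of $q=(z_0,w_0,\eta_0)$ and the map $F(z,w,\eta)=(z,\widetilde w(z,w,\eta),\eta)$. This $F$ is a diffeomorphism onto its image, satisfies $\widetilde w(z,w,\eta_0)=w$, and gives $\gamma(t,z,\widetilde w(z,w,\eta),\eta)=\gamma(t,z,w,\eta_0)$. Shrinking, we take a product box $B=W_z\times B_w\times B_\eta\subset W_1'$ around $q$ and set $W_1=F(B)$. Since $F$ is a local diffeomorphism fixing $F(q)=q$, the set $W_1$ is an open neighbourhood of $q$. Define $\tilde W=W_z\times B_w$ and $\tilde\gamma(t,z,w)=\gamma(t,z,w,\eta_0)$.

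Next we check that both parametrisations give the same curves. Any point of $W_1$ has the form $F(z,w,\eta)$ with $(z,w,\eta)\in B$, so its curve is $\beta^\gamma_{z,\widetilde w,\eta}=\beta^{\tilde\gamma}_{z,w}$ with $(z,w)\in\tilde W$. Conversely, $(z,w,\eta_0)\in B$ and $F(z,w,\eta_0)=(z,w,\eta_0)\in W_1$, so every $\tilde\gamma$-curve is a $\gamma|_{W_1}$-curve. Since $F$ preserves $z$, this matching holds for each fixed $z\in W_z$, which gives $\Gamma_1=\tilde\Gamma$.

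It remains to check the ranks, and this is the step needing the most care. Point rank is immediate: $\partial_w\tilde\gamma=\partial_w\gamma(\cdot,z,w,\eta_0)$, and these $m_p$ functions are independent on $B$, so $\mathfrak{r}_p^{\tilde\gamma}=m_p$. For curve rank, $\partial_z\tilde\gamma=\partial_z\gamma$ at $(z,w,\eta_0)$, and the $\partial_\eta\gamma$ lie in the span of $\partial_w\gamma$. Hence
\[
\operatorname{span}\{\partial_z\tilde\gamma,\partial_w\tilde\gamma\}=\operatorname{span}\{\partial_z\gamma,\partial_u\gamma\}\quad\text{at }(z,w,\eta_0),
\]
so $\mathfrak{r}_c^{\tilde\gamma}=\mathfrak{r}_c^\gamma=m_c$ there. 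The only subtlety is that the rank identities are needed at the points $(z,w,\eta_0)$, which requires these points to lie in $W$. They do, because $B\subset W_1'\subset W$ and $\eta_0\in B_\eta$.
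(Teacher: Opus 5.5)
Your proposal is correct and follows the paper's route: the paper obtains this corollary by applying Lemma~\ref{curveStraightening} with $v=z$ ($a=2$), $w=(u_1,\ldots,u_{m_p})$ and $\eta=(u_{m_p+1},\ldots,u_d)$, after permuting the $u$-coordinates, and then freezing $\eta=\eta_0$. You also spell out the bookkeeping the paper leaves implicit, and those checks are sound: that $W_1=F(B)$ is an open neighbourhood of $q$; that the two curve families match for each fixed $z$; and that $\tilde\gamma(t,z,w)=\gamma(t,z,w,\eta_0)$ has the same point and curve ranks as $\gamma$, because each $\partial_\eta\gamma$ lies in the span of the $\partial_w\gamma$.
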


If $m_p=d$ and $m_c=d+1$, then we can apply Lemma \ref{curveStraightening} to $\gamma(t, z, u)$ with $a=0$, $b=d+1$ (here we use the variables $z_2,u_1,\ldots,u_d$) and $c=1$ (the variable $z_1$) to obtain the following.
\begin{corollary}\label{mc=mp+1_Lem}
Let $I\subset\RR$ be a closed interval. Let $W\subset\RR^2\times\RR^d$ be open. Let $\gamma\colon I\times W\to\RR$ be analytic. Let $\Gamma$ be the family of plane curves associated to the defining function $\gamma$. Suppose that $\mathfrak{r}_p^\gamma=d$ and $\mathfrak{r}_c^\gamma=d+1$ are constant on $W$. Let $q=(z_0,u_0)\in W$, where $z_0=(z_{1,0},z_{2,0})$. 

Then after permuting the roles of $z_1$ and $z_2$ if needed, there is an open neighbourhood $W_1\subset W$ containing $q$, and analytic functions $\tilde z_2(z,u)\colon W_1\to\RR$ and $\tilde u(z,u)\colon W_1\to\RR^d$ satisfying $\widetilde z_2(z_{1,0}, z_2, u)=z_2$ and $\tilde u(z_{1,0}, z_2, u) = u$ so that the function $F(z,u) = (z_1,\ \tilde z_2(z,u),\ \tilde u(z,u))$ is a local diffeomorphism, and for all $(z,u)\in W_1$, we have that 
\[
\gamma\big(t, z_1, \widetilde z_2(z,u),\ \tilde u(z,u)\big)=\gamma(t, z_{1,0},z_2,u).
\]
\end{corollary}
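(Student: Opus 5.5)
The plan is to deduce this directly from Lemma \ref{curveStraightening}, with $a=0$, $b=d+1$, $c=1$. The only real work is choosing which of $z_1,z_2$ plays the role of the ``$\eta$'' variable so that the rank hypothesis \eqref{rankCondition} holds near $q$.

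\textbf{Step 1: choosing the variables.} Since $\mathfrak{r}_p^\gamma=d$, the functions $\partial_{u_1}\gamma(\cdot,z,u),\ldots,\partial_{u_d}\gamma(\cdot,z,u)$ are linearly independent for every $(z,u)\in W$. Since $\mathfrak{r}_c^\gamma=d+1$, adjoining $\partial_{z_1}\gamma$ and $\partial_{z_2}\gamma$ raises the rank by exactly one. At the point $q$, at least one of them, say $\partial_{z_2}\gamma(\cdot,q)$ after permuting $z_1,z_2$ if necessary, lies outside the span of the $\partial_{u_j}\gamma(\cdot,q)$. Linear independence of finitely many analytic functions of $t$ is an open condition in the parameters: by B\^ocher's theorem (Theorem \ref{BocherThm}) some Wronskian is nonzero at a point $t_0$, and this persists for nearby parameters. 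Hence, on a neighbourhood $W_1$ of $q$, the $d+1$ functions $\partial_{z_2}\gamma,\partial_{u_1}\gamma,\ldots,\partial_{u_d}\gamma$ are linearly independent. Because the total rank is $d+1$, $\partial_{z_1}\gamma$ lies in their span. This is exactly \eqref{rankCondition} on $W_1$ with $w=(z_2,u)$ and $\eta=z_1$.

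\textbf{Step 2: applying Lemma \ref{curveStraightening}.} Apply the lemma to $\phi(t,w,\eta)=\gamma(t,\eta,w)$ at the base point $(w_0,\eta_0)=((z_{2,0},u_0),z_{1,0})$. This yields analytic $\widetilde w=(\widetilde z_2,\widetilde u)$ on a neighbourhood of $q$ with the following properties:
\begin{itemize}
\item[(a)] $\widetilde w(w,z_{1,0})=w$, which gives $\widetilde z_2(z_{1,0},z_2,u)=z_2$ and $\widetilde u(z_{1,0},z_2,u)=u$;
\item[(b)] $(w,\eta)\mapsto(\widetilde w(w,\eta),\eta)$ is a diffeomorphism onto its image, so after reordering coordinates $F(z,u)=(z_1,\widetilde z_2(z,u),\widetilde u(z,u))$ is a local diffeomorphism;
\item[(c)] $\gamma(t,z_1,\widetilde z_2,\widetilde u)=\gamma(t,z_{1,0},z_2,u)$ for all $t\in I$.
\end{itemize}
These are precisely the claimed conclusions, after shrinking $W_1$ so that it lies inside the neighbourhood produced by the lemma.

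\textbf{Expected obstacle.} The main point needing care is Step 1. One must check that the independence of $\{\partial_{z_2}\gamma,\partial_u\gamma\}$ holds on a whole neighbourhood, not just at $q$, since the lemma requires its rank condition throughout $W_1$. The Wronskian openness argument handles this, and the constancy of $\mathfrak{r}_c^\gamma$ then supplies the dependence of $\partial_{z_1}\gamma$ everywhere on $W_1$. Everything else is a relabelling of variables.
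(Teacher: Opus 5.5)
Your proposal is correct and follows the paper's route: the paper deduces this corollary by applying Lemma \ref{curveStraightening} with $a=0$, $b=d+1$ (variables $z_2,u_1,\ldots,u_d$) and $c=1$ (variable $z_1$). Your Step 1 uses B\^ocher's theorem and continuity of a Wronskian at a fixed $t_0$ to check that the rank hypothesis \eqref{rankCondition} holds on a whole neighbourhood of $q$, which makes explicit a verification the paper leaves implicit.
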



\section{Proof of Theorem \ref{characterizationOfNikodymSets}}\label{ProofOfThmCharacterizationOfNikodymSetsSection}
In this section we will prove Theorem \ref{characterizationOfNikodymSets}. Before doing so, it will be helpful to introduce the following definition. Recall Definition \ref{definitionOfNikodymSet}, which describes when a family of curves admits Nikodym sets. The following definition describes the ``largest possible'' $F$ for a given set $E$.

\begin{definition}\label{defnFGammaIW}
Let $I\subset\RR$ be an interval, $W \subset\RR^{2}\times\RR^d$, $\gamma\colon I\times W\to \RR$, and $E\subset\RR^2$. Define 
\begin{align}\label{260314e2_14}
    F^\gamma_{I,W}(E)=\left\{z: \text{There exists }u\text{ such that }(z;u)\in W\text{ and } |(\star)_1|> 0\right\},
    \end{align}
    where 
    \begin{equation}
        (\star)_1:= \big\{t\in I: (t,\gamma(t;z;u))\in E\big\}.
    \end{equation}
\end{definition}
In the above definition, $|(\star)_1|$ denotes the one-dimensional Lebesgue measure of $(\star)_1$ (a priori, we do not know if this set is measurable, but if $(\star)_1$ is not measurable then the condition $|(\star)_1|>0$ fails). Note that in the above definition, we make no assumptions about the regularity of the set $E$ (and hence have no guarantees about the set $F$). In practice, however, we will restrict attention to $E$ Borel, and $\gamma$ analytic.

\subsection{Case \ref{CaseMc=Mp+2}}
It suffices to prove that for every point $q_0\in W$, there is a neighbourhood $W_1\subset W$ so that the family of curves $\Gamma_1$ associated to $\gamma|_{W_1}$ does not admit Nikodym sets, and each set $\beta^*$ associated to $\Gamma_1$ is at most countable (this will imply that for each curve $\beta=\beta_{z,u}$ we have that $\beta^*$ is at most countable, and in particular does not contain a set with positive one-dimensional Hausdorff measure).

Fix a choice of $q_0\in W$. Without loss of generality we may suppose that $W$ is a sufficiently small neighbourhood of $q_0$. By Corollary \ref{reduceTo_d=m_p_Cor}, we may suppose that $m_p=d$. 

Let $E\subset\RR^2$ with $|E|=0$. Our goal is to prove that $|F^\gamma_{I, W}(E)|=0$.
Define the $(d+2)\times(d+2)$ matrix
\begin{align}
    \mathfrak{C}(t,z,u):=\begin{bmatrix}
        \partial_{z_1}\gamma & \partial_{z_2}\gamma & \partial_{u_1}\gamma & \cdots & \partial_{u_d}\gamma \\
        \partial_{z_1}\partial_t\gamma & \partial_{z_2}\partial_t\gamma & \partial_{u_1}\partial_t\gamma & \cdots & \partial_{u_d}\partial_t\gamma \\
        \cdots & \cdots  & \cdots & \cdots & \cdots \\
        \partial_{z_1}\partial_t^{d+1}\gamma & \partial_{z_2}\partial_t^{d+1}\gamma & \partial_{u_1}\partial_t^{d+1}\gamma & \cdots & \partial_{u_d}\partial_t^{d+1}\gamma
     \end{bmatrix}.
\end{align}
We have that $\mathfrak{C}(t,z,u)$ is analytic in $t,z,u$.  In particular, we can cover the set $\{ (t,z,u)\colon \det\big(\mathfrak{C}(t,z,u)\big)\neq 0\}$ by a countable union of sets of the form $I_j\times W_j$, with $I_j\subset I$ closed and $W_j\subset W$ open. Let $\gamma_j$ be the restriction of $\gamma$ to $I_j\times W_j$.  Applying Corollary \ref{corOfBddMaximalFn} to each such set, we conclude that
\[
|F^{\gamma_j}_{I_j,W_j}(E)|=0.
\]

Let $F_0$ be the set of points $z$ for which there exists $u$ so that $(z, u)\in W$, and
\[
|\{t\colon\det\big(\mathfrak{C}(t, z, u)\big)=0 \}|>0.
\]

Then
\[
F^\gamma_{I,W}(E) \subset F_0\cup \bigcup_j F^{\gamma_j}_{I_j,W_j}(E).
\]
However, since
\begin{equation}
    \mathfrak{r}_c^\gamma =d+2\ \textrm{on}\ W,
\end{equation}
by B\^ocher's theorem (Theorem \ref{BocherThm}) we see that for every $q=(z, u)\in W$, we have 
\begin{align}\label{260413e2_58c}
    \det\big(\mathfrak{C}(t,z,u)\big)\neq 0\quad\textrm{for all but finitely many}\ t\in I.
\end{align}
In particular, $F_0=\emptyset$. We conclude that $|F^\gamma_{I,W}(E)|=0$.

Finally, for each index $j$ and each pair $(z,u)\in W_j$, there are at most countably many $(z',u')\in W_j$ with $\beta_{z,u}=\beta_{z',u'}$. This is because for $t_0\in I_j$, the map $(z,u)\mapsto \big( \gamma(t_0,z,u),\partial_t\gamma(t_0,z,u),\ldots,\partial_t^{d+1}\gamma(t_0,z,u)\big)$ is a local diffeomorphism on $W_j$. We conclude that for each pair $(z,u)\in W$, there are at most countably many $(z',u')\in W$ with $\beta_{z,u}=\beta_{z',u'}$.

\medskip

\subsection{Case \ref{CaseMc=mp+1}.\ref{CaseMp=0}}
We claim it suffices to prove that for every point $q_0=(z_0,u_0)\in W$, there is a neighbourhood $W_1\subset W$ of $q_0$ so that the following holds. In what follows, $\Gamma_1$ is the family of curves associated to $\gamma|_{W_1}$. 
\begin{enumerate}[(i)]
\item\label{establishLocalCurveReuse} $\beta_{z_0,u_0}$ is reused near $(z_0,u_0)$
\item\label{Gamma1StronglyCovered} $\Gamma_1$ is strongly covered by foliations; indeed, \emph{every} curve $\beta_{z,u}$ with $(z,u)\in W_1$ is covered, not just the reused curves (though in Case \ref{CaseMc=mp+1}.\ref{CaseMp=0} the two are equivalent, since every curve is reused).
\item\label{noNikodymNoMassiveReuse} $\Gamma_1$ does not admit Nikodym sets, and no curve in $\Gamma_1$ is massively reused. 
\end{enumerate}
Item \ref{establishLocalCurveReuse}, proved for every $q_0\in W$, immediately implies its analogue in Theorem \ref{characterizationOfNikodymSets}, Case \ref{CaseMc=mp+1}.\ref{CaseMp=0}. Item \ref{noNikodymNoMassiveReuse} implies its analogue by Remark \ref{curveReuseIsLocalRemark}. To show that Item \ref{Gamma1StronglyCovered} implies its analogue, cover $W$ by a countable family of such open sets $\{W_i\}$ and take the union of the corresponding countable sets of vector fields. Since every curve in each local family is covered, the resulting countable family of vector fields covers every curve in $\Gamma$.

It remains to establish the above items. Fix a choice of $q_0=(z_0,u_0)\in W$, where $z_0=(z_{1,0},z_{2,0})$. By the discussion above (i.e.~abusing notation and replacing $W$ by $W_1$), we may suppose without loss of generality that $W$ is a sufficiently small neighbourhood of $q_0$. Applying Corollary \ref{reduceTo_d=m_p_Cor} followed by Corollary \ref{mc=mp+1_Lem}, we may suppose that $d=0$, and there is an analytic function $\widetilde z_2(z)$ satisfying $\widetilde z_2(z_{1,0}, z_2)=z_2$, so that $\gamma(t, z_1,\widetilde z_2(z))$ is independent of $z_1$, i.e.~$\gamma(t, z_1,\widetilde z_2(z))=h(t,z_2),$ where $\partial_{z_2} h(t,z_2)$ does not vanish identically. Each curve $\beta$ is of the form $\beta_{z_{1,0},z_2}=\{(t, h(t,z_2))\colon t\in I\}$, where $z_{1,0}$ is the first coordinate of the point $z_0$ chosen above, and $z_2$ ranges freely over a small neighbourhood of $z_{2,0}$. Note that for each $z_2$, we have that $\partial_{z_2} h(\cdot ,z_2)\not\equiv 0$, and hence $\partial_{z_2} h(t ,z_2) \neq 0$ for all but finitely many $t\in I.$

It is straightforward to verify that no curve is massively reused. Indeed, suppose that the curve $\beta_{z^\dag}$ is massively reused, i.e. the definable set $\{z^*\colon \beta_{z^*}=\beta_{z^\dag}\}$ has positive 2-dimensional Lebesgue measure. Since this set is definable, it must therefore contain a ball $B(z^{**},\eps)$. But this in turn forces $\partial_{z_1}\gamma(\cdot, z)=0$ and $\partial_{z_2}\gamma(\cdot, z)= 0$ for all $z$ in a neighbourhood of $z^{**}$, and in particular $\mathfrak{r}_c^\gamma(z^{**})=0$. By hypothesis, $\mathfrak{r}_c^\gamma(z)=1$ for all $z\in W$. 

To show that $\gamma$ is covered by foliations, we use the following lemma

\begin{lemma}
Let $I,J$ be intervals and let $h\colon I\times J\to\RR$ be analytic on a neighbourhood of $I\times J$, with $\partial_s h(\cdot,s)\not\equiv 0$ for each $s\in J$. Then there is a countable set of smooth vector fields $\{V_i\}$ so that for each $s\in J$ and a.e.~$t\in I$, there is an index $i$ so that $V_i(t, h(t,s))\neq 0$, and $V_i(t,h(t,s))$ is parallel to $(1, \partial_t h(t,s))$.
\end{lemma}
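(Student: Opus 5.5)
The plan is to build the vector fields locally, near points of $I\times J$ where the map $\Phi(t,s)=(t,h(t,s))$ is a local diffeomorphism, and then to take a countable subcover.

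Let $Z=\{(t,s)\in I\times J\colon \partial_s h(t,s)=0\}$. The Jacobian of $\Phi$ is
\[
\det D\Phi(t,s)=\partial_s h(t,s).
\]
By hypothesis $\partial_s h(\cdot,s)\not\equiv 0$ and is analytic for each fixed $s$, so for every $s\in J$ the slice $Z_s=\{t\colon (t,s)\in Z\}$ is finite, and in particular has measure zero. So it suffices to cover, for each $s$, every $t\notin Z_s$.

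Fix a point $(t_0,s_0)$ in the open set $(I\times J)\setminus Z$. By the inverse function theorem there is a small open rectangle $R=I_0\times J_0$ around $(t_0,s_0)$ such that $\Phi|_R$ is a diffeomorphism onto an open set $\Phi(R)\subset\RR^2$. On $\Phi(R)$ define the vector field
\[
\tilde V(x)=\big(1,\ \partial_t h(\Phi|_R^{-1}(x))\big),
\]
which is smooth and nonvanishing. Since $\Phi$ is a diffeomorphism on $R$, each $x\in\Phi(R)$ lies on exactly one curve piece $\{(t,h(t,s))\colon t\in I_0\}$ with $s\in J_0$, and $\tilde V$ is tangent to that piece. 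Shrink to a smaller closed rectangle $R'\subset R$ that is still a neighbourhood of $(t_0,s_0)$. Choose a smooth cutoff $\psi$ supported in $\Phi(R)$ with $\psi>0$ on $\Phi(R')$ (for example a smooth function positive on an open set containing the compact set $\Phi(R')$). Set $V=\psi\tilde V$, extended by $0$ outside $\Phi(R)$. Then $V$ is smooth on $\RR^2$, and for $(t,s)\in R'$ we have $V(t,h(t,s))\neq 0$ and $V(t,h(t,s))\parallel(1,\partial_t h(t,s))$.

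The open set $(I\times J)\setminus Z$ is $\sigma$-compact, so it is covered by countably many interiors of such rectangles $R'_i$. Let $\{V_i\}$ be the corresponding fields. Given $s\in J$ and $t\notin Z_s$, the point $(t,s)$ lies in some $R'_i$, and $V_i$ has the required property at $(t,h(t,s))$. Since $Z_s$ is finite, this gives the conclusion for a.e. $t\in I$.

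The only subtle step is ensuring that the local field is tangent to the correct curve. Near $(t_0,h(t_0,s_0))$, other curves $\beta_{s'}$ with $s'$ far from $J_0$ may also pass through points of $\Phi(R)$, with different slopes. The construction only requires tangency to the piece parameterized by $(t,s)\in R'$, and this holds by definition of $\tilde V$. Points lying on other curves are handled by their own rectangles, and the definition of being covered permits a different index $i$ at each point. Endpoint issues, such as $I$ or $J$ being closed, are harmless because $h$ is analytic on a neighbourhood of $I\times J$.
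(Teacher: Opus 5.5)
Your proposal is correct and follows essentially the same route as the paper: invert $(t,s)\mapsto(t,h(t,s))$ near points where $\partial_s h\neq 0$, push forward $(1,\partial_t h)$, multiply by a bump function, and take a countable subcover, discarding the exceptional $t$ with $\partial_s h(t,s)=0$. One small correction: if $I$ is not compact (for example a bounded open interval, where the analytic extension need not reach the endpoints), the slice $Z_s$ is only discrete in $I$, hence at most countable rather than finite, as the paper says. Measure zero is all your argument uses, so this does not affect it.
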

\begin{proof}
Fix a point $(t_0,s_0)$ where $\partial_s h(t_0,s_0)\neq 0$, and hence $G(t,s)=(t, h(t,s))$ is a local diffeomorphism in a neighbourhood of $(t_0,s_0)$. We can (locally) write the inverse as $G^{-1}(x,y) = (x, g(x,y))$. Now define the vector field $V(x,y) = (1, \partial_t h(x, g(x,y)))$, where $\partial_th$ denotes the derivative in the first variable. Then in a neighbourhood of $(t_0,s_0)$, we have  $V(t, h(t,s))=V(G(t,s))=(1, \partial_t h(t,s)).$ We can extend $V$ to a smooth vector field on $\RR^2$ by multiplying by a smooth bump function that is supported in a small neighbourhood of  $G(t_0, s_0)$.

Finally, select a countable set of points $(t_0,s_0)$ and corresponding neighbourhoods, that cover the set $\{(t,s)\colon \partial_s h(t,s)\neq 0\}$. This gives a countable set of vector fields $\{V_i\}$. For each $s\in J$, every point $t\in I$ with $\partial_s h(t,s)\neq 0$ is covered by one of these vector fields; the remaining set of points is at most countable, and in particular has measure 0.
\end{proof}

It remains to show that $\gamma$ does not admit Nikodym sets. By our initial discussion, it suffices to show that there is a neighbourhood $W_1$ of $z_0$ so that for every Borel set $E\subset\RR^2$ with $|E|=0$, we have
\begin{equation}\label{noMeasureOnE}
|F^\gamma_{I,W_1}(E)|=0.
\end{equation}

Fix such an $E$, and let $G(t,z_2) =  (t, h(t,z_2))$, and let $A = G^{-1}(E)=\{(t,z_2)\colon (t,h(t,z_2))\in E\}$. We have 
\[
DG(t, z_2) = \left( 
\begin{array}{cc}
1 & 0 \\
\partial_t h(t,z_2) & \partial_{z_2}h(t,z_2)
\end{array}
\right),
\]
which has determinant $\partial_{z_2}h(t,z_2)$. Thus by the area formula, we have 

\[
\int_A |\partial_{z_2}h(t,z_2)|dtdz_2=\int_E \#(A\cap G^{-1}(q))dq=0.
\]
Since $\partial_{z_2}h(t,z_2)\neq 0$ for a.e. pair $(t,z_2)$, it follows that $|A|=0$. Thus the set of points $z_2$ for which $|\{t\in I \colon (t, h(t,z_2))\in E\}|>0$ has measure 0. Denote this set $Z_2$. Finally, let $H(z_1,z_2) = (z_1, \tilde z_2(z_1,z_2))$, defined in a suitable neighbourhood of $z_0$. We have that $H(z_0)=z_0$, and $H$ is a local diffeomorphism. Select the neighbourhood $W_2$ sufficiently small so that $H|_{W_2}$ is a diffeomorphism. Then $F^\gamma_{I,H(W_2)}(E)\subset (H(\RR\times Z_2\cap W_2))$. Since $|\RR\times Z_2|=0$, we have $|H(\RR\times Z_2\cap W_2)|=0$ and hence $|F^\gamma_{I,H(W_2)}(E)|=0$. Since $H(z_0)=z_0$, we have found a neighbourhood $W_1=H(W_2)$ of $z_0$ satisfying \eqref{noMeasureOnE}, as desired.
\medskip

\subsection{Case \ref{CaseMc=mp+1}.\ref{CaseMp>0}}
It suffices to find a point $q_0\in W$ and a neighbourhood $W_1\subset W$ so that the family of curves $\Gamma_1$ associated to $\gamma|_{W_1}$ admits Nikodym sets, has curve reuse, and is not covered by foliations. We will choose $W_1$ so that $\bar W_1$ is compact and contained in $W$. In particular, $\gamma|_{W_1}$ is definable, as are various auxiliary functions defined using $\gamma$. 

\medskip

\noindent {\bf Step 1.}\\
Fix a point $q_0=(z_0,u_0)\in W$. For notational convenience we will suppose $(z_0,u_0)=(0,0)$. After shrinking $W$ (doing so will not change the conclusions of Case B.ii) and applying Corollary \ref{reduceTo_d=m_p_Cor}  followed by Corollary \ref{mc=mp+1_Lem}, we may suppose that $W = (-\eps, \eps)^2\times(-\eps,\eps)^d$; $m_p=d$; and there are definable analytic functions $\widetilde z_2(z,u)\colon W\to\RR$ and $\tilde u(z,u)\colon W\to\RR^d$ with $\widetilde z_2(0,z_2,u)=z_2$ and $\tilde u(0,z_2,u)=u$  so that 
\begin{equation}\label{gammaIndepZ1}
\gamma(t, z_1,\widetilde z_2(z,u),\tilde u(z,u))=\gamma(t, 0,z_2,u)\quad\textrm{for all}\ z_1\in(-\eps,\eps).
\end{equation}

Since $\partial_{z_2}\tilde z_2(0,z_2,u)=1$, after shrinking $W$ we may suppose that
\begin{equation}\label{z2NonVanishing}
\partial_{z_2}\tilde z_2(z,u)\neq 0\quad\textrm{for}\ (z,u)\in W.
\end{equation}

Note that the condition $m_p=d$ implies that $\partial_{u_1}\gamma(\cdot, z, u)$ is not identically 0 (as a function of $t$). Thus after further shrinking $W$ (and translating the origin if needed) and replacing $I$ by a possibly smaller interval, we may suppose that
\begin{equation}\label{u1NonVanishing}
\partial_{u_1}\gamma(t, z, u)\neq 0\quad\textrm{for}\ (t,z,u)\in I\times W.
\end{equation}
Note that we are free to replace $I$ by a smaller interval, since curve reuse, the existence of a Nikodym set, and the failure to be consistent with a foliation for this smaller interval will imply the same for the original interval.

Define ${\bf 0} = (0,\ldots,0)\in\RR^{d-1}$. Define 
\[
\phi^t(z_2,s) = \gamma(t, 0,z_2; s, {\bf 0}),\qquad \psi^{z_1}(z_2,s)=\widetilde z_2(z; s,{\bf 0}).
\] 
In the above, $\phi$ is a function of the variables $t,z_2,s$. We will often think of $\phi$ as a one-parameter family of maps $\phi^t\colon (z_2,s)\mapsto\phi^t(z_2,s)$. We will write $\nabla \phi$ to denote the gradient in the $(z_2,s)$ variables. Similarly for $\psi$. After a translation, the domains of these functions are $D_1=D_2=(-\eps,\eps)^3$. Note that $D_1$ has coordinates $(t,z_2,s)$, while $D_2$ has coordinates $(z_1,z_2,s)$. By \eqref{u1NonVanishing} we have $\partial_s \phi^t(z_2,s)\neq 0$ and hence $\nabla\phi^t(z_2,s)\neq 0$ on $D_1$.  By \eqref{z2NonVanishing} we have $\partial_{z_2}\psi^{z_1}(z_2,s)\neq 0$ and hence $\nabla\psi^{z_1}(z_2,s)\neq 0$ on $D_2$.

\medskip

\noindent {\bf Step 2.}\\
We claim that
\begin{equation}\label{nonVanishingGradPhi}
\partial_t \Big(\frac{\nabla\phi^t(z_2,s)}{\Vert \nabla\phi^t (z_2,s) \Vert}\Big)\quad\textrm{does not vanish identically on}\ D_1.
\end{equation}
We verify this claim as follows. Define $G(z,u)=(\widetilde z_2(z,u), \tilde u(z,u))$. Define $H_{z,s}(t) = (\partial_{z_2} \gamma,\partial_{u_1}\gamma,\ldots,\partial_{u_d}\gamma)^T$, evaluated at the point $(t, z_1, \widetilde z_2(z, s, {\bf 0}),\tilde u(z, s,{\bf 0}))$. The $d+1$ entries of $H_{z,s}(t)$ are linearly independent (as functions of $t$). We have
\[
\nabla\phi^t(z_2,s) = 
\left(\begin{array}{l}
 \partial_{z_2} G(z,s,{\bf 0})\cdot H_{z,s}(t)\\
 \partial_{u_1}G(z,s,{\bf 0})\cdot H_{z,s}(t)
 \end{array}\right).
\]
Since $F(z,u)=(z_1, \widetilde z_2(z,u),\tilde u(z,u))$ is a local diffeomorphism, we have that for each $(z,u)$, the vectors $\partial_{z_2} G(z,u)\in\RR^{d+1}$ and $\partial_{u_1}G(z,u)\in\RR^{d+1}$ are linearly independent. 

For $(z_2,s)$ fixed, we claim that the functions $A(t) = \partial_{z_2} \phi^t(z_2,s)$ and $B(t)=\partial_s \phi^t(z_2,s)$ are linearly independent as functions of $t$. Indeed, if this was not the case then there are numbers $a,b$ (not both zero) so that $aA(t)+bB(t)=0$ for all $t$, and thus 
\[
\big(a \partial_{z_2} G(z,s,{\bf 0}) + b \partial_{u_1}G(z,s,{\bf 0})\big)\cdot H_{z,s}(t)=0\quad\textrm{for all}\ t.
\]
Since the entries of $H_{z,s}(t)$ are linearly independent as functions of $t$, this would imply $a \partial_{z_2} G(z,s,{\bf 0}) + b \partial_{u_1}G(z,s,{\bf 0})=0$, but since $\partial_{z_2} G(z,s,{\bf 0})$ and $\partial_{u_1}G(z,s,{\bf 0})$ are linearly independent, this in turn implies that $a=b=0$. 

Finally, if \eqref{nonVanishingGradPhi} was false, then we could write $(A(t),B(t))=\lambda(t)(a,b)$ for some fixed vector $(a,b)$, but this contradicts the fact that $A(t)$ and $B(t)$ are linearly independent as functions of $t$. We conclude that \eqref{nonVanishingGradPhi} is true.

\medskip

\noindent {\bf Step 3.}\\
In this step we will show that the locally reused curves are not covered by foliations. For $(z_2,s)\in (-\eps,\eps)^2$, define the jet
\[
J_t(z_2,s) = \big( \gamma(t, 0,z_2; s, {\bf 0}),\ \partial_t \gamma(t, 0,z_2; s, {\bf 0}) \big) = \big(\phi^t(z_2,s),\ \partial_t \phi^t(z_2,s)\big).
\]
We can compute
\[
|\det DJ_t| = \Big|\det\left(
\begin{array}{cc}
\partial_{z_2} \phi^t & \partial_{z_2}\partial_t \phi^t \\
\partial_s    \phi^t & \partial_s \partial_t \phi^t
\end{array}
\right)
\Big| = \big\Vert \nabla \phi^t\big\Vert^2\ \Big\Vert\partial_t \Big(\frac{\nabla\phi^t}{\Vert \nabla\phi^t \Vert}\Big)\Big\Vert.
\]
We have that the RHS does not vanish identically on $D_1$ (we established this for $\Vert\nabla \phi^t\big\Vert^2$  in Step 1, and for $ \big\Vert\partial_t \big(\frac{\nabla\phi^t}{\Vert \nabla\phi^t \Vert}\big)\big\Vert$ in Step 2). Thus we can (temporarily) shrink the domain and find a smaller set $D_1'$ on which $J_t$ is a diffeomorphism, i.e.~the map
\[
(z_2,s)\mapsto \big( \gamma(t, 0,z_2; s, {\bf 0}),\ \partial_t  \gamma(t, 0,z_2; s, {\bf 0})\big)
\]
is a diffeomorphism. Shrinking $D_1'$ further, we will suppose it is a Cartesian product of three intervals $I_t\times I_{z_2}\times I_s$. If $V_i\colon\RR^2\to\RR^2$ is a smooth vector field, then the set of pairs $(z_2,s)\in I_{z_2}\times I_s$ with
\begin{equation}\label{tangentToVectorSpace}
V_i\big(t, \gamma(t, 0,z_2; s, {\bf 0})\big)\neq 0,\qquad V_i\big(t,\gamma(t, 0,z_2; s, {\bf 0})\big)\wedge\big(1, \partial_t  \gamma(t, 0,z_2; s, {\bf 0})\big)=0
\end{equation}
is a (possibly empty) one-dimensional smooth submanifold. Denote this set by $C_i(t)\subset I_{z_2}\times I_s$. Thus if $\{V_i\}$ is a countable set of smooth vector fields, then the set of pairs $(z_2,s)$ for which \eqref{tangentToVectorSpace} holds for some $i$ is contained in a countable union of smooth curves $\bigcup_i C_i(t)$, and thus has two-dimensional Lebesgue measure 0. We conclude that
\[
|\{(t, z_2, s)\in I_t\times I_{z_2}\times I_s \colon (z_2,s)\in \bigcup_i C_i(t)\}|=0.
\] 
If the curve $\beta_{(0, z_2), (s, {\bf 0})}$ is covered by $\{V_i\}$, then $(z_2,s)\in \bigcup_i C_i(t)$ for a.e.~$t\in I_t$. We conclude that
\[
|\{(z_2,s)\in I_{z_2}\times I_s\colon \beta_{(0, z_2), (s, {\bf 0})}\ \textrm{is covered by}\ \{V_i\} \}|=0,
\]
and hence we can select $s_0\in I_s$ so that
\[
|\{z_2\in I_{z_2}\colon \beta_{(0, z_2), (s_0, {\bf 0})}\ \textrm{is not covered by}\ \{V_i\} \}|=|I_{z_2}|>0.
\]
Denote the above set by $Z_2\subset I_{z_2}$. Recalling \eqref{gammaIndepZ1}, we have 
\begin{equation*}
\begin{split}
\{ (z_1,z_2) & \colon\textrm{exists}\ u\ \textrm{such that}\ \beta_{z_1,z_2,u}\ \textrm{is not covered by}\ \{V_i\} \}\\ 
&\supset \{ (z_1^*, \widetilde z_2(z_1^*, z_2^*; s_0,{\bf 0}))\colon z_1^*\in (-\eps,\eps),\ z_2^*\in Z_2\}.
\end{split}
\end{equation*}
The latter set has positive measure, since by \eqref{z2NonVanishing}, the map $(z_1^*,z_2^*)\mapsto (z_1^*, \widetilde z_2(z_1^*, z_2^*; s_0,{\bf 0}))$ is a local diffeomorphism. We conclude that the former set has positive measure. However, since every curve $\beta_{z_1,z_2,u}$ is locally reused near $(z_1,z_2,u)$, we conclude that the locally reused curves are not covered by $\{V_i\}$.

\medskip

\noindent {\bf Step 4.}\\
In this step, we consider the case where
\begin{equation}\label{vanishingGradPsi}
\partial_{z_1} \Big(\frac{\nabla\psi^{z_1}(z_2,s)}{\Vert \nabla\psi^{z_1} (z_2,s) \Vert}\Big)\quad\textrm{vanishes identically on}\ D_2.
\end{equation}
If \eqref{vanishingGradPsi} holds, then there is an analytic function $g$ so that $\widetilde z_2(z, s,{\bf 0})=\psi^{z_1}(z_2,s) = g(z_1, \psi^0(z_2,s))$. Since $\psi^0(z_2,s)=z_2$, this means $\widetilde z_2(z, s,{\bf 0})=g(z)$, i.e. $\widetilde z_2$ is independent of $s$. Abusing notation, we will write $\widetilde z_2(z)$ in place of $\widetilde z_2(z,s,{\bf 0})$. Since $\widetilde z_2(0,z_2)=z_2$, we have that $G(z)=(z_1,\widetilde z_2(z))$ is a diffeomorphism (and in particular, bi-Lipschitz) in a neighbourhood of $(0,0)$. Let $G^{-1}$ denote the inverse of this map, restricted to a suitably small neighbourhood of $(0,0)$. 

By \eqref{u1NonVanishing} we have $\partial_{u_1} \gamma(t, 0, z_2, u_1, {\bf 0})\neq 0$ on $I\times W$. Thus we may apply Corollary \ref{localWisewellThm} to conclude that there exists a null set $E\subset\RR^2$, and a positive-measure set $Y\subset(-\eps,\eps)$ so that for each $z_2\in Y$, there exists $s(z_2)$ so that the curve
\[
\alpha_{z_2, s(z_2)} = \{ (t, \gamma(t, 0, z_2, s(z_2), {\bf 0}))\colon t\in I\}
\]
satisfies $\mathcal{H}^1(\alpha_{z_2, s(z_2)}\cap E)>0$. 

Since $\beta_{(z_1, \widetilde z_2(z)), \tilde u(z, s(z_2), {\bf 0})}=\alpha_{z_2, s(z_2)}$ for $z_1\in(-\eps,\eps)$, we conclude that for all $(z_1, \widetilde z_2)\in G\big( (-\eps,\eps)\times Y\big)$ (this set has positive measure), we have that 
\[
\mathcal{H}^1(E \cap \beta_{(z_1, \widetilde z_2(z)), \tilde u(z, s(z_2), {\bf 0})}) > 0.
\]
Thus $\gamma$ admits Nikodym sets.

\noindent {\bf Step 5.}\\
Finally, we consider the case where \eqref{vanishingGradPsi} fails, i.e.~
\begin{equation}\label{nonVanishingGradPsi}
\partial_{z_1} \frac{\nabla\psi^{z_1}(z_2,s)}{\Vert \nabla\psi^{z_1} (z_2,s) \Vert}\quad\textrm{does not vanish identically on}\ D_2.
\end{equation}

To summarize, we have functions $\phi^t(z_2,s)$ and $\psi^{z_1}(z_2,s)$ satisfying \eqref{nonVanishingGradPhi} and \eqref{nonVanishingGradPsi}, respectively. By Proposition \ref{prescribedProjectionsTwoDirections}, there exists a Borel measurable set  $G\subset(-\eps,\eps)^2$ with the following properties:
\begin{enumerate}[(a)]
    \item\label{psiG>0} There is a positive measure set of $z_1\in(-\eps,\eps)$ for which $|\psi^{z_1}(G)|>0$. 
    \item\label{phiG=0} There is an interval $I'\subset (-\eps,\eps)$ so that for a.e.~$t\in I'$, we have $|\phi^t(G)|=0$. 
\end{enumerate}

Define
\[
    E_0 = \bigcup_{t\in I'}(t, \phi^t(G))=(I'\times\RR)\cap \bigcup_{(z_2, s)\in G}\beta_{(0,z_2),(s,{\bf 0})}
\]
$E_0$ is measurable, because it is the projection of the \emph{Borel} set $\{(t,y,z_2,s)\colon t\in I',\ (z_2,s)\in G,\ y = \gamma(t, 0, z_2, s, {\bf 0})\}$ to the first two coordinates. Since $E_0$ is measurable, Item \ref{phiG=0} and Fubini's theorem imply that $|E_0|=0$. Thus we can select a Borel set $E\supset E_0$ with $|E|=0$. 

As a consequence of Item \ref{psiG>0}, we have that our \emph{original} region $W$ satisfies $|F^\gamma_{I,W}(E)|>0$. Hence $\Gamma$ admits Nikodym sets.

\subsection{Case \ref{CaseMp=mc}}
After permuting the indices $1,\ldots,d$, we may suppose that $\partial_{u_1}\gamma,\ldots,\partial_{u_{m_p}}\gamma$ are independent. Let $(z_0,u_0)\in W$ and apply Lemma \ref{curveStraightening} to $\gamma(t, z, u)$ with $a=d-m_p$, $b = m_p$, $c=2$ (i.e.~the $v$ variables correspond to $u_{m_p+1},\ldots,u_d$, the $w$ variables correspond to $u_1,\ldots,u_{m_p}$, and the $\eta$ variables correspond to $z=(z_1,z_2)$). 

We obtain an open neighbourhood $W_1\subset W$ of $(z_0,u_0)$, and an analytic function $\tilde u\colon W_1\to\RR^d$ so that $\gamma(t, z,\tilde u(z,u))=\gamma(t, z_0,u)$ for all $(z,u)\in W_1$. After further shrinking $W_1$, we may suppose that $W_1$ is a Cartesian product $W_1 = W_z\times W_u$.  The curve $\beta=\beta_{z_0,u_0}$ satisfies $\beta^*\supset \pi_z(W_1)$. Since $W_1$ is open, we have $|\pi_z(W_1)|>0$, and hence $\beta$ is massively reused. In particular, by setting $E = \beta$ we have $|E|=0$ and $|F^\gamma_{I,W}(E)|\geq|\beta^*|>0$, so $\gamma$ admits Nikodym sets.


\section{Proof of Theorem \ref{NikodymIffCurveReuse}}\label{ProofOfThmNikodymIffCurveReuseSection}
We conclude by showing how Theorem \ref{characterizationOfNikodymSets} and Proposition \ref{breakIntoConstantMpAndMc} combine to prove Theorem \ref{NikodymIffCurveReuse}. Let $I\subset\RR,$ $W\subset\RR^2\times\RR^d$, and $\gamma\colon I\times W\to\RR$ be as in the statement of Theorem \ref{NikodymIffCurveReuse}. First, if $W$ is a (bounded) ball and $\gamma$ extends to an analytic function on a neighbourhood of $I\times\bar W$, then the conclusion of Theorem \ref{NikodymIffCurveReuse} follows by applying Proposition \ref{breakIntoConstantMpAndMc} followed by Theorem \ref{characterizationOfNikodymSets}.

For the general case, cover $W$ by a countable collection of bounded balls $\{W_i\}$, where $\overline W_i\subset W$ for each index $i$. Let $\Gamma_i$ be the family of curves associated to $\gamma|_{W_i}$. By Remark \ref{curveReuseIsLocalRemark}, we have that 
\begin{itemize}
\item $\Gamma$ has massive curve reuse if and only if the same is true for at least one $\Gamma_i$.
\item $\Gamma$ admits Nikodym sets if and only if the same is true for at least one $\Gamma_i$. 
\end{itemize}
Suppose that $\Gamma$ does not have massive curve reuse (and hence neither does any $\Gamma_i$). To conclude the proof, we must show that $\Gamma$ has local curve reuse not covered by foliations if and only if the same holds for at least one $\Gamma_i$. It is straightforward to see that if at least one $\Gamma_i$ has local curve reuse not covered by foliations, then the same is true for $\Gamma$. 

We now argue in the opposite direction. Suppose that each $\Gamma_i$ does not have local curve reuse not covered by foliations, i.e.~for each index $i$, there is a countable set of vector fields $\mathcal{V}_i$ so that $|Q^{\gamma_i}(W_i,\mathcal{V}_i)|=0$ (recall the definition of this set from \eqref{setOfLocallyReusedCuvesCoveredByVi}). Let $\mathcal{V}=\bigcup \mathcal{V}_i$. If $(z,u)\in W$ and $\beta_{z,u}$ has local curve reuse near $(z,u)$, then there exists an index $i$ such that $\beta_{z,u}$ has local curve reuse in $W_i$ near $(z,u)$. Thus $Q^{\gamma}(W,\mathcal{V}) \subset \bigcup_i Q^{\gamma_i}(W_i,\mathcal{V}_i)$, and hence $|Q^{\gamma}(W,\mathcal{V})|=0$, i.e.~$\Gamma$ does not have local curve reuse not covered by foliations.


\appendix


\section{The axis-parallel elliptic maximal operator}\label{ellipticMaximalOperatorSection}
In this section we will show that the axis-parallel elliptic maximal operator is bounded in $L^p$ for all $p>3$. We begin with a precise definition. For $f\colon\RR^2\to\CC$ and $z=(z_1,z_2)\in\RR^2$, define the axis-parallel elliptic maximal operator
\[
\mathcal{E}f(z) = \sup_{a,b\in[1,2]}\int_0^{2\pi} |f(z_1+a\cos\theta, z_2+b\sin\theta)|d\theta.
\]
This maximal operator was first introduced by Lee, Lee, and Oh in \cite{LLO25}. In that paper it is called the ``strong circular maximal function.'' Lee, Lee and Oh proved that $\mathcal{E}$ is bounded in $L^p(\RR^2)$ for $p>4$, and they proved that it is unbounded for $p\leq 3$.

\begin{theorem}\label{axisParallelElliptic}
The axis-parallel elliptic maximal operator $\mathcal{E}$ is bounded on $L^p$ for all $p>3$.
\end{theorem}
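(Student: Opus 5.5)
We plan to follow the strategy of Section \ref{interpolationSubSection}, with the geometric $L^{d+2}$ estimate of Theorem \ref{ZahlThm} replaced by a sharper geometric estimate at $L^3$ coming from \cite{PYZ22}. Axis-parallel ellipses form a family with $d=2$: $z\in\RR^2$ is the center and $u=(a,b)\in[1,2]^2$ are the semi-axes. First localize. Using a partition of unity in $\theta$, we split each ellipse into finitely many short arcs. Near $\theta=\pm\pi/2$ we write each arc as a graph $y=\gamma(t,z,u)=z_2\pm b\sqrt{1-(t-z_1)^2/a^2}$. Near $\theta=0,\pi$ we interchange the roles of the two coordinates. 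Since the sup of a finite sum is bounded by the sum of sups, it suffices to bound each localized operator $M^\gamma_\chi$.

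Next we verify the multiparameter cinematic curvature condition \eqref{soggeLocalMultiParam} with $d=2$. This is a $4\times 4$ determinant in $\partial_{z_1},\partial_{z_2},\partial_a,\partial_b$ of $\gamma,\partial_t\gamma,\partial_t^2\gamma,\partial_t^3\gamma$. We expect it to be nonvanishing on the localized pieces, at least after removing neighbourhoods of finitely many special points. This is consistent with \cite{LLO25} treating this family as a $d=2$ cinematic family. If degeneracies do occur, they should sit at isolated values of $t-z_1$, and there we would further localize and argue separately. Corollary \ref{maximalFnBdsCor} then gives the high-frequency decay
\[
\Vert M^\gamma_\chi P_kf\Vert_{L^p}\lesssim 2^{-(\frac1p-\eps)k}\Vert f\Vert_p,\qquad p\ge 12.
\]

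The key new input is an $L^3$ estimate with only $2^{k\eps}$ loss:
\[
\Vert M^\gamma_\chi P_kf\Vert_{L^3(\RR^2)}\lesssim_\eps 2^{k\eps}\Vert f\Vert_{L^3}.
\]
This is the axis-parallel analogue of \eqref{LdBdZahl}, with $d+2=4$ improved to $d+1=3$. As indicated after Theorem \ref{maximalFnBound} and in Proposition \ref{230609theorem2_1}, we would obtain it from the estimate in \cite{PYZ22}. That estimate is a maximal (or square-function) bound for $\delta$-thickened axis-parallel ellipses, and the special structure of the family makes a one-parameter sup suffice, as in \cite[Section 7]{Zah26}. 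We pass from thickened curves to $P_k$-blurred genuine curves by the same dyadic decomposition over $\delta=2^{-\ell}$ used in the bullet points after Theorem \ref{ZahlThm}.

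Finally, we interpolate (Riesz--Thorin, with $k$ fixed) between the $L^3$ bound with $2^{k\eps}$ loss and the $L^{12}$ bound with $2^{-(1/12-\eps)k}$ decay. For every $3<p<\infty$ this gives $2^{-\eta k}$ decay for some $\eta>0$. We then sum over $k$ and handle low frequencies by the trivial bound $M^\gamma_\chi P_{\le 0}f\lesssim$ Hardy--Littlewood maximal function, exactly as in the proof of Theorem \ref{maximalFnBound}. Applying this to $|f|$ and summing over the finitely many arcs gives boundedness of $\mathcal{E}$ on $L^p$ for $p>3$; the case $p=\infty$ is trivial.

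The main obstacle is the $L^3$ step. We must check that the result of \cite{PYZ22} applies to this exact family, including its curvature and transversality hypotheses, and that the thickened-to-blurred reduction loses only $2^{k\eps}$. A secondary issue is the possible degeneracy of the cinematic determinant at special points of the ellipse, which would force an extra localization.
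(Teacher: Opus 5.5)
\paragraph{Gap: decay near the vertices.} Your overall plan is the paper's: an $L^3$ bound with $2^{k\eps}$ loss from \cite{PYZ22}, $L^{12}$ high-frequency decay from Theorem \ref{averagingOp}, interpolation, and summation in $k$. The trouble is the issue you label secondary, which is where the real work lies, and your proposal does not supply it.

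For the arc $y=z_2-b\sqrt{1-(t-z_1)^2/a^2}$, the determinant in \eqref{soggeLocalMultiParam} is, up to sign, $6b^2(t-z_1)/(a^2-(t-z_1)^2)^{9/2}$. It vanishes exactly at $t=z_1$, i.e.\ at the vertex of the ellipse; the side vertices behave the same way in swapped coordinates. These points lie on every ellipse, so no partition in $\theta$ removes them. Consequently Corollary \ref{maximalFnBdsCor} cannot be applied on any fixed neighbourhood of a vertex.

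``Further localize and argue separately'' does not close this. On a fixed arc $|t-z_1|<c$, your only tools are the $L^3$ bound with $2^{k\eps}$ loss and the trivial $L^\infty$ bound. These interpolate to no decay in $k$, so the sum over $k$ diverges.

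The missing idea is a $k$-dependent truncation with a cutoff $\chi_{2^{-k\eta}}(t-z_1)$:
\begin{itemize}
\item On $|t-z_1|\lesssim 2^{-k\eta}$, the $L^\infty$ norm is $\lesssim 2^{-k\eta}$ because the arc is short, and the $L^3$ norm is $\lesssim_\eta 2^{k\eta}$. Interpolating gives $2^{-k\eta/2}$ at $L^{12}$.
\item On the complement, the determinant is $\gtrsim 2^{-k\eta}$. Here you need the \emph{quantitative} form of Theorem \ref{averagingOp}, whose constant is polynomial in the reciprocal of the lower bound $\kappa$ of the determinant (Remark \ref{remark:260920_TY}). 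This gives $2^{-k/12+Ck\eta}$, and taking $\eta$ small yields decay.
\end{itemize}
A dyadic decomposition in $|t-z_1|$ with rescaling would also work, but some quantitative argument of this kind is needed. The qualitative Corollary \ref{maximalFnBdsCor} does not suffice.

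\paragraph{The same degeneracy in the $L^3$ step.} You flagged this step as the main obstacle without resolving it. If you freeze $z_1$ and apply Lemma \ref{lemma:PYZ_TY} with $v=z_2$, the relevant $3\times 3$ determinant in $(z_2,a,b)$ is, up to sign, $2b(t-z_1)^3/(a(a^2-(t-z_1)^2)^3)$. It again vanishes at the vertex, so this choice only handles the part of the arc with $|t-z_1|\ge c$.

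For $|t-z_1|<c$, the paper uses the shear $z_2\mapsto z_1+z_2$ and takes $v=z_1$ as the free variable. For $\tilde\phi(t;z_1,a,b)=z_1-b\sqrt{1-((z_1-t)/a)^2}$, the determinant equals $-2$ at the vertex. Lemma \ref{lemma:PYZ_TY} therefore applies on a small neighbourhood, provided the parameter ranges are localized (e.g.\ $a,b$ near $1$, $z$ in a small square). This change of variables is the concrete check your $L^3$ step needs; without it the hypotheses of \cite{PYZ22} fail on every arc containing a vertex.
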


To prove Theorem \ref{axisParallelElliptic}, we first denote by $c<1/10$ a small enough absolute constant (independent of $p$) to be determined at the end of Section \ref{sub:proof_L3}. By a trivial partition and rescaling, it will be helpful to restrict the ranges of $a$ and $b$ to a smaller interval, say, $[1,1+c]$. Also, by symmetry, it suffices to prove the bound for $\mathcal E$ with the interval of integration changed to $\theta\in [\frac \pi 4,\frac{3\pi}4]$, which then in particular implies that $a |\cos \theta|<0.8$. This allows us to write the ellipses as graphs of functions over $[-0.8,0.8]$, and it suffices to consider the following maximal function instead:
\begin{equation}
    \mc{M}_{\ellipse}f(z):= 
    \sup_{a, b\in [1,1+c]}
    \left|\int_{\R} f\Big(z_1-t, 
    z_2-b\sqrt{
    1-\pnorm{
    \frac{t}{a}
    }^2
    }
    \Big)  \chi(t; a, b)dt\right|,
\end{equation}
where $\chi(t; a, b): \R\times \R^2\to \R$ is a smooth bump function that vanishes outside $(-0.9,\,0.9)\times (0.9,\,1.2)^2$. By a simple tiling argument, Theorem \ref{axisParallelElliptic} is thus a consequence of the following estimate. 
\begin{proposition}\label{230609theorem2_1}
    We have the maximal operator bound for every $p>3$:
    \begin{equation}
        \norm{
        \mc{M}_{\ellipse} f
        }_{L^p([0,c]^2)}
        \lesim_{p, \chi}
        \norm{f}_{L^p(\R^2)}.
    \end{equation}
\end{proposition}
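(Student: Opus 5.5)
The plan is to follow the strategy of the proof of Theorem \ref{maximalFnBound}, replacing the general geometric $L^{d+2}$ estimate (Theorem \ref{ZahlThm}) by a sharper $L^3$ estimate that exploits the axis-parallel structure. Here $d=2$, with $u=(a,b)$ and
\[
\gamma(t,z,a,b)=z_2-b\sqrt{1-(t/a)^2}.
\]
The horizontal variable is reparametrized as $t\mapsto z_1-t$, which does not affect the argument. The first step is to verify the multiparameter cinematic curvature condition \eqref{soggeLocalMultiParam} for this $\gamma$ on the support of $\chi$. This is a $4\times 4$ determinant in $\partial_{z_1},\partial_{z_2},\partial_a,\partial_b$ and $t$-derivatives up to order $3$; I expect it to be nonvanishing for $|t|<0.9$ and $a,b$ near $1$, possibly after shrinking $c$. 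Granting this, Corollary \ref{maximalFnBdsCor} gives high frequency decay
\[
\Vert \mc{M}_{\ellipse} P_k f\Vert_{L^p}\lesssim 2^{-(1/p-\eps)k}\Vert f\Vert_p\qquad\text{for } p\ge 12.
\]

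The second, and main, step is an $\eps$-loss estimate at $p=3$:
\[
\Vert \mc{M}_{\ellipse} P_k f\Vert_{L^3([0,c]^2)}\lesssim_\eps 2^{k\eps}\Vert f\Vert_3.
\]
The general theory (Theorem \ref{ZahlThm}) only gives this at $p=d+2=4$. The improvement should come from the special structure of axis-parallel ellipses, as alluded to in the introduction via \cite{PYZ22} and \cite[Section 7]{Zah26}. Rescaling $z_1\mapsto z_1/a$ shows that for fixed $a$ the curves are translates and vertical dilates of a fixed arc, which is essentially a one-parameter cinematic family. So I would freeze $z_1$ (or $a$) and aim to bound a maximal function over a $d+1=3$-parameter family, where the $L^{d+1}=L^3$ estimate from \cite{PYZ22} applies. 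Concretely, I would first reduce to $\delta$-thickened curves at $\delta=2^{-k}$ via the dyadic argument of Section \ref{interpolationSubSection}. I would then pass to a Kakeya-type maximal estimate for $\delta$-neighbourhoods of a $3$-parameter family of curves satisfying the appropriate tangency/cinematic condition, and invoke the $L^3$ bound with $\delta^{-\eps}$ loss. This is where I expect the difficulty to lie: checking that the relevant family after fixing one parameter satisfies the hypotheses of the cited geometric estimate, including the transversality condition of \cite{Zah26}, and controlling the supremum over the frozen parameter. That supremum should cost only a further $2^{k\eps}$, since $P_kf$ is essentially constant at scale $2^{-k}$ and only $O(2^{k})$ discretized values need to be handled, which can be absorbed by a small further loss or by a Sobolev-in-one-variable argument.

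Finally, I would interpolate the $L^3$ estimate, with loss $2^{k\eps}$, against the $L^{12}$ estimate, with gain $2^{-(1/12-\eps)k}$. For each $3<p<\infty$ this yields $2^{-\eta k}$ decay for some $\eta=\eta(p)>0$, since for $p\ge 12$ the $L^\infty$ bound handles the interpolation trivially. Summing over $k\ge 1$ and bounding the low-frequency part by the Hardy–Littlewood maximal function gives the bound in Proposition \ref{230609theorem2_1}. As in the proof of Theorem \ref{maximalFnBound}, one applies this to $|f|$ to remove the absolute value inside the supremum.
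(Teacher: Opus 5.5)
Your first step fails, and this is the crux of the proposition. For $\gamma(t,z,a,b)=z_2-b\sqrt{1-(t/a)^2}$ (with the horizontal reparametrization), the $4\times4$ determinant in \eqref{soggeLocalMultiParam} equals $6b^2t/(a^2-t^2)^{9/2}$. It vanishes at $t=0$, the vertex of each ellipse. That point lies in the support of $\chi$, since the arc $\theta\in[\pi/4,3\pi/4]$ contains $\theta=\pi/2$. Shrinking $c$ cannot remove it, because $c$ only restricts $a$, $b$ and $z$. The cause is the reflection symmetry of each curve about the vertical line through its center. At the vertex, the slope and the third derivative vanish for all $(z_2,a,b)$, so the two corresponding rows of the Jacobian of the $3$-jet have entries only in the $z_1$ column and are proportional. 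Thus Corollary \ref{maximalFnBdsCor} does not apply. A split in $t$ at a fixed scale would also fail, since the piece near the vertex would have no decay in $k$ at any exponent. The missing idea is a $k$-dependent cutoff $\chi_{2^{-k\eta}}(t)$:
\begin{itemize}
\item The piece $|t|\lesssim 2^{-k\eta}$ has a trivial $L^\infty$ bound $2^{-k\eta}$. Interpolating this with the $L^3$ estimate (which loses $2^{k\eta}$) gives $2^{-k\eta/2}$ at $L^{12}$.
\item On the complementary piece the determinant is $\gtrsim 2^{-k\eta}$. Here you need the quantitative form of Theorem \ref{averagingOp} in Remark \ref{remark:260920_TY}, whose constant is $\kappa^{-C_d}$ for a lower bound $\kappa$ on the determinant. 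This gives $2^{-k/12+Ck\eta}$, and one takes $\eta$ small.
\end{itemize}

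The same degeneracy affects your $L^3$ step, which you leave unresolved. If you freeze $z_1$, the $3\times3$ determinant in $(z_2,a,b)$ required by Lemma \ref{lemma:PYZ_TY} (the estimate from \cite{PYZ22}) is $2bt^3/(a(a^2-t^2)^3)$. This is again zero at $t=0$, so freezing $z_1$ only handles $|t|\geq c$. For $|t|<c$ the paper instead freezes $z_2$ (after the shear $z_2\mapsto z_1+z_2$ and the substitution $x=z_1-t$) and uses $z_1$ as the one-dimensional variable. The determinant is then $-2$ at $z_1=t=0$, $a=b=1$, hence nonzero for small $c$. Your fallback of freezing $a$ does not work, because $a$ is a supremum variable. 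Discretizing the supremum into $O(2^k)$ values, or applying Sobolev embedding in $a$, costs about $2^{k/3}$ in $L^3$, not $2^{k\varepsilon}$. Freezing a spatial variable avoids this, since Fubini handles it. No transversality hypothesis from \cite{Zah26} is involved either. Your final interpolation and summation are fine once these two estimates are in place.
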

The remainder of this section is devoted to proving Proposition \ref{230609theorem2_1}. (For simplicity, we will drop the dependence on $\chi$ for the rest of this proof. Also, we will abbreviate $\sup_{a,b\in [1,1+c]}$ as $\sup_{a,b}$.)

\subsection{Interpolation}

Let $\delta>0$ be a small number. Define a $\delta$-thickened version of $\mc{M}_{\ellipse}$ by
\begin{equation}
    \mc{M}_{\ellipse, \delta}f(z):= 
    \sup_{a, b}
    \frac{1}{\delta}
    \left|\int_0^{\delta}\int_{\R} f\Big(z_1-t, 
    z_2-b\sqrt{
    1-\pnorm{
    \frac{t}{a}
    }^2
    }-y'
    \Big)  \chi(t; a, b)dt dy'\right|.
\end{equation}
To prove Proposition \ref{230609theorem2_1}, first, by interpolating with the trivial bound at $p=\infty$, it suffices to prove this for all $p\in (3,12]$. To this end, it suffices to prove the following estimates for arbitrarily small $\varepsilon>0$:
\begin{align}
    &\norm{
    \mc{M}_{\ellipse, \delta}f
    }_{L^3([0,c]^2)}
    \lesim_{\varepsilon} \delta^{-\varepsilon}
    \norm{f}_{L^3(\R^2)},\label{230609e2_6}\\
    &\norm{
    \mc{M}_{\ellipse}{P_k f}
    }_{L^{12}([0,c]^2)}
    \lesim 2^{-\beta k}
    \norm{f}_{L^{12}(\R^2)},\label{260918_TY}
\end{align}
for some fixed $\beta>0$, where $P_k f$ denotes a Littlewood-Paley frequency projection onto the annulus of diameter $2^k$. Indeed, taking $\delta=2^{-k}$, \eqref{230609e2_6} implies that
\begin{equation}\label{230609e2_6_TY}
    \norm{
    \mc{M}_{\ellipse}{P_kf}
    }_{L^3([0,c]^2)}
    \lesim_{\varepsilon} 2^{k\varepsilon}
    \norm{f}_{L^3(\R^2)},
\end{equation}
so by interpolating \eqref{260918_TY} with \eqref{230609e2_6_TY}, we obtain for all $p\in (3,12]$ that
\begin{equation}\label{20260918_TY2}
    \norm{
    \mc{M}_{\ellipse}{P_kf}
    }_{L^p([0,c]^2)}
    \lesim_{\varepsilon}2^{-k(\frac{4\beta}{3}(1-\frac{3}{p})-\varepsilon)}
    \norm{f}_{L^p(\R^2)}.
\end{equation}
Since $p>3$, choosing $\varepsilon>0$ small enough (depending on $p,\beta$) and summing over $k\ge 1$ gives Proposition \ref{230609theorem2_1}.

\subsection{Proof of $L^3$ endpoint estimate}\label{sub:proof_L3}

We now prove \eqref{230609e2_6}. Such bounds will essentially follow from \cite[Theorem 1.7]{PYZ22}, which we invoke as the following Lemma.

\begin{lemma}\label{lemma:PYZ_TY}
    Let $\delta>0$ be a small number. Let $J\subset\mathbb R$ be a fixed compact interval, and let $\chi(t;a,b)$ and $\phi(t;v,a,b)$ be compactly supported smooth functions such that
\begin{equation}\label{230609e2_17}
\det
\begin{bmatrix}
\partial_v\phi & \ \partial_a\phi & \ \partial_b \phi\\
\partial_v\partial_{t}\phi & \ \partial_a\partial_{t}\phi &  \ \partial_b\partial_{t} \phi\\
\partial_v\partial_{t}^2\phi & \ \partial_a\partial_{t}^2\phi & \ \partial_b\partial_{t}^2 \phi
\end{bmatrix}
\neq 0
\end{equation}
at every point $(t;v,a,b)$ such that $v\in J$ and
$(t;a,b)\in\operatorname{supp}\chi$.
For $v\in J$, define the maximal operator 
\begin{equation}
    \mathfrak{M}_{\delta}f(v):=\sup_{a, b}
    \frac{1}{\delta}
    \anorm{
    \int_{\R}\int_0^{\delta}
    f(t, \phi(t; v, a, b)-y')  \chi(t; a, b)dt dy'
    }.
\end{equation}
Then for all $\varepsilon,\delta\in (0, 1)$, 
\begin{equation}
\|\mathfrak M_\delta f\|_{L^3(J)}
\lesssim_{\varepsilon,\phi,\chi,J}
\delta^{-\varepsilon}\|f\|_{L^3(\mathbb R^2)}.
\end{equation}

\end{lemma}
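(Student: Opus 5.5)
The plan is to read Lemma \ref{lemma:PYZ_TY} as a restricted one-dimensional maximal function for a three-parameter family $(v,a,b)$ of cinematic curves, and to deduce it from the $L^3$ estimate of \cite[Theorem 1.7]{PYZ22}. In that setting there are $m=3$ curve parameters and $s=2$ of them, $(a,b)$, are absorbed into the supremum, so $s=m-1$. This is exactly the situation of inequality \eqref{oneDimSup} in the case $d=1$, where the exponent is $d+2=3$. As in the discussion following Theorem \ref{ZahlThm}, the transversality hypothesis in that framework is then vacuous.

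First I will check the hypotheses. The determinant condition \eqref{230609e2_17} is precisely the cinematic curvature condition \eqref{soggeLocal} for the defining function $\phi(t;v,a,b)$ in the three parameters $(v,a,b)$. Because $J$ and $\operatorname{supp}\chi$ are compact, the determinant is bounded below uniformly. By continuity the condition also holds on a small open neighbourhood of $J\times\operatorname{supp}\chi$.

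Next I will localize. Using a finite partition of unity in $(t,a,b)$ and a finite cover of $J$ by short intervals, I reduce to the case where $(v,a,b)$ ranges over a small cube and $t$ over a short interval. On such a piece, $\phi$ is a small perturbation of its jet at the centre, and the family $\{t\mapsto\phi(t;v,a,b)\}$ forms a family of cinematic curves in the sense of \cite{PYZ22,Zah26}. The $\delta$-thickened average $\frac1\delta\int_0^\delta f(t,\phi-y')\,dy'$ is an average of $f$ over the vertical $\delta$-neighbourhood of a curve, weighted by $\chi$. Bounding $|f|$ and $\chi$ trivially, this average is dominated by the average of $|f|$ over the $\delta$-tube around the curve. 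The operator $\mathfrak M_\delta$ is therefore dominated by the maximal function over $\delta$-tubes, where the supremum runs over the two parameters $(a,b)$ and the output is indexed by $v$. Theorem 1.7 of \cite{PYZ22}, or equivalently \cite[Theorem 1.5]{Zah26} with $s=2$ and $m=3$, gives the bound $\delta^{-\eps}\|f\|_{L^3}$ for this maximal function. Summing the finitely many pieces then yields the lemma.

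I expect the main obstacle to be matching the normalization of the cited theorem: its curves may be parametrized differently, or it may require a specific form such as graphs over a common interval with a nondegenerate jet map $(v,a,b)\mapsto(\phi,\partial_t\phi,\partial_t^2\phi)$. My first approach is to observe that cinematic curvature makes this jet map a local diffeomorphism at each fixed $t$. This lets me reparametrize the family locally by the jet at a base point $t_0$, a smooth change of variables that distorts $\delta$-tubes only by constant factors. I also need to confirm that the supremum over $(a,b)$ is taken over genuine tubes rather than averages with signed weights, which the reduction to $|f|$ above handles.
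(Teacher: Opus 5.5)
Your proposal is correct and follows the paper's route. The paper gives no separate argument and simply invokes \cite[Theorem 1.7]{PYZ22}, which is equivalently the $d=1$ case of \eqref{oneDimSup}, i.e.\ \cite[Theorem 1.5]{Zah26} with $s=2$, $m=3$ and vacuous transversality; your localization by a finite partition of unity, together with dominating the $\chi$-weighted average by the average of $|f|$ over $\delta$-tubes, is the routine bookkeeping implicit in that citation.
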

To apply Lemma \ref{lemma:PYZ_TY}, we break the operator $\mc{M}_{\ellipse, \delta}$ into two parts as follows
\begin{equation}\label{230609e2_10}
    \begin{split}
        & 
        \sup_{a, b}
    \frac{1}{\delta}
        \left|\int_0^{\delta}\int_{-c}^{c} f\Big(z_1-t, 
    z_2-b\sqrt{
    1-\pnorm{
    \frac{t}{a}
    }^2
    }-y'
    \Big)  \chi(t; a, b)dt dy'\right|\\
    & + 
    \sup_{a, b}
    \frac{1}{\delta}
    \left|\int_0^{\delta}\int_{|t|\ge c} f\Big(z_1-t, 
    z_2-b\sqrt{
    1-\pnorm{
    \frac{t}{a}
    }^2
    }-y'
    \Big)  \chi(t; a, b)dt dy'\right|.
    \end{split}
\end{equation}
When applying Lemma \ref{lemma:PYZ_TY} below, we take absolute values and
use fixed nonnegative smooth majorants of the truncated amplitudes, supported where $|a-1|,|b-1|<2c$. For the latter term in \eqref{230609e2_10}, choose the majorant supported
where $c/2<|t|<0.95$. For the former term, after the substitution $x=z_1-t$,
choose a majorant independent of $z_1\in[0,2c]$ and supported where $|x|<4c$. The determinant computations below ensure nondegeneracy on these supports when $c$ is sufficiently small.

To control the contribution from the latter term in \eqref{230609e2_10}, we freeze the $z_1$-variable and denote 
\begin{equation}
\phi(t; z_2, a, b):=z_2-b\sqrt{
    1-\pnorm{
    \frac{t}{a}
    }^2
    }.
\end{equation}
We directly check the curvature condition given in \eqref{230609e2_17} for the function $\phi(t;z_2, a, b)$, and see that the determinant is equal to 
\begin{equation}
\frac{2 b t^3}{a (a^2 - t^2)^3}.
\end{equation}
This computation explains the decomposition in \eqref{230609e2_10}. It therefore remains to control the contribution from the former term in \eqref{230609e2_10}. By the change of variables 
\begin{equation}
z_1\mapsto z_1, \ \ z_2\mapsto z_1+z_2,
\end{equation}
it suffices to prove that 
\begin{equation}
\Norm{
\sup_{a, b}
    \frac{1}{\delta}
    \Big|\int_0^{\delta}\int_{-c}^{c} f\Big(z_1-t, 
    z_1-b\sqrt{
    1-\pnorm{
    \frac{t}{a}
    }^2
    }-y'
    \Big)  \chi(t; a, b)dt dy'\Big|
}_{
L_{z_1}^3([0,2c])
}
\lesim_{\varepsilon} \delta^{-\varepsilon} \norm{f}_{L^3(\R^2)}.
\end{equation}
To apply Lemma \ref{lemma:PYZ_TY}, we consider 
\begin{equation}
\tilde \phi(t; z_1, a, b):=z_1-
b\sqrt{
1-\pnorm{
\frac{z_1-t}{a}
}^2
}.
\end{equation} 
We check \eqref{230609e2_17} at $z_1=t=0$, $a=b=1$ for $\tilde \phi$, which equals $-2$. Thus, by continuity, if the absolute constant $c$ is chosen to be small enough, then \eqref{230609e2_17} has absolute value at least $1$. This finishes the estimate for the first term in \eqref{230609e2_10}, and thus the $L^3$ endpoint estimate \eqref{230609e2_6}.

\subsection{Proof of local smoothing estimate}
We now prove \eqref{260918_TY}. Let $\chi_{2^{-k\eta}}: \R\to \R$ be an $L^{\infty}$ normalized smooth bump function adapted to the interval $(-2^{-k\eta}, 2^{-k\eta})$. We split $\mc{M}_{\ellipse}$ into two parts by 
\begin{equation}
\begin{split}
&  \sup_{a, b}
    \Big|\int_{\R} f\Big(z_1-t, 
    z_2-b\sqrt{
    1-\pnorm{
    \frac{t}{a}
    }^2
    }
    \Big) \chi_{2^{-k\eta}}(t) \chi(t; a, b)dt\Big|\\
    &+ 
     \sup_{a, b}
    \Big|\int_{\R} f\Big(z_1-t, 
    z_2-b\sqrt{
    1-\pnorm{
    \frac{t}{a}
    }^2
    }
    \Big) (1-\chi_{2^{-k\eta}}(t)) \chi(t; a, b)dt\Big|.
\end{split}
\end{equation}
Let us write them as $\mc{M}'_{\ellipse}$ and $\mc{M}''_{\ellipse}$ separately. For the former term, we have the trivial bound 
\begin{equation}
\norm{
\mc{M}'_{\ellipse} f
}_{L^{\infty}([0,c]^2)}\lesim 2^{-k\eta}\norm{f}_{L^{\infty}(\R^2)}. 
\end{equation}
The kernel bound for $P_k$ and \eqref{230609e2_6},
applied to positive averages, give
\begin{equation}
\norm{\mc{M}'_{\ellipse}P_kf}_{L^3([0,c]^2)}
\lesim_\eta 2^{k\eta}\norm{f}_{L^3(\R^2)}.
\end{equation}
Interpolating with the $L^\infty$ bound gives
\begin{equation}
\norm{\mc{M}'_{\ellipse}P_kf}_{L^{12}([0,c]^2)}
\lesim_\eta 2^{-k\eta/2}\norm{f}_{L^{12}(\R^2)}.
\end{equation}
To bound the latter term, we will apply Theorem \ref{averagingOp}. Via a direct computation, the determinant in \eqref{soggeLocalMultiParam} is equal to 
\begin{equation}
\frac{
6  b^2 t 
}{
(a^2 - t^2)^{9/2}
}.
\end{equation}
In absolute values, this is $\gtrsim 2^{- k\eta}$. We therefore apply Theorem \ref{averagingOp} (see Remark \ref{remark:260920_TY} below) and obtain 
\begin{equation}
\norm{
\mc{M}''_{\ellipse} P_k f
}_{L^{12}([0,c]^2)}\lesim 2^{
-\frac{k}{12}
+C  k\eta
}\norm{f}_{L^{12}(\R^2)},
\end{equation}
where $C$ is a large universal constant. Take $\eta:=1/(24C)>0$ and set
$\beta=\min\{\eta/2,\,1/12-C\eta\}>0$.
Combining the two bounds proves \eqref{260918_TY}.

\begin{remark}\label{remark:260920_TY}
    We actually need a slightly stronger form of Theorem \ref{averagingOp}. More precisely, suppose that the absolute value of the determinant in \eqref{soggeLocalMultiParam} is uniformly bounded below by some $\kappa>0$. Then the local smoothing estimate \eqref{260920_TY} can be strengthened to
    \begin{equation}
        \Vert A^\gamma_\chi P_k f\Vert_{L^p(\RR^2\times\RR^d)}\lesssim \kappa^{-C_d} 2^{-(\frac{d+1}{p}-\eps) k}\Vert f\Vert_{L^p(\RR^2)},
    \end{equation}
    where $C_d$ depends only on $d$, and the implicit constant can be taken to depend only on the (say) $C^{d+10}$ norm of $\gamma$ as well as $p,d,\varepsilon,\chi$. Indeed, this can be deduced from the proof of Theorem \ref{averagingOp} in Section \ref{maximalFnSection}; in particular, in the decoupling Lemma \ref{d-decoupling}, the implicit constant in \eqref{eqn:decoupling_TY} can be chosen to depend polynomially on the reciprocal of the infimum of the volume of the parallelepiped formed by the vectors in $D_w (\mathcal T(\Psi))$. See, for instance, the proof of \cite[Proposition 5.21]{LY24}, which can be adapted to the case of Theorem \ref{averagingOp}.
\end{remark}


\section{Example \ref{noNikodymUnboundedMaximal}}\label{exampleNoNikodymUnboundedMaximalSection}
We discuss Example \ref{noNikodymUnboundedMaximal} more carefully in this section. First, we will show that the associated maximal operator $M^\gamma$ is unbounded on $L^p(\RR^2)$ for all $p<\infty.$ We will verify this in two steps. The first step is to create a sequence of maximal operators $\{N_s\}_{s>0}$, and show that for $0<s\leq 1$ we have
\begin{equation}
\lim_{s\searrow 0} \Vert N_s\Vert_{L^p(\RR^2)\to L^p(\RR^2)}=\infty.
\end{equation}
The second step is to show that 
 \begin{equation}
 \Vert M^\gamma\Vert_{L^p(\RR^2)\to L^p(\RR^2)} \geq \Vert N_s\Vert_{L^p(\RR^2)\to L^p(\RR^2)}\quad\textrm{for each}\ s>0.
 \end{equation}

We begin with the first step. For $s>0$ and $z\in (0,1)^2$, define 
\[
N_s f(z)=\sup_{u\in[0,1]}\inf_{s_1\in(0,s]}\int_0^1 |f(t, z_2+u(t-z_1)+s_1u^2(t-z_1)^2)|dt.
\]
Define $N_sf(z)=0$ for $z\notin (0,1)^2$. We claim that for $p<\infty$ fixed, 
\begin{equation}
\lim_{s\searrow 0}\Vert N_s\Vert_{L^p(\RR^2)\to L^p(\RR^2)}=\infty.
\end{equation}
To verify this claim, fix $\eps>0$. Let $E\subset[0,1]^2$ be a classical Nikodym set, i.e.~for each $z\in[0,1]^2\backslash E$, there is a line $\beta_{z,u}$ passing through $z$ with slope $u$ so that 
 \begin{equation}
 E\cap \beta_{z,u} = [0,1]^2\cap\beta_{z,u}\backslash \{z\}.
 \end{equation}
  After applying a reflection and 90$^\circ$ rotation if needed, we can find a Borel set $F\subset[0,1]^2$ with $|F|>1/10$ so that for each $z\in F$, the corresponding line $\beta_{z,u}$ satisfies $\mathcal{H}^1(\beta_{z,u}\cap E)\geq 1/10$, and $u\in[0,1]$. Let $E^\circ$ be a bounded open set containing $E$, with $|E^\circ|\leq\eps$. For $\delta>0$, define 
  \begin{equation}
  E_\delta=\{z\in E^\circ\colon B(z,\delta)\subset E^\circ\}.
  \end{equation}
   Since $E^\circ$ is open, for each $z\in E^\circ$ there exists $\delta_0>0$ so that $z\in E_\delta$ for all $\delta\leq\delta_0$. Thus $E_\delta\nearrow E^\circ$ as $\delta\searrow 0$. In particular, if we select $\delta>0$ sufficiently small, then $|E_\delta|\geq |E^\circ|/2$ and after replacing $F$ by a slightly smaller set (say with $|F|\geq 1/20)$, we have $\mathcal{H}^1(\beta_{z,u}\cap E_\delta)\geq 1/20$ for each $z\in F$. Choose $f\in C_c^\infty(E^\circ)$ with $0\leq f\leq1$
and $f=1$ on $E_{\delta/2}$. Then
$\Vert f\Vert_{L^p(\RR^2)}\leq\eps^{1/p}$.
If $0<s\leq\delta/2$, then
$|s_1u^2(t-z_1)^2|\leq\delta/2$
for all $s_1\in(0,s]$ and $u,t,z_1\in[0,1]$.
Thus this perturbation sends points of $E_\delta$
into $E_{\delta/2}$, where $f=1$.
We conclude that if $0<s\leq\delta/2$ then
\[
N_sf(z) \geq \inf_{s_1\in (0, s]}\int_0^1 |f(t, z_2+u(t-z_1)+s_1u^2(t-z_1)^2)|dt\geq \frac{1}{40}\quad\textrm{for all}\ z\in F,
\]
and hence
\begin{equation}
\Vert N_sf\Vert_{L^p(\RR^2)}\geq \frac{1}{800} \geq \frac{1}{1600}\eps^{-1/p}\Vert f\Vert_{L^p(\RR^2)}.
\end{equation}
 This concludes Step 1.

For the second step, let $f\colon\RR^2\to[0,\infty)$ be smooth,
let $0<s\leq1$, and define $f_s(z_1,z_2)=f(z_1,z_2/s)$.
We have
 \[
 \Vert f_s\Vert_{L^p(\RR^2)}=s^{1/p}\Vert f\Vert_{L^p(\RR^2)},\quad\textrm{and}\quad M^\gamma f_s(z_1, sz_2)\geq N_s f(z_1,z_2)\quad\textrm{for}\ z\in(0,1)^2,
 \] 
 and thus
 \begin{equation}
 \frac{\Vert M^\gamma f_s\Vert_{L^p(\RR^2)}}{\Vert f_s\Vert_{L^p(\RR^2)}} \geq s^{1/p} \frac{\Vert N_s f\Vert_{L^p(\RR^2)}}{\Vert f_s\Vert_{L^p(\RR^2)}}\geq \frac{\Vert N_s f\Vert_{L^p(\RR^2)}}{\Vert f\Vert_{L^p(\RR^2)}}.
 \end{equation}
We conclude that $M^\gamma$ is unbounded on $L^p$.

Next, we will show that the family of curves $\Gamma$ associated to $\gamma$ from Example \ref{noNikodymUnboundedMaximal} does not admit Nikodym sets. For each $u_0\in[0,1]$, let $\gamma_{u_0}$ be the restriction of $\gamma$ to the domain $[0,1]\times[0,1]^2\times[u_0,1]$. Let $M^{\gamma_{u_0}}$ be the corresponding maximal function. It is straightforward to verify that for $0<u_0<1$, $\gamma_{u_0}$ satisfies the cinematic curvature condition \eqref{soggeLocal} on its domain (indeed, the determinant is $8u^3\geq 8u_0^3$), and hence  $M^{\gamma_{u_0}}$ is bounded on $L^p(\RR^2)$ for all $p>2$  (though the operator bound diverges to $\infty$ as $p\searrow 2$ and as $u_0\searrow 0$). Now let $E\subset\RR^2$ be Borel with $|E|=0$, and let $F$ be the set of points for which there exists $u\in[0,1]$ with $\mathcal{H}^1(\beta_{z,u}\cap E)>0$. Our goal is to show that $|F|=0$. 

Let $F'\subset F$ consist of those $z\in F$ for which we may take $u=0$, and for $n\geq 1$, let $F_{n}'$ consist of those $z\in F$ for which we may take $u\geq 1/n$. We have $F = F'\cup \bigcup_{n=1}^\infty F_{n}'.$ It is straightforward to verify that $|F'|=0$---this is precisely the ``boring'' situation described in Example \ref{dumbNikodymNonExample}. On the other hand, since the family of curves associated to the defining function $\gamma_{1/n}$ does not admit Nikodym sets, we must have that $|F'_n|=0$ for all $n$.


\normalem


\begin{thebibliography}{}



\bibitem[BHS20]{BHS20} Beltran D., Hickman J. and Sogge C. \emph{Variable coefficient Wolff-type inequalities and sharp local smoothing estimates for wave equations on manifolds.} Analysis $\&$ PDE, 2020, 13(2): 403--433.





\bibitem[Boc00]{Boc00} B\^ocher, M. \emph{The theory of linear dependence.} Ann. of Math. (2) 2 (1900/1901) 81–96.




\bibitem[BCR13]{BCR13} Bochnak, J., Coste, M. and Roy, M. \emph{Real Algebraic Geometry.}  Vol. 36. Springer Science \& Business Media, 2013.



\bibitem[BT23]{BT23} R.~Bongers, K.~Taylor. \emph{Transversal families of nonlinear projections and generalizations of Favard length.} Anal. PDE 16:279--308, 2023.


\bibitem[Bou86]{Bou86} Bourgain, J. \emph{Averages in the plane over convex curves and maximal operators.} J. Analyse Math. 47 (1986), 69--85.


\bibitem[BDG16]{BDG16} Bourgain, J., Demeter, C. and Guth, L. \emph{Proof of the main conjecture in Vinogradov's mean value theorem for degrees higher than three.} Annals of Mathematics (2016): 633--682.



\bibitem[CC19]{CC19} A.~Chang, M. Cs\"ornyei. \emph{The Kakeya needle problem and the existence of Besicovitch and Nikodym sets for rectifiable sets.} Proc. London Math. Soc. 118:1084--1114, 2019.


\bibitem[CFMMT26]{CFMMT} A. Chang, R. Fraser, A. McDonald, M. Meyer, and K. Taylor. \emph{Prescribed projections for a general class of maps and efficient covering by variable plane curves}.  arXiv:2609.31602v1 (2026+).


\bibitem[CMT23]{CMT23} A.~Chang, A.~McDonald, K.~Taylor. \emph{Prescribed projections and efficient coverings by curves in the plane.} To appear, Anal. PDE.    arXiv:2310.08776. 



\bibitem[CG24]{CG24} Chen, M. and Guo, S. \emph{The dichotomy of Nikodym sets and local smoothing estimates for wave equations.} arXiv preprint arXiv:2402.15476.

\bibitem[CGY23]{CGY23} Chen, M., Guo, S., and Yang, T. \emph{A multi-parameter cinematic curvature}. arXiv preprint   arXiv:2306.01606. 


\bibitem[Dav52]{Dav52} R.~O.~Davies. \emph{On accessibility of plane sets and differentiation of functions of two real variables.} Proc. Cambridge Philos. 
Soc., 48:215--232, 1952.


\bibitem[Dries09]{Dries09} L.~van den Dries. \emph{Tame Topology and O-minimal Structures.} Cambridge University Press, 2009.


\bibitem[DMM94]{DMM94} L.~van den Dries, A. Macintyre, D. Marker. The elementary theory of restricted analytic fields with exponentiation. \emph{Annals of Math.} 140 (1994), 183–205.


\bibitem[DM96]{DM96} L.~van den Dries and C.~Miller. Geometric categories and o-minimal structures. \emph{Duke Math. J.} 84 (1996), 497–540.



\bibitem[Erd03]{Erd03} \erdogan, B. \emph{Mapping properties of the elliptic maximal function.} Rev. Mat. Iberoamericana 19 (2003), 221–234.




\bibitem[Fal85]{Fal85} Falconer, K. \emph{The geometry of fractal sets.} Cambridge Tracts in Mathematics 85 (Cambridge University Press, 1985).
%

\bibitem[Fal86]{Fal86} K.~Falconer. \emph{Sets with prescribed projections and Nikodym sets.} Proc. London Math. Soc. 53(1):48--64, 1986.









\bibitem[LLO25]{LLO25} Lee, J., Lee, S. and Oh, S. \emph{The elliptic maximal function.} Journal of Functional Analysis 288, no. 1 (2025): 110693.

\bibitem[Lep76]{Lep76} Lepson, B. \emph{On a problem of Peter Fenton and the distance set of the Cantor set.} Notices Amer. Math. Soc. 23 (1976) A-507.



\bibitem[LY24]{LY24} Li, J. and Yang, T. \emph{Two principles of decoupling}. arXiv:2407.16108v3. (2024+).


\bibitem[MR98]{MR98} Marletta, G. and Ricci, F. \emph{Two-parameter maximal functions associated with homogeneous surfaces in  $\R^n$.} Studia Mathematica 130.1 (1998): 53--65.



\bibitem[Mar87]{Mar87} Marstrand, J. \emph{Packing circles  in the plane.} Proc. London Math. Soc. 55(1987), 37-58.






\bibitem[MSS92]{MSS92} Mockenhaupt, G., Seeger, A. and Sogge, C. \emph{Wave front sets, local smoothing and Bourgain's circular maximal theorem.} 
Ann. of Math. (2) 136 (1992), no. 1, 207--218.




\bibitem[MSS93]{MSS93} Mockenhaupt, G., Seeger, A. and Sogge, C. \emph{Local smoothing of Fourier integral operators and Carleson-Sjölin estimates.} Journal of the American Mathematical Society 6, no. 1 (1993): 65--130.


\bibitem[Nik27]{Nik27} Nikodym, O. \emph{Sur la mesure des ensembles plans dont tous les points sont rectilineairement accessibles.} Fund. Math., 10 (1927) 116–168.



\bibitem[PS07]{PS07} Pramanik, M. and Seeger, A. \emph{$L^p$ regularity of averages over curves and bounds for associated maximal operators. } Amer. J. Math. 129.1 (2007): 61--103.


\bibitem[PYZ22]{PYZ22} Pramanik, M., Yang, T. and Zahl, J. \emph{A Furstenberg-type problem for circles, and a Kaufman-type restricted projection theorem in $\R^3$.} arXiv:2207.02259, to appear in Amer. J. Math. 

\bibitem[Sch98]{Sch98} Schlag, W. \emph{A geometric proof of the circular maximal theorem.} Duke Math. J. 93 (1998),
no. 3, 505--533.



\bibitem[Sog91]{Sog91} Sogge, C. \emph{Propagation of singularities and maximal functions in the plane.} Inventiones mathematicae, volume 104, pages 349--376 (1991).



\bibitem[Wis04]{Wis04} Wisewell, L. Families of surfaces lying in a null set. \emph{Mathematika} 51 (2004), 155--162.

\bibitem[Zah26]{Zah26} Zahl J. \emph{On Maximal Functions Associated to Families of Curves in the Plane.} Duke Math. J. 175.2 (2026): 199--286.



\end{thebibliography}
\end{document}